\numberwithin{equation}{section}
\numberwithin{figure}{section}
\theoremstyle{plain}
\newtheorem{theorem}{Theorem}[section]
\newtheorem{lemma}[theorem]{Lemma}
\newtheorem{proposition}[theorem]{Proposition}
\newtheorem{corollary}[theorem]{Corollary}
\theoremstyle{plain}
\theoremstyle{remark}
\newtheorem{remark}[theorem]{Remark}
\begin{document}
\date{}

\title
[Eigenvalues and lattice points]{The Weyl formula for planar annuli}

\author{Jingwei Guo}
\address{School of Mathematical Sciences\\
University of Science and Technology of China\\
Hefei, 230026\\ P.R. China\\}
\email{jwguo@ustc.edu.cn}

\author{Wolfgang M\"uller}
\address{Institut of Statistics\\ Graz University of Technology\\ 8010 Graz\\ Austria}
\email{w.mueller@tugraz.at}

\author{Weiwei Wang}
\address{Yau Mathematical Sciences Center\\Tsinghua University\\Beijing\\100084, P. R. China\\
} \email{wawnwg123@163.com}

\author{Zuoqin Wang}
\address{School of Mathematical Sciences\\
University of Science and Technology of China\\
Hefei, 230026\\ P.R. China\\} \email{wangzuoq@ustc.edu.cn}

\thanks{J.G. is partially supported by the NSFC Grant (No. 11501535 and 11571331) and the Fundamental Research Funds for the
Central Universities No. WK3470000013. Z.W. is partially supported by the NSFC Grant (No. 11721101 and 11571331).
}

\subjclass[2010]{33C10, 35P20, 11P21, 35J05} 

\keywords{Cross-product of Bessel functions, Laplace eigenvalues, Weyl's law, lattice point problems, Huxley's bounds.}

\begin{abstract}
We study the zeros of cross-product of Bessel functions and obtain their approximations, based on which we reduce the eigenvalue counting problem for the Dirichlet Laplacian associated with a planar annulus to a lattice point counting problem associated with a special domain in $\mathbb{R}^2$. Unlike other lattice point problems, the one arisen naturally here has interesting features that lattice points under consideration are translated by various amount and the curvature of the boundary is unbounded. By transforming this problem into a relatively standard form and using classical van der Corput's bounds, we obtain a two-term Weyl formula for the eigenvalue counting function for the planar annulus with a remainder of size $O(\mu^{2/3})$. If we additionally assume that certain tangent has rational slope, we obtain an improved remainder estimate of the same strength as Huxley's bound in the Gauss circle problem, namely $O(\mu^{131/208}(\log \mu)^{18627/8320})$. As a by-product of our lattice point counting results, we readily obtain this Huxley-type remainder estimate in the two-term Weyl formula for planar disks.
\end{abstract}

\maketitle

\section{Introduction}

Let $D \subset \mathbb R^2$ be a bounded domain with piecewise smooth boundary, and let
\[0< \mu^2_1 < \mu^2_2 \le \mu_3^2 \le \cdots \]
be the eigenvalues (counting multiplicity) of the Dirichlet Laplacian associated with $D$. In his seminal work \cite{weyl11:1912}, H. Weyl initiated the study of the asymptotic behavior of the eigenvalue counting function
\begin{equation}
\mathscr{N}_D(\mu)=\#\left\{k\in\mathbb{N} : \mu_k\leq \mu \right\},\label{e-counting}
\end{equation}
and he proved that as $\mu \to \infty$,
\begin{equation*}
\mathscr{N}_D(\mu)=\frac{\mathrm{Area}(D)}{4\pi}\mu^2+o(\mu^2).
\end{equation*}
If we interpret $\mu_j$'s as the frequencies, i.e. the overtones that can be produced by a drum whose drumhead has the shape $D$, then the Weyl's law mentioned above implies that one can ``hear'' the area of $D$.
Weyl further conjectured that one can also ``hear'' the perimeter of $D$. More precisely, he conjectured in 1913 (see \cite{weyl13:1913}) that
\begin{equation}
\mathscr{N}_D(\mu)=\frac{\mathrm{Area}(D)}{4\pi}\mu^2-\frac{\mathrm{Length}(\partial D)}{4\pi}\mu+o(\mu). \label{WeylConj}
\end{equation}
Since then, the asymptotic behavior of the eigenvalue counting function has been studied extensively by many mathematicians in many different settings.
For example,  for closed manifolds  the Weyl's conjecture was first proven by J. Duistermaat and V. Guillemin \cite{DuG:1975} under an extra assumption that the set of periodic bicharacteristics has measure zero (which turns out to be necessary in this setting). Their result was later generalized by V. Ivrii \cite{Ivrii1980} (see also R. Melrose \cite{Mel:1980}) to manifolds with boundary under a similar assumption that the set of periodic billiard trajectories has measure zero.

While it is still unknown whether the conjecture is true for all bounded domains in $\mathbb R^2$ with piecewise smooth boundaries, it is known that many regions, including all bounded convex domains with analytic boundaries and all bounded domains with piecewise smooth concave boundaries, do satisfy the condition on the periodic billiard trajectories and thus the two-term Weyl's law \eqref{WeylConj}. In terms of such a region $D$, natural questions are: what other information is encoded in $\mathscr{N}_D(\mu)$ and is there a third main term in the asymptotics of  $\mathscr{N}_D(\mu)$? It turns out that the answers are no in general. In fact, V. Lazutkin and D. Terman \cite{Lazu:1982} showed that for any $\kappa <1$, there exists a convex planar domain $D$ satisfying \eqref{WeylConj} with an error term  of order at least $\mu^\kappa$. In other words, if one sets
\begin{equation}
\mathscr{R}_D(\mu)=\mathscr{N}_D(\mu) - \frac{\mathrm{Area}(D)}{4\pi}\mu^2+\frac{\mathrm{Length}(\partial D)}{4\pi}\mu,
\end{equation}
then, for any  $\kappa <1$, $\mathscr{R}_D(\mu) \ne O(\mu^\kappa)$ for some convex planar domain $D$.   Due to the complexity of the dynamics of the billiard flow, there is in general no hope to get a universal estimate of  $\mathscr{R}_D(\mu)$ better than $o(\mu)$.

On the other hand, there are many domains (usually with special symmetry) for which one can prove a much better estimate of $\mathscr{R}_D(\mu)$. Such examples include squares, disks, ellipses and, in principle, regions of separable variable type. Since the Laplacian eigenvalues  for them are closely related to lattice points,  a basic strategy is to convert the eigenvalue counting problem to a lattice point counting problem associated with some special planar region (modulo an error which needs to be controlled). For example, it is easy to see that the Laplacian eigenvalues of the planar unit square are in one-to-one correspondence with integer points in the first quadrant, and thus the eigenvalue counting problem is equivalent to the famous Gauss circle problem, which has received much attention for more than one hundred years while the conjectured error-term estimate $O(\mu^{1/2+\varepsilon})$ is still far from being proved. In principle the same type of arguments can be applied to  other domains whose billiard flows are completely integrable.

Recently the same idea was applied by Y. Colin de Verdi\`ere \cite{colin:2011} to get a nicer estimate of $\mathscr{R}_D(\mu)$ for planar disks. By studying the asymptotics of the Bessel function, Colin de Verdi\`ere converted the eigenvalue counting problem to a lattice point counting problem associated with a special planar domain with two cusp points. By using tools from analysis, he showed that both the error term in the lattice point problem and the error between the eigenvalue and lattice counting functions  are of order $O(\mu^{2/3})$. As a consequence, he proved $\mathscr{R}_{disk}(\mu)=O(\mu^{2/3})$. The same result was also obtained by N. Kuznetsov and  B. Fedosov \cite{kuz:1965}.

In \cite{GWW2018} three of the authors followed Colin de Verdi\`ere's strategy to study disks and observed that the error  between the eigenvalue and lattice counting functions is controlled by the error term in the corresponding lattice point problem. By applying the van der Corput's method of estimating exponential sums in the latter problem, we were able to improve Colin de Verdi\`ere's result a little bit and prove that  $\mathscr{R}_{disk}(\mu)=O(\mu^{2/3-1/495})$.

However, comparing to known results for squares, the exponent $2/3-1/495$ that we obtained for disks seems to be far from optimal. As we mentioned above, the eigenvalue counting problem for the unit square is equivalent to the Gauss circle problem, which is about counting lattice points in planar disks. So far the best published bound $O(\mu^{131/208}(\log \mu)^{18627/8320})$ is given by M. Huxley in \cite{Huxley:2003}.\footnote{In a recent preprint \cite{BWpreprint}, J. Bourgain and N. Watt was able to improve Huxley's bound in the circle problem to $O(\mu^{517/824+\varepsilon})$ by combining a newly emerging theory from harmonic analysis.}

This paper can be viewed as a continuation of \cite{colin:2011} and \cite{GWW2018} in two aspects: improving previous results for disks to a Huxley-type remainder estimate and extending from disks to annuli.

In the rest of this paper we let
\begin{equation*}
\mathscr{D}=\{x\in\mathbb{R}^2 : r\leq |x|\leq R \}
\end{equation*}
be the annulus centered at the origin with two given radii $0<r<R<\infty$. We obtain the following estimates of $\mathscr{R}_{\mathscr{D}}(\mu)$:
\begin{theorem}\label{specthm}
The following two-term Weyl formula for the annulus $\mathscr{D}$
\begin{equation*}
\mathscr{N}_\mathscr{D}(\mu)=\frac{R^2-r^2}{4}\mu^2-\frac{R+r}{2}\mu+O\left(\mu^{2/3}\right)
\end{equation*}
holds. Furthermore, if $\pi^{-1}\arccos(r/R)\in \mathbb{Q}$ then the remainder estimate can be improved to
\begin{equation*}
\mathscr{R}_{\mathscr{D}}(\mu)=O\left(\mu^{131/208}(\log \mu)^{18627/8320} \right).
\end{equation*}
\end{theorem}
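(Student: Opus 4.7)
By separating variables in polar coordinates, a Dirichlet eigenfunction on $\mathscr{D}$ has the form $f(\rho)e^{in\theta}$ with $n\in\mathbb{Z}$, and $\mu^2$ is an eigenvalue precisely when $\mu$ is a positive zero of the cross-product
\[
C_n(\mu;r,R):=J_n(\mu r)\,Y_n(\mu R)-J_n(\mu R)\,Y_n(\mu r).
\]
Writing $\mu_{n,k}$ for its $k$th positive zero and using the $\pm n$ multiplicity,
\[
\mathscr{N}_{\mathscr{D}}(\mu)=\#\{k\ge 1:\mu_{0,k}\le\mu\}+2\,\#\{(n,k):n\ge 1,\ k\ge 1,\ \mu_{n,k}\le\mu\}.
\]
Invoking the uniform approximations for $\mu_{n,k}$ established in the earlier sections of the paper, the condition $\mu_{n,k}\le\mu$ is replaced, with an aggregate error well inside the target remainder, by an inequality $k\le F_\mu(n)-\beta(n,\mu)$ where
\[
F_\mu(n)=\tfrac{1}{\pi}\int_{\max(r,\,n/\mu)}^{R}\sqrt{\mu^2-n^2/\rho^2}\,d\rho
\]
is the WKB-type integral and $\beta(n,\mu)$ encodes the Bessel phase corrections. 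The eigenvalue counting problem thus reduces to counting shifted integer points in the planar region under the graph $\Gamma_\mu$ of $F_\mu-\beta$ on $0\le n\le\mu R$.

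\textbf{Extraction of the main terms.} The region has the two features flagged in the abstract: $\Gamma_\mu$ is only $C^1$ at the turning point $n=\mu r$ (where the two regimes $n\le\mu r$ and $n>\mu r$ of $F_\mu$ meet with unbounded higher-order curvature) and at $n=\mu R$, and the shift $\beta(n,\mu)$ varies with $n$ rather than being a fixed translation. I isolate thin transitional strips around $n=\mu r$ and $n=\mu R$ whose contribution is estimated directly, and on the complementary range a change of variable absorbs $\beta(n,\mu)$ into the defining curve and a rescaling by $\mu$ produces a standard lattice point problem under a smooth, strictly convex arc with positive and bounded curvature. Applying Euler--Maclaurin and Poisson summation extracts the expected main terms $\tfrac{R^2-r^2}{4}\mu^2$ and $-\tfrac{R+r}{2}\mu$ (from the area of the region and the length of its boundary arc respectively), and reduces the estimation of $\mathscr{R}_{\mathscr{D}}(\mu)$ to that of a sum of exponential sums
\[
\sum_{m\ne 0}c_m\sum_{n}e^{2\pi i m F_\mu(n)}.
\]

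\textbf{Exponential sum estimates.} Van der Corput's classical second-derivative test, applicable because the rescaled arc has positive and bounded curvature, yields $\mathscr{R}_{\mathscr{D}}(\mu)=O(\mu^{2/3})$ and proves the first assertion. For the improved bound I appeal to the refined Bombieri--Iwaniec / discrete Hardy--Littlewood machinery of Huxley's 2003 paper. The hypothesis $\pi^{-1}\arccos(r/R)\in\mathbb{Q}$ intervenes through the identity $\int_r^R\sqrt{1-r^2/\rho^2}\,d\rho=\sqrt{R^2-r^2}-r\arccos(r/R)$: the turning-point height $F_\mu(\mu r)=\tfrac{\mu}{\pi}\bigl[\sqrt{R^2-r^2}-r\arccos(r/R)\bigr]$ then carries a specially simple arithmetic structure modulo one, keeping the transitional shifts compatible with the Farey dissection underlying Huxley's method and allowing the two branches of $\Gamma_\mu$ to be stitched across the turning point without loss. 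The machinery then delivers $\mathscr{R}_{\mathscr{D}}(\mu)=O(\mu^{131/208}(\log\mu)^{18627/8320})$.

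\textbf{Main obstacle.} The crux throughout is the transitional zone around the turning point $n=\mu r$: the Bessel asymptotics change regime there, the boundary of the lattice region loses smoothness, and the exponential sum techniques degrade precisely in that zone. Controlling this contribution uniformly forces the use of the sharp Bessel approximations of the earlier sections, and the rationality hypothesis $\pi^{-1}\arccos(r/R)\in\mathbb{Q}$ is precisely what is needed for Huxley's method to survive the turning point without incurring additional loss.
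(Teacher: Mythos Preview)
Your high-level strategy---Bessel asymptotics reduce eigenvalue counting to a lattice problem, van der Corput gives $O(\mu^{2/3})$, Huxley's machinery gives the refinement---matches the paper's. But there is a genuine gap in your account of where the rationality hypothesis enters, and the mechanism you describe does not work.

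You claim $\pi^{-1}\arccos(r/R)\in\mathbb{Q}$ matters because the turning-point \emph{height} $F_\mu(\mu r)=\tfrac{\mu}{\pi}\bigl[\sqrt{R^2-r^2}-r\arccos(r/R)\bigr]$ then has ``specially simple arithmetic structure modulo one.'' It does not: $\mu$ is a continuous parameter, so that height is equidistributed modulo $1$ regardless of the hypothesis, and nothing about Farey dissections is sensitive to it. What the hypothesis makes rational is the \emph{slope} of the tangent at the turning point $J=(r,G(r))$, namely $G'(r)=-\pi^{-1}\arccos(r/R)$. This is precisely what the paper exploits. Near $J$ the curvature is unbounded, and if one counts lattice points along axis-parallel lines the first derivative of the relevant inverse function blows up, violating the condition $|F'|\asymp 1$ in Huxley's Proposition~3. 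The paper's fix is to count along lines \emph{parallel to the tangent at $J$}; with rational slope $-a/q$ these are the lines $ax+qy=t$, $t\in\mathbb{Z}$, and the resulting inverse function $T$ (Lemma~4.5) has exactly the scaling $T^{(j)}(y)\asymp(\gamma-y)^{2/3-j}$ that, after setting $N=M^{2/3}\mu^{1/3}$, puts one inside Huxley's hypotheses. With an irrational slope no such lattice-compatible direction is available, and one falls back to van der Corput's second-derivative estimate (which needs no first-derivative control), yielding only $O(\mu^{2/3})$.

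A second, smaller gap: you write that ``a change of variable absorbs $\beta(n,\mu)$ into the defining curve.'' In the paper the shift $\tau_{n,k}$ takes three values $0$, $\psi(z_{n,k})$, $1/4$ according to which asymptotic regime $(n,k)$ falls in, and is not a smooth function of $n$ alone that can be absorbed into a curve. The paper instead proves (Proposition~3.2) that passing from the variable-shift count $\mathcal{N}_{\mathcal{D}}(\mu)$ to the uniform $1/4$-shift count $\mathcal{N}_{\mathcal{D}}^{u}(\mu)$ costs exactly $r\mu/2+2\mathcal{E}(\mu)+O(\mu^{1/3+\varepsilon})$, where $\mathcal{E}(\mu)$ is the lattice-point discrepancy in a band along the graph of $G$ on $[0,r\mu]$; this band term must then itself be estimated by the same Huxley/van der Corput argument (Corollary~4.3). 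That reduction is a real step, not a change of variable.
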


\begin{remark}
The number $\pi^{-1}\arccos(r/R)$ represents the slope of certain tangent line in the associated lattice point problem.
If it is rational then we can apply Huxley's bounds for rounding error sums from \cite{Huxley:2003} in the estimation of the number of lattice points. For details see Section \ref{LatticeSec}. At this moment we are still not sure  whether this assumption can be removed or not if one wants the same Huxley-type bound.
\end{remark}

\begin{remark}
Roughly speaking, as in \cite{colin:2011}, the proof of this theorem consists of two parts:  first,  a reduction from the eigenvalue counting problem to certain lattice counting problem; second, study of the latter problem.  However, both parts in the annulus case are more complicated than their counterparts in the disk case.

In the first part, we need to find approximations of zeros of cross-product of Bessel functions of the first and second type with errors under good control (see Corollary \ref{approximation}) while in the disk case we only need to study zeros of the Bessel function of the first kind. In the derivation we discuss in four cases depending on the sizes of zeros and use expansions of Bessel functions given by the method of stationary phase and F. Olver \cite{olver:1954}. As a by-product, we obtain estimates of distances between adjacent zeros (see Corollary \ref{cor1}). Based on the approximations, we get a correspondence between eigenvalues and lattice points (translated by various amount), via which the eigenvalue counting problem is reduced to a lattice counting problem naturally. For details see Section \ref{zeros} and \ref{reduction-sec}.

In the second part, in order to solve this new lattice point problem, we translate (if necessary) the lattice points to achieve a uniformity in translation. Then we are led to study two lattice point problems with unbounded curvature and (possibly) cusps. Some boundary points with infinite curvature have tangents with rational slope (for example, the points $P_2$ and  $P_2'$ in Figure \ref{SymmH}).  These points are relatively easier to handle. This phenomenon occurs in the disk case. What is different in the annulus case is that for some boundary points with infinite curvature we do not know whether the slopes of their tangents are rational or not (for example, the points $J$ and $J'$ in Figure \ref{SymmH}). These points bring us troubles in the estimation. With rational slopes we apply Huxley's \cite[Proposition 3]{Huxley:2003}. Concerning the irrational case, Huxley's proposition does not seem to be applicable. For details see Section \ref{reduction-sec} and \ref{LatticeSec}.
\end{remark}


As to disks, heuristically, since $\pi^{-1}\arccos(0)\in \mathbb{Q}$ letting $r \to 0$ in Theorem \ref{specthm} leads to the following Huxley-type remainder estimate of $\mathscr{R}_{disk}(\mu)$, which improves the main results in \cite{GWW2018}. Its rigorous proof relies on the reduction step from the eigenvalue counting to the lattice point counting (see \cite[Section 3]{colin:2011}, \cite[Section 6]{GWW2018} and its variant in Section \ref{reduction-sec}),  Theorem \ref{theorem:no-in-D} (together with the symmetry of the domain $\mathcal{D}$) and the fact that the corresponding domain for the lattice point counting (Figure 1.1 in \cite{colin:2011}) is invariant under the involution $(x,y)\rightarrow (-x, y+x)$ (see \cite[P.3]{colin:2011}).

\begin{theorem}\label{diskcase}
For planar disks we have
\begin{equation*}
\mathscr{R}_{disk}(\mu)=O\left(\mu^{131/208}(\log \mu)^{18627/8320} \right).
\end{equation*}
\end{theorem}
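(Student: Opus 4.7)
The plan is to mirror the strategy used for the annulus case in Theorem \ref{specthm}, specialized to the degenerate parameter $r = 0$, with the observation that the heuristic ``letting $r \to 0$'' in the statement of Theorem \ref{specthm} can be made rigorous via three already-established ingredients. First, I would invoke the reduction scheme originating in \cite{colin:2011}, refined in \cite{GWW2018}, and recast (for the annulus) in Section \ref{reduction-sec}: it converts $\mathscr{N}_{disk}(\mu)$ into a count of lattice points in the dilate $\mu\mathcal{D}$ of the planar domain $\mathcal{D}$ pictured in Figure 1.1 of \cite{colin:2011}, up to an error that is controlled by the error term of the very lattice point count. Consequently, it suffices to establish the Huxley-type bound for the lattice count in $\mu\mathcal{D}$.

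Second, I would apply Theorem \ref{theorem:no-in-D} to the domain $\mathcal{D}$. Here the rational-slope hypothesis plays the role that $\pi^{-1}\arccos(r/R) \in \mathbb{Q}$ plays for the annulus: at the cusp corresponding to $r=0$ the relevant quantity is $\pi^{-1}\arccos(0) = 1/2 \in \mathbb{Q}$, so Huxley's \cite[Proposition 3]{Huxley:2003} (which is the engine behind Theorem \ref{theorem:no-in-D} in the rational case) is available at the cusp analogous to $P_2$ in Figure \ref{SymmH}. To dispense with the other cusp, I would exploit the involution $(x,y) \mapsto (-x, y+x)$ of $\mathcal{D}$ recorded on p.~3 of \cite{colin:2011}: this involution interchanges the two cusps and preserves $\mathbb{Z}^2$, so a single application of the rational-slope branch of Theorem \ref{theorem:no-in-D}, combined with the symmetry, suffices to control both cusps and hence the whole lattice count.

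The main obstacle, as I see it, is checking that the reduction of Section \ref{reduction-sec}, which was designed for $0 < r < R$ and involves lattice points translated by several different amounts coming from the phases of cross-products of Bessel functions, degenerates correctly to the single-shift situation used for the disk in \cite{colin:2011} and \cite{GWW2018}. Concretely, one must verify that the approximations of Corollary \ref{approximation} collapse to the standard Olver asymptotics of zeros of $J_\nu$ as $r \to 0$, and that the piecewise description of $\mathcal{D}$ inherited from the annulus geometry agrees with Colin de Verdi\`ere's two-cusp region. Once this matching is in place, the Huxley-type remainder for $\mathscr{R}_{disk}(\mu)$ follows immediately from the lattice count for $\mu\mathcal{D}$, the symmetry argument above, and the fact that the reduction error is dominated by the lattice error.
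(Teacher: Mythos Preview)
Your proposal is correct and matches the paper's argument essentially point for point: both use the disk reduction from \cite{colin:2011} and \cite{GWW2018}, apply Theorem~\ref{theorem:no-in-D} (which is stated and proved for all $0\le r<R$, the case $r=0$ being dispatched explicitly midway through its proof), and invoke the involution $(x,y)\mapsto(-x,y+x)$ of Colin de Verdi\`ere's domain together with the symmetry of $\mathcal{D}$. Your ``main obstacle'' is superfluous, however: there is no need to verify that the annulus reduction of Section~\ref{reduction-sec} degenerates as $r\to 0$, since the disk reduction is already established independently in the cited papers and can simply be quoted.
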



\emph{Notations:} As in 3.6.15 \cite[P.15]{abram:1972} we write
\begin{equation*}
f(x)\sim \sum_{k=0}^{\infty} a_k x^{-k},
\end{equation*}
if
\begin{equation*}
f(x)-\sum_{k=0}^{n-1} a_k x^{-k}=O(x^{-n}) \quad \textrm{as $x\rightarrow \infty$}
\end{equation*}
for every $n=1, 2, \ldots.$

For functions $f$ and $g$ with $g$ taking nonnegative real values,
$f\ll g$ means $|f|\leqslant Cg$ for some constant $C$. If $f$
is nonnegative, $f\gg g$ means $g\ll f$. The notation
$f\asymp g$ means that $f\ll g$ and $g\ll f$. If we write a subscript (for instance $\ll_{\sigma}$), we emphasize that the implicit constant depends on that specific subscript.


\section{Zeros of cross-product of Bessel functions}\label{zeros}

There are a lot of literature on the study of the zeros of cross-product of Bessel functions. Just to mention a few, M. Kline~\cite{Kline:1948}, D. Willis~\cite{willis:1965}, J. Cochran~\cite{cochran:1964, Cochran:1966}, V. Bobkov~\cite{bobkov:2018}, etc. In this section we study such objects from our own perspectives (motivated by the work in \cite{colin:2011}), via whose study we look for a two-term Weyl formula for planar annuli.

Let $0<r<R<\infty$ be two given numbers. For any nonnegative integer $n$ we would like to study zeros of the function
\begin{equation}
f_n(x):=J_n(Rx)Y_n(rx)-J_n(rx)Y_n(Rx),\label{eigenequation}
\end{equation}
where $J_n$ and $Y_n$ are the Bessel functions of the first and second kind and order $n$ (see \cite[p. 360]{abram:1972}). It is well-known that all zeros are real and simple, and that $f_n$ is an even function. Hence we only study positive zeros. For each nonnegative $n$ we denote its sequence of positive zeros by $0<x_{n, 1}<x_{n, 2}<\cdots<x_{n, k}<\cdots$. In fact $x_{n,k}$ is strictly increasing in $n$ for each fixed $k\in \mathbb{N}$ (see \cite[P.425]{willis:1965}).

Throughout this paper we denote by $g$ the function
\begin{equation}
g(x)=\left(\sqrt{1-x^2}-x\arccos x\right)/\pi   \label{def-g}
\end{equation}
and by $G$ the function
\begin{equation*}
G(x)=\left\{
\begin{aligned}
&Rg(x/R)-rg(x/r)\;\; &\mathrm{for}&\;0\leq x\leq r,\\
&Rg(x/R)\;\; &\mathrm{for}&\;r\leq x\leq R.
\end{aligned}
\right.
\end{equation*}
In Figure \ref{H}, the solid curve $P_1JP_2$ represents the graph of $G$  on $[0, R]$, while the half-dashed and half-solid curve $P_{0}JP_{2}$  represents the graph of $Rg(x/R)$ on $[0, R]$.

\begin{figure} [ht]
\centering
\includegraphics[width=0.6\textwidth]{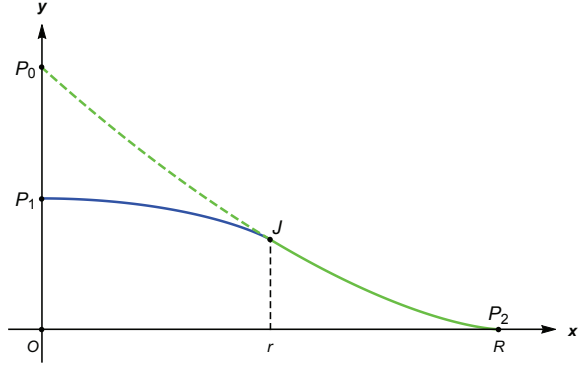}
\caption{The graph of $G$ with $R=2$ and $r=1$.}
\label{H}
\end{figure}


\begin{lemma}\label{case111}
For any $c>0$ and all $n\in \mathbb{N}\cup \{0\}$, if $rx\geq \max\{(1+c)n, 1\}$ then
\begin{equation}
f_n(x)=-\frac{2}{\pi}\frac{\sin\left( \pi x G\left(\frac{n}{x}\right)\right)+E_1(x)}{\left(\left(Rx\right)^2-n^2\right)^{1/4}
\left(\left(rx\right)^2-n^2\right)^{1/4}},          \label{case111-1}
\end{equation}
where
\begin{equation*}
E_1(x)=O_c\left(x^{-1}\right).
\end{equation*}
\end{lemma}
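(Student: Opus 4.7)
The plan is to substitute the Debye--Olver asymptotic expansions of $J_n$ and $Y_n$ into the definition of $f_n(x)$ and to collapse the resulting bilinear expression via a sine subtraction identity.

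First, I introduce the phase
\[
\omega(z,n):=\sqrt{z^{2}-n^{2}}-n\arccos(n/z)-\pi/4
\]
for $z\ge n\ge 0$. The identity $\pi z\,g(n/z)=\sqrt{z^{2}-n^{2}}-n\arccos(n/z)$, combined with the observation that the hypothesis $rx\ge(1+c)n$ forces $n/x\le r/(1+c)<r$ (so the first branch in the definition of $G$ applies), yields
\[
\omega(Rx,n)-\omega(rx,n)=\pi x\,G(n/x),
\]
which identifies the sine appearing on the right side of \eqref{case111-1}.

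Next, I would invoke the classical Debye expansions (see Olver \cite{olver:1954}): for $z\ge(1+c)n$ and $z\ge 1$,
\[
J_n(z)=\sqrt{\frac{2}{\pi\sqrt{z^{2}-n^{2}}}}\bigl[\cos\omega(z,n)+O_{c}(z^{-1})\bigr],\qquad Y_n(z)=\sqrt{\frac{2}{\pi\sqrt{z^{2}-n^{2}}}}\bigl[\sin\omega(z,n)+O_{c}(z^{-1})\bigr],
\]
where the hypothesis $z\ge(1+c)n$ guarantees $\sqrt{z^{2}-n^{2}}\asymp_{c}z$, so the standard $O(1/\sqrt{z^{2}-n^{2}})$ remainder becomes $O_{c}(z^{-1})$. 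Substituting $z=Rx$ and $z=rx$ into $f_n(x)=J_n(Rx)Y_n(rx)-J_n(rx)Y_n(Rx)$, expanding the product, and using the identity $\cos A\sin B-\cos B\sin A=-\sin(A-B)$, one finds that the main contribution collapses to $-\sin(\pi x\,G(n/x))$ divided by the common amplitude $(\pi/2)((Rx)^{2}-n^{2})^{1/4}((rx)^{2}-n^{2})^{1/4}$, while every cross term containing at least one $O_{c}(x^{-1})$ factor is itself $O_{c}(x^{-1})$. Absorbing these into $E_{1}(x)$ produces the claimed formula.

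The main point requiring care will be the uniformity of the Debye remainders across the full range of admissible $(n,x)$, including $n=0$ (where the expansion reduces to the classical $J_{0}/Y_{0}$ asymptotics at infinity, with $\omega(z,0)=z-\pi/4$) and the transitional case where $rx$ sits just above the threshold $(1+c)n$. Since Olver's remainder estimates depend only on a lower bound for $\sqrt{z^{2}-n^{2}}/z$, which the hypothesis $rx\ge(1+c)n$ makes uniform in $n$ and $x$ in terms of $c$ alone, this step amounts to careful bookkeeping rather than any genuine analytic obstacle.
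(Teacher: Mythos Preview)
Your proposal is correct and follows essentially the same route as the paper: substitute the asymptotics \eqref{jnasy}--\eqref{ynasy} (which the paper derives in its appendix via stationary phase, and which are exactly your Debye formulas with $\omega(z,n)=\pi z\,g(n/z)-\pi/4$) into the four Bessel factors and collapse the result with the sine difference identity. The only cosmetic difference is that you cite Olver for the input asymptotics whereas the paper proves them directly, but the argument for the lemma itself is identical.
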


\begin{proof}
This result follows from an application of the method of stationary phase to the Bessel functions. More precisely, we first apply to all four factors in \eqref{eigenequation} the asymptotics \eqref{jnasy} and \eqref{ynasy} of Bessel functions and then use the angle difference formula for the sine.
\end{proof}


\begin{lemma}\label{case222}
There exists a constant $c\in (0,1)$ such that for any $\varepsilon>0$ and all sufficiently large $n$ if $n+n^{1/3+\varepsilon}\leq rx\leq (1+c)n$ then
\begin{equation}
f_n(x)=-\frac{2}{\pi}\frac{\sin\left( \pi x G\left(\frac{n}{x}\right)\right)+E_2(x)}{\left(\left(Rx\right)^2-n^2\right)^{1/4}
\left(\left(rx\right)^2-n^2\right)^{1/4}},           \label{case222-1}
\end{equation}
where
\begin{equation}
E_2(x)=O\left(z^{-3/2}\right)          \label{case222-4}
\end{equation}
with $z$ determined by the equation $rx=n+z n^{1/3}$.
\end{lemma}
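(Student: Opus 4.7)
The plan is to parallel the proof of Lemma \ref{case111}, replacing only the stationary-phase expansions of the two Bessel factors $J_n(rx)$ and $Y_n(rx)$ by an expansion that remains valid as $rx$ approaches the turning point $n$. The main term $\sin(\pi x G(n/x))$ and its phase structure should be identical to those of Lemma \ref{case111}; only the error estimate degrades from $O(x^{-1})$ to $O(z^{-3/2})$.

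First, under the hypothesis $rx \leq (1+c)n$ one has
\[
Rx \;=\; (R/r)(rx) \;\geq\; (R/r)\,n,
\]
so $Rx \geq (1+c_0)n$ with $c_0 = R/r-1 > 0$ independent of $n$. Consequently $J_n(Rx)$ and $Y_n(Rx)$ fall into the regime where the stationary-phase asymptotics employed in the proof of Lemma \ref{case111} apply directly, producing two of the four trigonometric factors with error $O(n^{-1})$. The constant $c \in (0,1)$ will be chosen small enough that, in addition, the Airy-regime expansion below applies uniformly; since $c_0$ is fixed this imposes no conflict.

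For the remaining factors $J_n(rx)$ and $Y_n(rx)$, I would apply Olver's uniform asymptotic expansion from \cite{olver:1954}, in which each Bessel function is written as a combination of $\mathrm{Ai}(n^{2/3}\zeta)$ and $\mathrm{Ai}'(n^{2/3}\zeta)$ with smooth coefficients, where $\zeta = \zeta(rx/n)$ vanishes at the turning point $rx = n$. For $rx > n$ one has $\zeta < 0$, and Taylor expanding the defining relation
\[
\tfrac{2}{3}(-\zeta)^{3/2} \;=\; \sqrt{(rx/n)^2 - 1} \;-\; \arccos\bigl(n/(rx)\bigr)
\]
about $rx = n$ yields $n^{2/3}(-\zeta) \asymp z$. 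Hence under $z \geq n^{\varepsilon}$ the Airy arguments are large, and substituting the large-argument asymptotics of $\mathrm{Ai}(-w)$ and $\mathrm{Ai}'(-w)$ into Olver's representation converts it to the desired trigonometric form, with relative error of order $O(w^{-3/2}) = O(z^{-3/2})$.

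Applying the angle-difference formula to combine the four resulting trigonometric factors then yields $f_n(x)$ as a sine whose phase collapses to
\[
\pi\bigl(Rx\cdot g(n/(Rx)) \;-\; rx\cdot g(n/(rx))\bigr) \;=\; \pi x\, G(n/x),
\]
by \eqref{def-g} and the definition of $G$ on $[0,r]$ (applicable since $rx \geq n$), and the dominant error is $O(z^{-3/2})$, which absorbs the $O(n^{-1})$ contribution from the $Rx$ factors because $z \leq cn^{2/3}$. The main obstacle is the careful bookkeeping in the Airy-to-trigonometric conversion: one must verify that the full Olver coefficient series (including the higher Airy-coefficient $B_0(\zeta)$) contributes no error larger than $O(z^{-3/2})$ uniformly over the range $z \in [n^{\varepsilon}, cn^{2/3}]$, and the choice of $c$ sufficiently small is exactly what keeps $|\zeta|$ in the regime where Olver's uniform bounds on these coefficients hold.
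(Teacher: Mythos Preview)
Your proposal is correct and follows essentially the paper's route: Olver's uniform expansion for the factors near the turning point, large-argument Airy asymptotics to recover the trigonometric form with relative error $O(z^{-3/2})$, then the angle-difference formula, with $c$ chosen small so that $n^{2/3}(-\zeta)\asymp z$ via \eqref{bound-zeta+}. The only cosmetic difference is that the paper applies Olver's expansion to \emph{all four} Bessel factors rather than using the stationary-phase formulas \eqref{jnasy}--\eqref{ynasy} for $J_n(Rx)$ and $Y_n(Rx)$; since $Rx\geq (R/r)n$ forces $n^{2/3}(-\zeta_R)\asymp n^{2/3}$, both treatments of these two factors yield the same trigonometric form with error $O(n^{-1})$, which is then absorbed into $O(z^{-3/2})$ exactly as you note.
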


\begin{proof}
Denote
\begin{equation*}
Rx=n z_{R} \quad \textrm{and}\quad  rx=n z_{r}.
\end{equation*}
For sufficiently large $n$ we apply to all four factors in \eqref{eigenequation} Olver's asymptotic expansions of Bessel functions of large order (see \cite[p. 368]{abram:1972} or Olver's original paper \cite{olver:1954}; for the convenience of the readers we put those formulas in the appendix). The $\zeta_R=\zeta(z_R)$ and $\zeta_r=\zeta(z_r)$ appearing in the asymptotics are both negative and determined by \eqref{def-zeta1}. They satisfy the following size estimates:
\begin{equation*}
(-\zeta_R)^{3/2}\asymp 1
\end{equation*}
and
\begin{equation*}
n^{-1+3\varepsilon/2}\ll (-\zeta_r)^{3/2}\ll 1
\end{equation*}
whenever $n$ is sufficiently large. Indeed, the first estimate follows from $R/r<z_R\leq R(1+c)/r$ (with $c\in (0,1)$ to be determined below) while the second one follows from \eqref{bound-zeta+} and $1+n^{-2/3+\varepsilon}\leq z_r\leq 1+c$.

Then
\begin{equation}
\begin{split}
f_n(x)=&-2 n^{-2/3}(-\zeta_R)^{1/4}(-\zeta_r)^{1/4}\left(z_R^2-1\right)^{-1/4} \left(z_r^2-1\right)^{-1/4}\cdot \\
       &\left[\mathrm{Ai}(n^{2/3}\zeta_R)\mathrm{Bi}(n^{2/3}\zeta_r)-\mathrm{Ai}(n^{2/3}\zeta_r)\mathrm{Bi}(n^{2/3}\zeta_R)
+E\right]
\end{split} \label{case222-2}
\end{equation}
with the error $E$ being an expression involving the Airy functions of the first and second kind.

Since
\begin{equation*}
n^{2/3}(-\zeta_R)\asymp n^{2/3},  \quad\quad n^{\varepsilon}\ll n^{2/3}(-\zeta_r)\ll n^{2/3},
\end{equation*}
$\mathrm{Ai}(-r)$ and $\mathrm{Bi}(-r)$ are both of size $O(r^{-1/4})$ while $\mathrm{Ai}'(-r)$ and $\mathrm{Bi}'(-r)$ of size $O(r^{1/4})$ (see \cite[p. 448--449]{abram:1972}), by using the well-known asymptotics for the Airy functions (see the appendix) and the angle difference formula for the sine, the terms in brackets in \eqref{case222-2} become
\begin{equation*}
\bigg[\frac{(-\zeta_R)^{-1/4}(-\zeta_r)^{-1/4}}{\pi n^{1/3}}\left(\sin\left(\frac{2}{3}n(-\zeta_R)^{3/2}-
\frac{2}{3}n(-\zeta_r)^{3/2}\right)+E_2(x)\right)\bigg],
\end{equation*}
where
\begin{equation*}
E_2(x)=O\left(|n^{2/3}\zeta_r|^{-3/2}+n^{-1}\right).
\end{equation*}
By  \eqref{bound-zeta+}, if $c$ is a sufficiently small constant then $|n^{2/3}\zeta_r|\asymp z$. Noticing the definition of $G$ and $z\leq cn^{2/3}$, we then get \eqref{case222-1} and \eqref{case222-4}.
\end{proof}


\begin{lemma} \label{case2.5}
There exists a strictly decreasing real-valued $C^1$ function $\psi: \mathbb{R} \rightarrow (0, 1/4)$ such that $\psi(0)=1/12$,  $\lim_{x\rightarrow -\infty}\psi=1/4$, $\lim_{x\rightarrow \infty}\psi=0$, and the image of $\psi'$ is a bounded interval. For any $\varepsilon>0$ and all sufficiently large $n$ if $n-n^{1/3+\varepsilon}\leq rx \leq n+n^{1/3+\varepsilon}$ then
\begin{equation}
f_n(x)=-\frac{2^{5/6}}{\pi^{1/2}}\frac{\sin\left(\pi xG\left(\frac{n}{x}\right)+\pi \psi\left(z\right)\right)+E_3(x)}{n^{1/3}\left(\left(Rx\right)^2-n^2\right)^{1/4}
\left(\mathrm{Ai}^2+\mathrm{Bi}^2\right)^{-1/2}\left(-2^{1/3}z\right)},\label{case2.5-1}
\end{equation}
where $z$ is determined by the equation $rx=n+z n^{1/3}$ and
\begin{equation}
E_3(x)=O\left(n^{-2/3+2.5\varepsilon}\right). \label{case2.5-5}
\end{equation}
\end{lemma}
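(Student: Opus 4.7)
The plan is to apply Olver's uniform asymptotic expansions (given in the appendix) to all four Bessel factors in $f_n(x)=J_n(Rx)Y_n(rx)-J_n(rx)Y_n(Rx)$, but to treat the $Rx$-factors and the $rx$-factors differently. Since $z_R=Rx/n\ge R/r>1$ is bounded away from $1$, the argument $n^{2/3}\zeta_R$ is large negative and one may replace $\mathrm{Ai}(n^{2/3}\zeta_R),\mathrm{Bi}(n^{2/3}\zeta_R)$ by their standard oscillatory asymptotics; on the other hand $z_r=rx/n=1+zn^{-2/3}$ with $|z|\leq n^{\varepsilon}$ forces one to keep $\mathrm{Ai}(n^{2/3}\zeta_r),\mathrm{Bi}(n^{2/3}\zeta_r)$ explicit. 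Carrying out this hybrid approximation parallel to the proof of Lemma~\ref{case222}, and using Olver's local expansion $\zeta_r\sim -2^{1/3}(z_r-1)$ to obtain $s:=n^{2/3}\zeta_r=-2^{1/3}z+O(n^{-2/3+2\varepsilon})$ together with the limit $(4\zeta_r/(1-z_r^2))^{1/4}\to 2^{1/3}$, yields
\begin{equation*}
f_n(x)=-\frac{2^{5/6}}{\sqrt{\pi}}\cdot\frac{\sin\theta_R\,\mathrm{Bi}(s)-\cos\theta_R\,\mathrm{Ai}(s)+E'}{n^{1/3}((Rx)^2-n^2)^{1/4}},
\end{equation*}
where $\theta_R=\frac{2}{3}n(-\zeta_R)^{3/2}+\pi/4=\pi Rxg(n/(Rx))+\pi/4$.

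Next, introduce the Airy modulus $M(s)=\sqrt{\mathrm{Ai}^2(s)+\mathrm{Bi}^2(s)}$ and the continuous phase $\phi(s)$ defined by $(\cos\phi(s),\sin\phi(s))=(\mathrm{Bi}(s)/M(s),\mathrm{Ai}(s)/M(s))$. The Wronskian $\mathrm{Ai}\mathrm{Bi}'-\mathrm{Ai}'\mathrm{Bi}=1/\pi$ gives $\phi'(s)=-1/(\pi M^2(s))<0$, so $\phi$ is strictly decreasing; from the values of $\mathrm{Ai},\mathrm{Bi}$ at $0$ one obtains $\phi(0)=\arctan(1/\sqrt{3})=\pi/6$; and the standard asymptotics give $\phi(s)\to 0^+$ exponentially as $s\to+\infty$ and $\phi(s)=(2/3)(-s)^{3/2}+\pi/4+o(1)$ as $s\to-\infty$. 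The angle-difference formula converts the bracketed expression above into $M(s)\sin(\theta_R-\phi(s))$.

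To identify this with $\sin(\pi xG(n/x)+\pi\psi(z))$, split into the two cases defining $G$: when $rx\geq n$ one has $\theta_R-\pi xG(n/x)=\pi/4+\pi rxg(n/(rx))$, while when $rx\leq n$ simply $\theta_R-\pi xG(n/x)=\pi/4$. A Taylor expansion of $g$ at $1$ gives $\pi rxg(n/(rx))=(2\sqrt{2}/3)z^{3/2}+O(z^{5/2}n^{-2/3})$ for $z\geq 0$. Accordingly set
\begin{equation*}
\pi\psi(z)=\frac{\pi}{4}+h(z)-\phi(-2^{1/3}z),\qquad h(z)=\begin{cases}(2\sqrt{2}/3)z^{3/2},& z>0,\\ 0,& z\leq 0,\end{cases}
\end{equation*}
which depends only on $z$. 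Substituting $s=-2^{1/3}z+O(n^{-2/3+2\varepsilon})$ into $M,\phi$ (using $|\phi'(-2^{1/3}z)|\ll |z|^{1/2}$) and $\pi rxg(n/(rx))=h(z)+O(z^{5/2}n^{-2/3})$ into the phase introduces total error of order $n^{-2/3+5\varepsilon/2}$, giving \eqref{case2.5-1} with $E_3=O(n^{-2/3+2.5\varepsilon})$ and with prefactor $M(-2^{1/3}z)$ matching the $(\mathrm{Ai}^2+\mathrm{Bi}^2)^{1/2}(-2^{1/3}z)$ in the statement.

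Finally, the claimed properties of $\psi$ are verified directly from this formula. Continuity and $C^1$-smoothness at $0$ follow from $h(0)=h'(0^+)=0$. The values $\psi(0)=(\pi/4-\pi/6)/\pi=1/12$, $\psi(-\infty)=1/4$ (since $\phi(+\infty)=0$), and $\psi(+\infty)=0$ (by exact cancellation of the leading $(2\sqrt{2}/3)z^{3/2}$ in $h(z)$ with the leading $(2/3)(2^{1/3}z)^{3/2}$ in $\phi(-2^{1/3}z)$) are immediate. The main obstacle is \emph{strict} monotonicity on $(0,\infty)$: there $\pi\psi'(z)=\sqrt{2}z^{1/2}+2^{1/3}\phi'(-2^{1/3}z)$ is a near-cancellation whose sign is decided by the next-order term. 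Using the ODE $2PP''-(P')^2=4/\pi^2+4xP^2$ for $P=\mathrm{Ai}^2+\mathrm{Bi}^2$, one extracts $M^2(-t)=(\pi t^{1/2})^{-1}(1-(5/32)t^{-3}+O(t^{-6}))$, whence $\pi\psi'(z)=-(5\sqrt{2}/64)z^{-5/2}+O(z^{-11/2})<0$ as $z\to+\infty$; combined with $\pi\psi'(0)=2^{1/3}\phi'(0)<0$ and continuity this gives $\psi'<0$ throughout $\mathbb{R}$ and also shows that the image of $\psi'$ is bounded.
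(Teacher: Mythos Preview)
Your derivation of the asymptotic formula \eqref{case2.5-1} is essentially the same as the paper's: you use Olver's expansion for all four factors (replacing the Airy functions by their oscillatory asymptotics on the $Rx$-side), introduce the Airy modulus--phase pair, and define $\psi$ accordingly. Up to the relabeling $\phi(-2^{1/3}z)/\pi = 1/2 - \beta(z)$, your $\psi$ coincides with the paper's, and the error bookkeeping leading to $E_3=O(n^{-2/3+2.5\varepsilon})$ is sound.

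There is, however, a genuine gap in your verification that $\psi$ is strictly decreasing on $(0,\infty)$. You correctly observe that the question reduces to the sign of
\[
\pi\psi'(z)=\sqrt{2}\,z^{1/2}-\frac{2^{1/3}}{\pi M^2(-2^{1/3}z)},
\]
and you establish $\psi'(0)<0$ and $\psi'(z)<0$ for all sufficiently large $z$ via the asymptotic $\pi\psi'(z)=-(5\sqrt{2}/64)z^{-5/2}+O(z^{-11/2})$. But continuity alone does \emph{not} bridge the gap: a continuous function negative at $0$ and near $+\infty$ may well be positive on an intermediate interval. Your asymptotic expansion only controls the sign outside some unspecified threshold, so the argument as written does not cover all $z>0$.

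The paper closes this gap by a different route. After the same change of variable, the inequality $\psi'(z)<0$ is equivalent to
\[
\pi t^{1/2}\bigl(\mathrm{Ai}^2+\mathrm{Bi}^2\bigr)(-t)<1\qquad\text{for all }t>0.
\]
This is proved by invoking the classical fact that $t\mapsto \pi t^{1/2}(\mathrm{Ai}^2+\mathrm{Bi}^2)(-t)$ is \emph{increasing} on $(0,\infty)$ (see Olver, \S 7.3 p.~342 and \S 2.4 p.~397, or Watson \S 13.74), combined with the limit $1$ from below as $t\to\infty$ (which follows from \eqref{case2.5-4}). Monotonicity plus the limiting value gives the strict inequality for every $t>0$, not just asymptotically. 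You should either cite this monotonicity result or supply an independent proof; the ODE for $P=\mathrm{Ai}^2+\mathrm{Bi}^2$ that you already wrote down can be used to derive it, but the asymptotic expansion by itself is not enough.
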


\begin{proof}
Notice that if $rx\geq n-n^{1/3+\varepsilon}$ then
\begin{equation*}
Rx\geq \frac{R}{r}\left(n-n^{1/3+\varepsilon}\right)>\left(1+c'\right)n
\end{equation*}
for some fixed constant $c'>0$ whenever $n$ is sufficiently large. Denote
\begin{equation*}
rx=n+zn^{1/3} \quad  \textrm{with $-n^\varepsilon\leq z\leq n^\varepsilon$}.
\end{equation*}
Applying \eqref{jnasy} and \eqref{ynasy} to $J_n(Rx)$ and $Y_n(Rx)$ respectively and
Lemma \ref{9.3.4analogue} to both $J_n(rx)$ and $Y_n(rx)$ yields
\begin{align}
f_n(x)=&-2^{5/6}\pi^{-1/2}\left(\left(Rx\right)^2-n^2\right)^{-1/4}n^{-1/3}\sqrt{\mathrm{Ai}^2+\mathrm{Bi}^2}(-2^{1/3}z) \cdot \nonumber \\
       &\bigg[\sin\left( \pi x R g\left(\frac{n/x}{R}\right)-\frac{\pi}{4}\right)
        \frac{\mathrm{Ai}}{\sqrt{\mathrm{Ai}^2+\mathrm{Bi}^2}}\left(-2^{1/3}z\right)+ \label{equ1}\\
       & \ \cos\left( \pi x R g\left(\frac{n/x}{R}\right)-\frac{\pi}{4}\right)
       \frac{\mathrm{Bi}}{\sqrt{\mathrm{Ai}^2+\mathrm{Bi}^2}}\left(-2^{1/3}z\right)+E(x)\bigg], \label{equ2}
\end{align}
where
\begin{equation*}
E(x)=O\left(n^{-2/3+2.5\varepsilon}\right).
\end{equation*}
To get the bound of $E$, we used the fact that for large $r$
\begin{equation}
\mathrm{Ai}^2(-r)+\mathrm{Bi}^2(-r) \sim \pi^{-1} r^{-1/2}  \label{case2.5-4}
\end{equation}
(see  \S 2.2 in \cite[p. 395]{olver:1997}) and noticed that $x^{-1}\asymp n^{-1}$.

In order to use the angle sum formula for the sine to simplify \eqref{equ1} and \eqref{equ2}  we define an angle function $\beta:\mathbb{R}\rightarrow (-\infty, 1/2)$ as follows
\begin{equation*}
\beta(z)=\left\{
\begin{array}{ll}
-(m-1)+\frac{1}{\pi}\arctan \frac{\mathrm{Bi}(-2^{1/3}z)}{\mathrm{Ai}(-2^{1/3}z)},  & \textrm{$z\in (\frac{t_{m-1}}{2^{1/3}}, \frac{t_m}{2^{1/3}})$, $m\in\mathbb{N}$,}\\
-(m-1)-\frac{1}{2},  & \textrm{$z=\frac{t_m}{2^{1/3}}$, $m\in\mathbb{N}$,}
\end{array}\right.
\end{equation*}
where $t_0=-\infty$ and $t_m$ ($m\in\mathbb{N}$) is the $m$th zero of the equation $\textrm{Ai}(-x)=0$. Then the terms in brackets in \eqref{equ1} and \eqref{equ2} become
\begin{equation}
\left[\sin\left( \pi x R g\left(\frac{n/x}{R}\right)+\pi \beta(z)-\frac{1}{4}\pi\right)+E(x)\right].\label{case2.5-3}
\end{equation}
Notice that if $z\geq 0$ then \eqref{bound-zeta-} implies
\begin{equation*}
rx g\left(\frac{n}{rx}\right)=\frac{2\sqrt{2}}{3\pi}z^{3/2}+O\left(z^{2.5}n^{-2/3}\right).
\end{equation*}
Define a real-valued function $\psi=\psi(z)$ by
\begin{equation*}
\psi(z)=\left\{ \begin{array}{ll}
\beta(z)+\frac{2\sqrt{2}}{3\pi}z^{3/2}-\frac{1}{4},  & \textrm{$z\geq 0$},\\
\beta(z)-\frac{1}{4},        & \textrm{$z\leq 0$}.
\end{array}\right.
\end{equation*}
By rewriting \eqref{case2.5-3} with this $\psi$ and the function $G$, we get \eqref{case2.5-1} and \eqref{case2.5-5}.

One can easily check, after checking that $\psi'$ is always negative, that $\psi$ does satisfy those properties claimed in the statement of the lemma. For non-positive $z$, $\psi'<0$ follows trivially from the formula
\begin{equation*}
\beta'(z)=-\frac{2^{1/3}/\pi^2}{\left(\mathrm{Ai}^2+\mathrm{Bi}^2\right)(-2^{1/3}z)},
\end{equation*}
in whose calculation we have used the 10.4.10 in \cite[p. 446]{abram:1972}. To prove the inequality for positive $z$, it suffices to show that
\begin{equation*}
\pi z^{1/2} \left(\mathrm{Ai}^2+\mathrm{Bi}^2 \right)(-z)<1 \quad \textrm{for all $z>0$}.
\end{equation*}
This follows from the fact that the left hand side is an increasing function of $z$ (see \S 2.4 in \cite[p. 397]{olver:1997} and \S 7.3 in \cite[p. 342]{olver:1997} or \S 13.74 \cite[P.446]{watson:1966}) and \eqref{case2.5-4}. Since $\psi'(z)\rightarrow 0$ as $|z|\rightarrow \infty$, its image should be a bounded interval.
\end{proof}

\begin{remark}
It follows from the 10.4.78 in \cite[p. 449]{abram:1972} that for large $z$
\begin{equation*}
\psi'(z)=-\frac{5\sqrt{2}}{64\pi}z^{-5/2}+O\left(z^{-11/2}\right).
\end{equation*}
\end{remark}


\begin{lemma} \label{case0}
For all $n\in \mathbb{N}\cup \{0\}$, at any positive zero of $f_n(x)$,
\begin{equation*}
Rx_{n,k}>\sqrt{n^2+\pi^2\left(k-\frac{1}{4} \right)^2}.
\end{equation*}
In particular, $Rx_{n, k}>n$.
\end{lemma}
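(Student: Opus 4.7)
The plan is to read $x=x_{n,k}$ spectrally and apply a Sturm--Liouville comparison. Set
$$u(s)=\sqrt{s}\,\bigl[J_n(s)Y_n(rx)-Y_n(s)J_n(rx)\bigr],$$
which satisfies the Liouville-form Bessel equation
$$u''(s)+\Bigl(1-\frac{n^2-1/4}{s^2}\Bigr)u(s)=0,$$
vanishes at $s=rx$ (automatically) and at $s=Rx$ (by $f_n(x)=0$), and has exactly $k-1$ interior zeros in $(rx,Rx)$, because $x^2$ is the $k$-th Dirichlet eigenvalue of the associated radial Sturm--Liouville problem. Thus $u$ has $k+1$ zeros in $[rx,Rx]$ in total.

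For $n\geq 1$ the potential $1-(n^2-1/4)/s^2$ is majorized on $[rx,Rx]$ by the constant $\omega^2:=1-(n^2-1/4)/(Rx)^2$; note that $\omega^2>0$ automatically, since otherwise the potential would be nonpositive throughout $[rx,Rx]$ and $u$ could not have two zeros at the endpoints while keeping constant sign in between. Applying Sturm's comparison theorem to $u$ and $U(s)=\sin(\omega(s-rx))$, which solves $U''+\omega^2U=0$ with $U(rx)=0$, each of the $k$ subintervals cut out by consecutive zeros of $u$ must contain a zero of $U$ in its open interior. Together with $U(rx)=0$, this yields at least $k+1$ zeros of $U$ in $[rx,Rx]$, forcing $rx+k\pi/\omega\leq Rx$, i.e.\ $\omega(R-r)x\geq k\pi$. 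Squaring and substituting the definition of $\omega^2$ rearranges to
$$R^2x^2\geq\frac{R^2k^2\pi^2}{(R-r)^2}+n^2-\tfrac14.$$
Since $R/(R-r)>1$ (because $r>0$), and the elementary identity $k^2-(k-1/4)^2=k/2-1/16\geq 7/16>1/(4\pi^2)$ holds for all $k\geq 1$, one obtains $R^2x^2>k^2\pi^2+n^2-\tfrac14>n^2+\pi^2(k-1/4)^2$, as required.

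The case $n=0$ must be handled separately because the Sturm inequality now runs in the wrong direction. I would argue by strict zero-interlacing: $u$ and $J_0$ satisfy the same Bessel equation, so their zeros alternate strictly unless $u\propto J_0$---which happens precisely when $rx$ is itself a zero of $J_0$, and is handled directly by observing that then $u=Y_0(rx)J_0$. In the generic case, the $k$-th zero of $u$ beyond $rx$, namely $Rx$, strictly exceeds $j_{0,k}$; combined with the classical bound $j_{0,k}>(k-1/4)\pi$ (derivable from the Liouville-Green transformation of $\sqrt{s}J_0(s)$ with a turning-point phase correction, or found in Watson's \emph{Theory of Bessel Functions}, \S15) this delivers $Rx_{0,k}>(k-1/4)\pi=\sqrt{0+\pi^2(k-1/4)^2}$. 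The principal technical obstacle is precisely this $n=0$ case: the Sturm comparison that works so cleanly for $n\geq 1$ cannot be inverted to produce a lower bound of the desired strength, and one must fall back on interlacing plus a classical lower bound for $j_{0,k}$. Everything in the $n\geq 1$ branch is completely elementary, requiring only Sturm's theorem and a small algebraic identity.
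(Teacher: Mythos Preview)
Your proof is correct, but it follows a quite different route from the paper's.

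The paper's argument is a three-line reduction to known facts: it observes that for fixed $R,n,k$ the zero $x_{n,k}$ is monotone increasing in $r$ and tends to $j_{n,k}/R$ as $r\to 0$ (citing Cochran and Kline), whence $Rx_{n,k}\ge j_{n,k}$; then it invokes McCann's inequality $j_{n,k}>\sqrt{n^2+\pi^2(k-1/4)^2}$. Your argument bypasses the monotonicity-and-limit step entirely: for $n\ge 1$ you majorize the Liouville-form potential by its value at the right endpoint and run a clean Sturm comparison against a sine, obtaining in fact the sharper intermediate bound $R^2x^2\ge R^2k^2\pi^2/(R-r)^2+n^2-1/4$ before discarding the factor $R^2/(R-r)^2>1$. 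This branch is fully self-contained and uses nothing beyond Sturm's theorem and a one-line algebraic check. For $n=0$ the comparison runs the wrong way (the potential now \emph{exceeds} $1$), and you correctly fall back on Sturm separation with $\sqrt{s}J_0(s)$ to obtain $Rx_{0,k}>j_{0,k}$; but then the closing inequality $j_{0,k}>(k-1/4)\pi$ is exactly the $n=0$ instance of McCann's bound that the paper cites, so at this one point the two proofs ultimately rest on the same external fact.

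In short: the paper trades a spectral argument for two citations; your approach is more hands-on and for $n\ge1$ yields a sharper annulus-dependent bound along the way, at the cost of having to split off and treat $n=0$ separately.
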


\begin{proof}
Let $j_{n,k}$ denote the $k$th positive zero of $J_n$. Then
\begin{equation*}
x_{n, k}\geq j_{n,k}/R
\end{equation*}
since for fixed $R$, $n$, and $k$, the zero $x_{n,k}$ (as a function of $r$) is increasing in $r$  (by a similar argument as in the proof of Theorem 2 on \cite[P.222]{Cochran:1966}) and converges to $j_{n,k}/R$ as $r\to 0$ (see \cite[P.38]{Kline:1948}). R. McCann \cite[P.102]{McCann:1977} gives
\begin{equation*}
j_{n,k}>\sqrt{n^2+\pi^2\left(k-\frac{1}{4} \right)^2},
\end{equation*}
hence the desired bound.
\end{proof}


\begin{lemma}\label{case3}
For any $\varepsilon>0$ and all sufficiently large $n$ if $rn/R<rx\leq n-n^{1/3+\varepsilon}$ then
\begin{equation}
f_n(x)=\frac{Y_n(rx)\left(12\pi x G\left(\frac{n}{x} \right)\right)^{1/6}}{\left(\left(Rx\right)^2-n^2\right)^{1/4}}
\left(\mathrm{Ai}\left(-\left(\frac{3\pi}{2} x G\left(\frac{n}{x} \right)\right)^{2/3}\right)+E_4(x)\right),\label{case3-1}
\end{equation}
where $Y_n(rx)<0$ and
\begin{equation}
E_4(x)=O\left(n^{-4/3}\max\left\{1, \left(x G\left(\frac{n}{x} \right)\right)^{1/6}\right\}\right).\label{case3-3}
\end{equation}
If we further assume that $x G(n/x)>1$, then
\begin{equation}
f_n(x)=\sqrt{\frac{2}{\pi}} \frac{Y_n(rx)}{\left(\left(Rx\right)^2-n^2\right)^{1/4}}
\left(\sin\left( \pi x G\left(\frac{n}{x}\right)+\frac{\pi}{4}\right)+E_5(x)\right),\label{case3-2}
\end{equation}
where
\begin{equation}
E_5(x)=O\left(\left(x G\left(\frac{n}{x}\right)\right)^{-1} \right). \label{case3-5}
\end{equation}
\end{lemma}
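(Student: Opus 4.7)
In the range $rn/R < rx \le n - n^{1/3+\varepsilon}$ one has $Rx > n$ (since $x > n/R$), while $rx$ sits strictly below $n$ by at least $n^{1/3+\varepsilon}$. Accordingly $J_n(Rx),Y_n(Rx)$ lie in Olver's oscillatory regime ($\zeta_R<0$), whereas $J_n(rx),Y_n(rx)$ lie in the monotone regime ($\zeta_r>0$). My strategy is to apply Olver's uniform Airy-function expansion from \cite{olver:1954} to each of the four Bessel factors, factor $Y_n(rx)$ out of $f_n(x)$, and discard the cross term $J_n(rx)Y_n(Rx)/Y_n(rx)$, which will turn out to be super-polynomially small in $n$; what survives is exactly the Olver expansion of $J_n(Rx)$ multiplied by $Y_n(rx)$, and this matches the right-hand side of \eqref{case3-1}.

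Applied to $J_n(Rx)$, Olver yields
\begin{equation*}
J_n(Rx) = \left(\frac{4(-\zeta_R)}{z_R^2-1}\right)^{1/4}\left[\frac{\mathrm{Ai}(n^{2/3}\zeta_R)}{n^{1/3}} + \frac{\mathrm{Ai}'(n^{2/3}\zeta_R) B_0(\zeta_R)}{n^{5/3}} + O(n^{-3})\right],
\end{equation*}
with $z_R = Rx/n$. Using the defining relation $\tfrac{2}{3}(-\zeta_R)^{3/2} = \sqrt{z_R^2-1} - \arccos(1/z_R)$ together with the identity $G(n/x) = Rg(n/(Rx))$ (valid because $r \le n/x < R$ here), a short computation produces
\begin{equation*}
n^{2/3}(-\zeta_R) = \left(\tfrac{3\pi}{2}xG(n/x)\right)^{2/3}, \qquad n^{-1/3}\left(\frac{4(-\zeta_R)}{z_R^2-1}\right)^{1/4} = \frac{(12\pi xG(n/x))^{1/6}}{((Rx)^2-n^2)^{1/4}},
\end{equation*}
matching the prefactor in \eqref{case3-1}. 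In the $\zeta_r>0$ regime Olver's expansions for $J_n(rx)$ and $Y_n(rx)$ are proportional respectively to $\mathrm{Ai}(n^{2/3}\zeta_r)$ and $-\mathrm{Bi}(n^{2/3}\zeta_r)$; the hypothesis $1-z_r \ge n^{-2/3+\varepsilon}$ forces $\zeta_r \gg n^{-2/3+\varepsilon}$, so $\tfrac{2}{3}(n^{2/3}\zeta_r)^{3/2} \gg n^{3\varepsilon/2}$ and consequently $|J_n(rx)/Y_n(rx)| \ll \exp(-cn^{3\varepsilon/2})$. Writing $f_n = Y_n(rx)\bigl[J_n(Rx)-\bigl(J_n(rx)/Y_n(rx)\bigr)Y_n(Rx)\bigr]$ and using $|Y_n(Rx)| \ll ((Rx)^2-n^2)^{-1/4}$, the cross term is super-polynomially smaller than $E_4$. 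The sign statement $Y_n(rx)<0$ is immediate, since Olver's prefactor is positive and $\mathrm{Bi}>0$ on $(0,\infty)$.

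The main obstacle is the form of $E_4$. After dividing \eqref{case3-1} by the prefactor, the leading Airy term becomes exactly $\mathrm{Ai}(n^{2/3}\zeta_R)$ and the first Olver correction contributes $\mathrm{Ai}'(n^{2/3}\zeta_R)B_0(\zeta_R)/n^{4/3}$, where $B_0(\zeta_R) = O(1)$ uniformly for $z_R \in (1,R/r]$ by standard regularity of Olver's coefficients near $\zeta=0$. The size of $\mathrm{Ai}'$ splits into two regimes: when $xG(n/x) \ll 1$ the argument $n^{2/3}\zeta_R$ is bounded and $|\mathrm{Ai}'(n^{2/3}\zeta_R)| = O(1)$, yielding $E_4 = O(n^{-4/3})$; when $xG(n/x)$ is large, the bound $|\mathrm{Ai}'(-y)| \ll y^{1/4}$ with $y = (3\pi xG(n/x)/2)^{2/3}$ produces $E_4 = O(n^{-4/3}(xG(n/x))^{1/6})$. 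Combined these match \eqref{case3-3}.

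Finally, when $xG(n/x)>1$ the argument $y$ exceeds $(3\pi/2)^{2/3}$, so the standard asymptotic $\mathrm{Ai}(-y) = \pi^{-1/2}y^{-1/4}\bigl[\sin(\tfrac{2}{3}y^{3/2} + \pi/4) + O(y^{-3/2})\bigr]$ applies. Substituting into \eqref{case3-1}, the identity $\tfrac{2}{3}y^{3/2} = \pi xG(n/x)$ produces the stated sine, the numerical constants combine via $(12\pi)^{1/6}(3\pi/2)^{-1/6}\pi^{-1/2} = 8^{1/6}\pi^{-1/2} = \sqrt{2/\pi}$, and $y^{-3/2} \asymp (xG(n/x))^{-1}$ yields $E_5$, delivering \eqref{case3-2}.
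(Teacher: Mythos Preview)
Your proof is correct and follows essentially the same route as the paper: factor $Y_n(rx)$ out of $f_n$, use Olver's uniform Airy expansions in the monotone regime ($\zeta_r>0$) to see that $J_n(rx)/Y_n(rx)=O(e^{-cn^{3\varepsilon/2}})$ and hence the cross term is negligible, then read off \eqref{case3-1} from Olver's expansion of $J_n(Rx)$ via the identities $n^{2/3}(-\zeta_R)=(3\pi xG(n/x)/2)^{2/3}$ and your prefactor relation, splitting the $\mathrm{Ai}'$-correction into the two regimes $n^{2/3}(-\zeta_R)\lessgtr 1$ exactly as the paper does. The derivation of \eqref{case3-2} from \eqref{case3-1} via the large-argument asymptotic of $\mathrm{Ai}(-y)$ is also the paper's argument; one very minor slip is your remainder ``$+O(n^{-3})$'' in the Olver expansion (the next term is of order $n^{-7/3}$, not $n^{-3}$), but this is harmless since after dividing by the prefactor it is still $\ll n^{-4/3}\max\{1,(xG)^{1/6}\}$.
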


\begin{remark}
Comparing the asymptotics \eqref{case3-2} with \eqref{case2.5-1} at the same point $x=r^{-1}(n-n^{1/3+\varepsilon})$, we notice that $E_5(x)=O(n^{-1})$ is a better bound than $E_3(x)=O(n^{-2/3+2.5\varepsilon})$. This difference is due to the different methods used to prove these two lemmas. In fact, if we expand $Y_n(rx)$ at $x=r^{-1}(n-n^{1/3+\varepsilon})$ by Lemma \ref{9.3.4analogue}, then $E_5(x)$ becomes $E_3(x)$ and \eqref{case3-2} is consistent with \eqref{case2.5-1}.
\end{remark}

\begin{proof}[Proof of Lemma \ref{case3}]
As in the proof of Lemma \ref{case222}, we denote
\begin{equation*}
Rx=n z_{R} \quad \textrm{and}\quad  rx=n z_{r}.
\end{equation*}
Since $1<z_R< R/r$ the $\zeta_R=\zeta(z_R)$, determined by \eqref{def-zeta1}, is negative such that
\begin{equation*}
0<(-\zeta_R)^{3/2}\ll 1.
\end{equation*}
Meanwhile, since $r/R< z_r\leq 1-n^{-2/3+\varepsilon}$ the $\zeta_r=\zeta(z_r)$, determined by \eqref{def-zeta2}, is positive such that
\begin{equation*}
n^{-1+3\varepsilon/2}\ll \zeta_r^{3/2}\ll 1
\end{equation*}
whenever $n$ is sufficiently large.

With the estimate
\begin{equation*}
n^{\varepsilon}\ll n^{2/3}\zeta_r\ll n^{2/3},
\end{equation*}
applying Olver's asymptotic expansions \eqref{jnuse111} and \eqref{ynuse111} and asymptotics for the Airy functions (the 10.4.59, 10.4.61, 10.4.63, and 10.4.66 in \cite{abram:1972}) yields
\begin{equation*}
J_n(rx)=\left(2\pi\right)^{-1/2}\left(n^2-(rx)^2\right)^{-1/4}e^{-\frac{2}{3}n\zeta_r^{3/2}} \left(1+O\left(n^{-1}\zeta_r^{-3/2}\right)\right)
\end{equation*}
and
\begin{equation*}
Y_n(rx)=-\left(2/\pi\right)^{1/2}\left(n^2-(rx)^2\right)^{-1/4}e^{\frac{2}{3}n\zeta_r^{3/2}} \left(1+O\left(n^{-1}\zeta_r^{-3/2}\right)\right).
\end{equation*}
Hence $Y_n(rx)$ is always negative and
\begin{equation*}
\frac{J_n(rx)}{Y_n(rx)}=-\frac{1}{2}e^{-\frac{4}{3}n\zeta_r^{3/2}}\left(1+O\left(n^{-1}\zeta_r^{-3/2}\right)\right)
=O\left(e^{-n^{\varepsilon}}\right).
\end{equation*}
Therefore
\begin{equation}
f_n(x)=Y_n(rx)\left(J_n(n z_{R})+Y_n(n z_{R})O\left(e^{-n^{\varepsilon}}\right)\right).\label{case3-4}
\end{equation}

Notice that
\begin{equation*}
0<n^{2/3}(-\zeta_R)\ll n^{2/3}.
\end{equation*}
We discuss in two cases depending on whether $n^{2/3}(-\zeta_R)$ is large or small. Applying Olver's asymptotic expansions to $J_n(n z_{R})$ and $Y_n(n z_{R})$ in \eqref{case3-4} and bounds for the Airy functions yields the formula \eqref{case3-1} with
\begin{equation*}
E_4(x)=\left\{
\begin{array}{ll}
O\left(n^{-4/3}\left(n^{2/3}|\zeta_R|\right)^{1/4}\right),  & \textrm{if $n^{2/3}(-\zeta_R)\geq 1$,}\\
O\left(n^{-4/3}\right),  & \textrm{if $n^{2/3}(-\zeta_R)< 1$,}
\end{array}\right.
\end{equation*}
hence the bound \eqref{case3-3}, where we have used the fact
\begin{equation*}
n^{2/3}\left(-\zeta_R\right)=\left(\frac{3\pi}{2} x G\left(\frac{n}{x} \right)\right)^{2/3}.
\end{equation*}

It follows easily from \eqref{case3-1} and \eqref{case3-3} to get \eqref{case3-2} and \eqref{case3-5} by using the well-known asymptotics of $\mathrm{Ai}(-r)$.
\end{proof}


We can now collect all previous lemmas and give a description of zeros of $f_n$ for large $n$.

\begin{theorem} \label{thm111}
There exists a constant $c\in (0,1)$ such that for any $\varepsilon>0$ and all sufficiently large $n$ the positive zeros of $f_n$, $\{x_{n,k}\}_{k=1}^{\infty}$, satisfy the following:
\begin{enumerate}
\item if $rx_{n,k}\geq (1+c)n$ then
\begin{equation}
x_{n,k} G\left(\frac{n}{x_{n,k}}\right)=k+O\left(x_{n,k}^{-1}\right);\label{thm111-1}
\end{equation}

\item if $n+n^{1/3+\varepsilon}\leq rx_{n,k}<(1+c)n$ then
\begin{equation}
x_{n,k} G\left(\frac{n}{x_{n,k}}\right)=k+O\left(z_{n,k}^{-3/2}\right) \label{thm111-2}
\end{equation}
with $z_{n,k}$ determined by the equation $rx_{n,k}=n+z_{n,k} n^{1/3}$;

\item if $n-n^{1/3+\varepsilon}< rx_{n,k}< n+n^{1/3+\varepsilon}$ then
\begin{equation}
x_{n,k}G\left(\frac{n}{x_{n,k}}\right)=k-\psi\left(z_{n,k}\right)+O\left(n^{-2/3+2.5\varepsilon}\right),\label{thm111-3}
\end{equation}
where $\psi$ is the function appearing in Lemma \ref{case2.5} and $z_{n,k}$ is defined as above;

\item if $rx_{n,k}\leq n-n^{1/3+\varepsilon}$ then
\begin{equation}
x_{n,k} G\left(\frac{n}{x_{n,k}}\right)=k-\frac{1}{4}+E_{n,k},\label{thm111-4}
\end{equation}
where
\begin{equation*}
|E_{n,k}|<\min\left\{\frac{3}{8}, O\left(\left(x_{n,k} G\left(\frac{n}{x_{n,k}}\right)\right)^{-1}\right)\right\}.
\end{equation*}
\end{enumerate}
\end{theorem}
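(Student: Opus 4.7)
The plan is to derive each case by substituting $x = x_{n,k}$ into the asymptotic expansion of $f_n$ given by the matching Lemma \ref{case111}, \ref{case222}, \ref{case2.5}, or \ref{case3}, and inverting. Since the prefactor multiplying the sine (or Airy) term is nonvanishing on the relevant interval, the condition $f_n(x_{n,k}) = 0$ reduces to
\[
\sin\bigl(\pi\,x_{n,k}G(n/x_{n,k}) + \pi\alpha_{n,k}\bigr) = -E(x_{n,k}),
\]
where $\alpha_{n,k} = 0$ in cases (1)--(2), $\alpha_{n,k} = \psi(z_{n,k})$ in case (3), and $\alpha_{n,k} = 1/4$ in the sine form of case (4). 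The error bounds furnished by the lemmas are all $o(1)$ for $n$ large, so inverting the sine yields a unique integer $m_{n,k}$ with
\[
x_{n,k}G(n/x_{n,k}) + \alpha_{n,k} = m_{n,k} + O\bigl(|E(x_{n,k})|\bigr),
\]
which matches \eqref{thm111-1}--\eqref{thm111-4} modulo the identification $m_{n,k} = k$.

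To prove $m_{n,k} = k$, I would combine global monotonicity with a large-$k$ anchor. Using $g'(t) = -\arccos(t)/\pi$, a direct differentiation gives
\[
\frac{d}{dx}\bigl(xG(n/x)\bigr) = \frac{1}{\pi}\sqrt{R^2 - n^2/x^2} \;-\; \frac{1}{\pi}\sqrt{r^2 - n^2/x^2}\,\mathbf{1}_{\{x > n/r\}} \;>\; 0 \qquad \text{for } x > n/R,
\]
and Lemma \ref{case0} places every $x_{n,k}$ in this region. Because $\alpha_{n,k}$ is bounded and slowly varying, $x \mapsto xG(n/x) + \alpha$ is strictly increasing along the sequence of zeros, which forces $m_{n,k+1} = m_{n,k} + 1$. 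To pin down the offset, I would let $k \to \infty$ for fixed $n$: Lemma \ref{case0} puts us in case (1), the Taylor expansion $xG(n/x) = (R-r)x + O(x^{-1})$ applies, and the classical McMahon-type asymptotic $x_{n,k} = k/(R-r) + O(1)$ for the $k$-th cross-product zero (cf.\ Cochran \cite{Cochran:1966}) pins $m_{n,k} = k$ for all sufficiently large $k$. Monotonicity then propagates this down to every $k$ in every regime.

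Case (4) needs an extra step when $x_{n,k}G(n/x_{n,k})$ is not large and the sine form \eqref{case3-2} degenerates. There I would work from \eqref{case3-1}: zeros of $\mathrm{Ai}(-t) + O(n^{-4/3}\max\{1, t^{1/4}\})$ lie within $O(n^{-4/3})$ of the Airy zeros $a_k$, which satisfy $(2/(3\pi))\,a_k^{3/2} = k - 1/4 + O(k^{-2})$ by McMahon's expansion for Airy zeros. Inverting $t = (3\pi\,xG(n/x)/2)^{2/3}$ then gives $xG(n/x) = k - 1/4 + O(\cdot)$, and combining with the $xG(n/x) > 1$ bound from \eqref{case3-2} produces the uniform estimate $|E_{n,k}| < \min\{3/8,\;O((xG)^{-1})\}$ claimed in \eqref{thm111-4}.

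The hardest part is the index-matching step. The substitute-and-invert mechanism is routine given the lemmas, but establishing $m_{n,k} = k$ genuinely requires both the global monotonicity of $xG(n/x)$ on the range containing all zeros and a reliable anchor (for which the large-$k$ cross-product asymptotic is invoked). Everything else is bookkeeping: matching phases, tracking error magnitudes through the four regimes, and verifying consistency at the regime boundaries.
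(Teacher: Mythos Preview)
Your substitute-and-invert mechanism is fine and matches the paper's final step; the gap is exactly where you suspect, in the index matching, and the argument you sketch does not close it. Monotonicity of $h_n(x):=xG(n/x)$ together with ``$\alpha$ bounded and slowly varying'' shows at most that the integers $m_{n,k}$ are non-decreasing in $k$: since $\alpha_{n,k}$ drops by at most $1/4$ and $h_n$ is increasing, $m_{n,k+1}-m_{n,k}\ge 0$, but nothing prevents $m_{n,k+1}-m_{n,k}\in\{0,2,3,\dots\}$ without a priori two-sided control on the spacing $h_n(x_{n,k+1})-h_n(x_{n,k})$. You do not have such control before the theorem is proved (Corollary~\ref{cor1} is a \emph{consequence}). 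Your large-$k$ anchor, even granting the McMahon-type expansion (note $G(0)=(R-r)/\pi$, so a factor of $\pi$ is missing in your Taylor step), only yields $m_{n,k}=k+O(1)$; combined with ``non-decreasing'' this still does not pin down $m_{n,k}=k$ for every $k$.

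The paper sidesteps this circularity with a counting argument. Since $h_n$ is a continuous increasing bijection $[n/R,\infty)\to[0,\infty)$, for each $1\le k\le s$ there is a unique interval $(a_k,b_k)$ with $h_n((a_k,b_k))=(k-\tfrac38,\,k+\tfrac18)$; these are pairwise disjoint and sit inside $(n/R,(s+\tfrac12)\pi/(R-r))$. The interval is chosen so that $h_n(a_k)+\delta\in[k-\tfrac38,k-\tfrac18]$ and $h_n(b_k)+\delta\in[k+\tfrac18,k+\tfrac38]$ for every shift $\delta\in[0,\tfrac14]$, which lets one read off $f_n(a_k)f_n(b_k)<0$ directly from whichever of \eqref{case111-1}, \eqref{case222-1}, \eqref{case2.5-1}, \eqref{case3-2}, \eqref{case3-1} applies (for small $h_n$, one uses only the crude localization $t_k\in((\tfrac{3\pi}{2}h_n(a_k))^{2/3},(\tfrac{3\pi}{2}h_n(b_k))^{2/3})$ for the Airy zeros). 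The intermediate value theorem now puts at least one zero in each $(a_k,b_k)$; Cochran~\cite{cochran:1964} says there are \emph{exactly} $s$ zeros of $f_n$ in $(0,(s+\tfrac12)\pi/(R-r))$, so each $(a_k,b_k)$ contains exactly one, and by ordering that zero is $x_{n,k}$. This localizes $h_n(x_{n,k})\in(k-\tfrac38,k+\tfrac18)$ \emph{before} inverting the sine, so $m_{n,k}=k$ is automatic; in case~(4) with $h_n(x_{n,k})$ small the bound $|E_{n,k}|<\tfrac38$ is read off from this localization rather than from Airy-zero asymptotics.
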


\begin{proof}
The rough idea of this proof is to apply to $f_n(x)$ the intermediate value theorem in the interval $(0, (s+1/2)\pi/(R-r))$ for any sufficiently large integer $s$ and then J. Cochran's result of the number of zeros within such an interval (see \cite{cochran:1964}).

We will study zeros of $f_n$ only in $(n/R, (s+1/2)\pi/(R-r))$ for any sufficiently large integer $s>n^3$ since Lemma \ref{case0} tells us that there is no zeros $\leq n/R$. Inspired by the asymptotics obtained in this section, the study will be done via discussing the values of $h_n(x):=x G(n/x)$\footnote{This notation $h_n$ will be used through the rest of this section.} on the chosen interval (see Figure \ref{hn} for an example of the graph of $h_n$).
\begin{figure} [ht]
\centering
\includegraphics[width=0.6\textwidth]{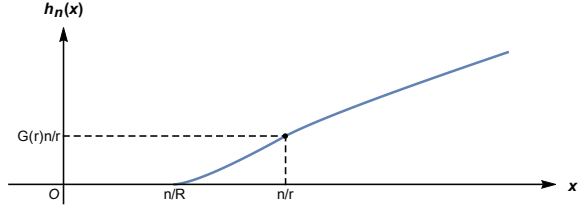} 
\caption{The graph of $h_n$ with $n=30$, $R=2$ and $r=1$.}
\label{hn}
\end{figure}
We observe that $h_n: [n/R, \infty)\rightarrow [0, \infty)$ is a continuous and strictly increasing function that maps $(n/R, (s+1/2)\pi/(R-r))$ onto $(0, s+1/2+O(n^{-1}))$. Therefore for each integer $1\leq k\leq s$ there exists an interval $(a_k, b_k)\subset (n/R, (s+1/2)\pi/(R-r))$ such that $h_n$ maps $(a_k, b_k)$ to $(k-3/8, k+1/8)$ bijectively. It is obvious that these intervals $(a_k, b_k)$'s are disjointly located one by one as $k$ increases.

We claim that if $n$ is sufficiently large then for each $1\leq k\leq s$
\begin{equation}
f_n(a_k)f_n(b_k)<0.\label{IVT-condition}
\end{equation}
If this is true, the intermediate value theorem ensures the existence of at least one zero of $f_n$ in each $(a_k, b_k)$. Recall that there are exactly $s$ zeros of $f_n$ in $(0, (s+1/2)\pi/(R-r))$ (see \cite{cochran:1964}). Hence there exists one and only one zero in each $(a_k, b_k)$, which must be $x_{n,k}$ by definition.

To verify \eqref{IVT-condition} we take advantage of the asymptotics \eqref{case111-1},  \eqref{case222-1}, \eqref{case2.5-1}, \eqref{case3-2} and \eqref{case3-1}. In all cases except the last one (when $x G(n/x)<C$ for a sufficiently large $C$), the verification is easy if we notice that
\begin{equation*}
h_n(a_k)+\delta_1 \in \left[k-\frac{3}{8}, k-\frac{1}{8}\right]
\quad \textrm{and} \quad h_n(b_k)+\delta_2\in \left[k+\frac{1}{8}, k+\frac{3}{8}\right]
\end{equation*}
for any $0\leq \delta_1, \delta_2\leq 1/4$.  In the last case when $x G(n/x)<C$ we use the asymptotics \eqref{case3-1}. The sign of $f_n$ depends on that of
\begin{equation}
\mathrm{Ai}\left(-\left(3\pi h_n(x)/2 \right)^{2/3}\right)+O_C\left(n^{-4/3}\right). \label{theorem-1}
\end{equation}
As in the proof of Lemma \ref{case2.5}, we denote by $t_k$ ($k\in\mathbb{N}$) the $k$th zero of the equation $\textrm{Ai}(-x)=0$. \cite[P.405]{olver:1997} gives that
\begin{equation*}
t_k=\left[\frac{3\pi}{2}\left(k-\frac{1}{4}+\alpha'_k\right)\right]^{2/3}
\end{equation*}
with a crude estimate $|\alpha'_k|<0.11$. Thus
\begin{align}
t_k\in &\left(\left[\frac{3\pi}{2}\left(k-0.36\right)\right]^{2/3}, \left[\frac{3\pi}{2}\left(k-0.14\right)\right]^{2/3} \right) \nonumber \\
&\subsetneq \left(\left[\frac{3\pi}{2}h_n(a_k)\right]^{2/3}, \left[\frac{3\pi}{2}h_n(b_k)\right]^{2/3} \right). \label{theorem-2}
\end{align}
Since $\textrm{Ai}(-x)$ oscillates around zero for positive $x$ and the intervals in \eqref{theorem-2} are disjoint for different $k$'s, the signs of \eqref{theorem-1} at $x=a_k$ and $b_k$ must be opposite whenever $n$ is sufficiently large, which in turn gives \eqref{IVT-condition} in the last case.

We are now able to finish the proof of the theorem. For each zero $x_{n,k}$, $f_n(x_{n,k})=0$. If $h_n(x_{n,k})\geq C$ we apply to the left hand side either \eqref{case111-1},  \eqref{case222-1}, \eqref{case2.5-1} or \eqref{case3-2}, and conclude that the factor involving the sine function and $h_n(x_{n,k})$ has to be zero. Since $h_n(x_{n,k})+\delta$ is always in the interval $[k-3/8, k+3/8]$ for any $0\leq \delta\leq 1/4$, applying the arcsine function immediately yields the desired asymptotics. If $h_n(x_{n,k})< C$ we use the fact that $h_n(x_{n,k})\in (h_n(a_k), h_n(b_k))$ to get a crude estimate $|h_n(x_{n,k})-(k-1/4)|<3/8$.
\end{proof}

%
%


For small $n$ we have the following.

\begin{theorem} \label{thm222}
For any $N\in\mathbb{N}$ there exists a constant $K>0$ such that if $0\leq n\leq N$ and $k\geq K$ then the positive zero $x_{n,k}$ of $f_n$ satisfies
\begin{equation}
x_{n,k} G\left(\frac{n}{x_{n,k}}\right)=k+O\left(x_{n,k}^{-1}\right). \label{thm222-2}
\end{equation}
\end{theorem}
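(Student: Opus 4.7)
The plan is to reduce the problem to the single regime covered by Lemma \ref{case111} and then mimic the intermediate-value argument from the proof of Theorem \ref{thm111}. First, fix any constant $c > 0$. Since $n \leq N$ is bounded and Lemma \ref{case0} gives $Rx_{n,k} > \pi(k - 1/4)$, the zero $x_{n,k}$ tends to infinity uniformly in $n$ as $k \to \infty$. Hence there is an integer $K_0 = K_0(N, c)$ such that for all $0 \leq n \leq N$ and $k \geq K_0$ one has $rx \geq \max\{(1+c)n, 1\}$ throughout a neighborhood of $x_{n,k}$ large enough for our purposes, so that \eqref{case111-1} with its error $O_c(x^{-1})$ applies.

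Next, I would set $h_n(x) := x G(n/x)$, which is continuous and strictly increasing on $[n/R, \infty)$ from $0$ to $\infty$. For each sufficiently large integer $k$, there is a unique interval $(a_k, b_k)$ on which $h_n$ bijects onto $(k - 1/4, k + 1/4)$; once $k$ is large both endpoints lie in the regime of Lemma \ref{case111}. Substituting into \eqref{case111-1}, the numerators at $a_k$ and $b_k$ become $\sin(\pi(k - 1/4)) + O(a_k^{-1})$ and $\sin(\pi(k + 1/4)) + O(b_k^{-1})$ respectively. These sines equal $\pm \tfrac{\sqrt{2}}{2}$ with opposite signs, so for $k$ large enough the error cannot overwhelm the main term, and $f_n(a_k) f_n(b_k) < 0$ follows.

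By the intermediate value theorem, $f_n$ has a zero in each such $(a_k, b_k)$. Invoking J.\ Cochran's counting result from \cite{cochran:1964} exactly as in the proof of Theorem \ref{thm111}, and recalling that $f_n$ has no zeros in $(0, n/R]$ by Lemma \ref{case0}, the zero in $(a_k, b_k)$ must be $x_{n,k}$ once $k$ exceeds some threshold $K$. Finally, plugging $x = x_{n,k}$ into \eqref{case111-1} and using $f_n(x_{n,k}) = 0$ yields $\sin(\pi h_n(x_{n,k})) = O(x_{n,k}^{-1})$. Since $h_n(x_{n,k}) \in (k - 1/4, k + 1/4)$, a single branch of arcsine converts this into $h_n(x_{n,k}) = k + O(x_{n,k}^{-1})$, which is precisely \eqref{thm222-2}.

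The main (mild) obstacle is keeping all implicit constants uniform in $n$ over the bounded range $0 \leq n \leq N$: the constant in Lemma \ref{case111}'s error is uniform in $n$ once $c$ is fixed, Cochran's counting is uniform, and Lemma \ref{case0}'s bound is explicit, so the threshold $K$ can be chosen to depend only on $N$. No new technical ingredient beyond the toolkit used for Theorem \ref{thm111} is required; indeed, the present statement is exactly the degenerate case of that theorem in which only the regime of Lemma \ref{case111} ever occurs.
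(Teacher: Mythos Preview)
Your proposal is correct and follows essentially the same route as the paper: reduce to the single regime of Lemma~\ref{case111}, locate a sign change via $h_n(x)=xG(n/x)$ on a short interval whose $h_n$-image brackets $k$, apply the intermediate value theorem, identify the zero as $x_{n,k}$ via Cochran's exact count, and finish with the arcsine. The only cosmetic differences are that the paper takes the $h_n$-image to be $(k-3/8,\,k+1/8)$ rather than your symmetric $(k-1/4,\,k+1/4)$, and that for the identification step it works inside the explicit interval $[(k-1/2)\pi/(R-r),\,(k+1/2)\pi/(R-r))$ (which by Cochran contains exactly one zero for large $k$) instead of appealing wholesale to the pigeonhole argument of Theorem~\ref{thm111}; neither change affects the substance.
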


\begin{proof}
If $0\leq n\leq N$ and $x>C_N$ for a sufficiently large constant $C_N$ then Lemma \ref{case111} (with $c=1$) gives a factorization \eqref{case111-1} of $f_n$ with $|E_1(x)|<1/100$. By using such a factorization we study $f_n$ on the interval
\begin{equation}
\left[\frac{(k-1/2)\pi}{R-r}, \frac{(k+1/2)\pi}{R-r}\right), \label{thm222-1}
\end{equation}
which is a subset of $(C_N, \infty)$ if $k$ is sufficiently large. As in the proof of Theorem \ref{thm111}, we then study the function $h_n(x)=x G(n/x)$ on a subinterval of \eqref{thm222-1}, denoted by $(a_k, b_k)$, with $h_n((a_k, b_k))=(k-3/8, k+1/8)$. Such a subset indeed exists if $k$ is sufficiently large since $h_n((k\pm 1/2)\pi/(R-r))=k\pm 1/2+O(N^2/k)$.

It is easy to see that $f_n(a_k)f_n(b_k)<0$. By the intermediate value theorem there exists at least one zero of $f_n$ in $(a_k, b_k)$, which must be $x_{n,k}$ since there exists exactly one zero  in the interval \eqref{thm222-1} if $k$ is sufficiently large (due to the fact that there are exactly $s$ zeros of $f_n$ in $(0, (s+1/2)\pi/(R-r))$ for sufficiently large integer $s$ (see \cite{cochran:1964})). Thus
\begin{equation*}
\sin\left( \pi h_n(x_{n,k})\right)+E_1(x_{n,k})=0.
\end{equation*}
Applying the arcsine function  yields the desired result.
\end{proof}


\begin{corollary}\label{cor1}
Given any sufficiently large integer $n$ and $0<\sigma<R$, for all $x_{n,k}$'s that are greater than $n/\sigma$ we have
\begin{equation*}
1 \ll x_{n,k+1}-x_{n,k}\ll_{\sigma} 1.
\end{equation*}
Furthermore, if $0<\sigma\leq r$ then the dependence of the implicit constant on $\sigma$ can be removed.

For any $N\in\mathbb{N}$ if $0\leq n\leq N$ and $k$ is sufficiently large then
\begin{equation*}
x_{n,k+1}-x_{n,k}\asymp 1.
\end{equation*}
\end{corollary}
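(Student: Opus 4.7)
The plan is to convert the spacing $x_{n,k+1}-x_{n,k}$ into an estimate for values of $h_n(x):=xG(n/x)$ (the function used in the proofs of Theorems \ref{thm111} and \ref{thm222}) and then apply the mean value theorem. Two ingredients are needed: an explicit formula for $h_n'$, and the localization $h_n(x_{n,k})\in(k-3/8,k+1/8)$ which is already supplied inside those proofs. A direct calculation using $g'(u)=-\arccos(u)/\pi$ and the piecewise definition of $G$ yields, with $t=n/x$,
\[
h_n'(x)=G(t)-tG'(t)=\frac{1}{\pi}
\begin{cases}
\sqrt{R^2-t^2}-\sqrt{r^2-t^2}, & 0\le t\le r,\\
\sqrt{R^2-t^2}, & r\le t\le R,
\end{cases}
\]
so $h_n'$ is continuous on $[0,R)$, strictly positive, and bounded above by the universal constant $\sqrt{R^2-r^2}/\pi$. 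On the subregion $\{t\le\sigma<R\}$ one has $h_n'\ge c_\sigma>0$ for some $c_\sigma$ depending only on $\sigma$; when $\sigma\le r$, the first branch is monotone in $t$ on $[0,r]$, so one may take $c_\sigma=(R-r)/\pi$ uniformly in $\sigma$.

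Inspection of the proofs of Theorems \ref{thm111} and \ref{thm222} shows that $x_{n,k}$ is located as the unique zero of $f_n$ in an interval $(a_k,b_k)$ on which $h_n$ maps bijectively onto $(k-3/8,k+1/8)$. Consequently $h_n(x_{n,k})\in(k-3/8,k+1/8)$ in every case of both theorems, and hence
\[
\tfrac{1}{2}<h_n(x_{n,k+1})-h_n(x_{n,k})<\tfrac{3}{2}.
\]
Since $h_n$ is strictly increasing and $x_{n,k+1}>x_{n,k}>n/\sigma$, the entire interval $[x_{n,k},x_{n,k+1}]$ lies in $\{n/x<\sigma\}$, where the derivative bounds apply. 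The mean value theorem gives $(x_{n,k+1}-x_{n,k})\,h_n'(\xi)=h_n(x_{n,k+1})-h_n(x_{n,k})$ for some $\xi\in(x_{n,k},x_{n,k+1})$, and combining the displays yields $x_{n,k+1}-x_{n,k}\asymp_\sigma 1$, with the $\sigma$-dependence removable when $\sigma\le r$. The assertion for bounded $n\le N$ and sufficiently large $k$ is analogous: Theorem \ref{thm222} forces $x_{n,k}\to\infty$ as $k\to\infty$, so eventually $n/x_{n,k}<r$ and the uniform bound $h_n'\ge(R-r)/\pi$ applies.

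The main subtlety is that cases (3) and (4) of Theorem \ref{thm111} do not give $h_n(x_{n,k})\to k$; they carry bounded but nonvanishing corrections such as $-\psi(z_{n,k})\in(-1/4,0)$ or $E_{n,k}$ with $|E_{n,k}|<3/8$. One must therefore use the sharper enclosure $h_n(x_{n,k})\in(k-3/8,k+1/8)$ coming from the intermediate-value-theorem step of those proofs, rather than any asymptotic identity, in order to keep $h_n(x_{n,k+1})-h_n(x_{n,k})$ bounded away from $0$.
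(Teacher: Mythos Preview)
Your proof is correct and follows essentially the same approach as the paper: compute $h_n'$ explicitly, use the localization of $h_n(x_{n,k})$ near $k$ furnished by Theorems~\ref{thm111} and~\ref{thm222}, and apply the mean value theorem. The paper's own proof is terser---it simply writes the derivative as $\frac{(R^2-r^2)/\pi}{\sqrt{R^2-(n/x)^2}+\sqrt{r^2-(n/x)^2}}$ on $x\ge n/r$ (which is your first branch rationalized) and cites the theorem \emph{statements} rather than their proofs; note that the statements already suffice, since even in the worst case~(4) of Theorem~\ref{thm111} one has $h_n(x_{n,k})\in(k-5/8,k+1/8)$, giving a difference in $(1/4,7/4)$, so your extra caution about extracting the tighter interval $(k-3/8,k+1/8)$ from inside the proofs is not strictly needed.
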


\begin{proof}
If $n\geq 1$, a straightforward computation shows that if $x\geq n/r$ then
\begin{equation*}
h_n'(x)=\frac{\left(R^2-r^2\right)/\pi}{\sqrt{R^2-(n/x)^2}+\sqrt{r^2-(n/x)^2}} \in \left(\frac{R-r}{\pi}, \frac{\sqrt{R^2-r^2}}{\pi} \right];
\end{equation*}
if $n/R\leq x\leq n/r$ then
\begin{equation*}
h_n'(x)=\frac{1}{\pi}\sqrt{R^2-(n/x)^2}\in \left[0, \frac{1}{\pi}\sqrt{R^2-r^2} \right].
\end{equation*}
For all sufficiently large $n$ the desired results follow from Theorem \ref{thm111}, the mean value theorem and the above first derivatives. For any  $N\in\mathbb{N}$  and $1\leq n\leq N$, we observe that if $k$ is sufficiently large (depending on $N$) then $x_{n,k}>n/r$. We can then derive the desired result similarly with Theorem \ref{thm111} replaced by Theorem \ref{thm222}.

The case $n=0$ follows trivially from Theorem \ref{thm222}.
\end{proof}

\begin{corollary}\label{cor2}
The error terms in both \eqref{thm111-1} and \eqref{thm222-2}, in \eqref{thm111-2}, in \eqref{thm111-3} and in \eqref{thm111-4} are of size
\begin{equation*}
O\left(\frac{1}{n+k}\right),\quad O\left(\frac{n^{1/2}}{\left(k-\frac{G(r)}{r}n\right)^{3/2}}\right), \quad O\left(n^{-2/3+2.5\varepsilon}\right) \quad \textrm{and}\quad O\left(\frac{1}{k}\right)
\end{equation*}
respectively.
\end{corollary}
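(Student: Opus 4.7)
I would proceed case by case. For \eqref{thm111-1} and \eqref{thm222-2} the plan is to show $x_{n,k} \gg n+k$, so that $O(x_{n,k}^{-1}) = O(1/(n+k))$. In both regimes $u := n/x_{n,k}$ is bounded away from the singular value $u = r$ of $G$, so $G(u)$ is pinched between positive constants; combining this with the relation $x_{n,k} G(u) = k + O(x_{n,k}^{-1})$ gives $x_{n,k} \asymp k$, while $x_{n,k} \gg n$ follows either from the hypothesis $rx_{n,k} \geq (1+c)n$ (for Theorem \ref{thm111}(1)) or, in Theorem \ref{thm222}, from the fact that $n$ is bounded by $N$. Case \eqref{thm111-3} requires no work since the stated error already has the claimed form. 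For \eqref{thm111-4}, the estimate $|E_{n,k}| < 3/8$ forces $x_{n,k} G(n/x_{n,k}) > k - 5/8$, which is $\geq k/2$ once $k \geq 2$, so the stated minimum collapses to $O(1/k)$; the case $k=1$ is trivial.

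The main work is in case \eqref{thm111-2}. I would set $H(u) := (n/u)G(u)$ so that $H(r) = nG(r)/r$ and Theorem \ref{thm111}(2) becomes $k = H(u) + O(z^{-3/2})$; the goal then reduces to proving $k - H(r) \asymp zn^{1/3}$. Using $g'(x) = -\arccos(x)/\pi$ I would verify by direct computation that $H'$ has constant sign and magnitude $\asymp n$ throughout $u \in (r/(1+c), r]$, with $H'(r^-) = -n\sqrt{R^2-r^2}/(\pi r^2)$ after simplifying $R\pi g(r/R) + r\arccos(r/R) = \sqrt{R^2-r^2}$. Integrating then gives $H(u) - H(r) \asymp n(r-u)$. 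A direct calculation of $u = nr/(n+zn^{1/3})$ yields $r - u = rz/(n^{2/3}+z) \asymp z/n^{2/3}$ uniformly for $z \in [n^\varepsilon, cn^{2/3}]$, so $H(u) - H(r) \asymp zn^{1/3}$. Since $z \geq n^\varepsilon$, the $O(z^{-3/2})$ perturbation is negligible against $zn^{1/3}$, and rearranging $z^{-3/2} \asymp n^{1/2}/(zn^{1/3})^{3/2}$ produces the claimed bound $O(n^{1/2}/(k-G(r)n/r)^{3/2})$.

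The main obstacle will be ensuring uniformity in case \eqref{thm111-2}. A naive Taylor expansion of $H$ at $u = r$ cannot cover the subregime $z \asymp n^{2/3}$, where $r - u$ is bounded below by a positive constant and so lies outside any Taylor neighborhood of $r$; moreover $G$ is only $C^1$ at $r$, so even in the small-$z$ subregime a second-order expansion is unavailable. The fix is to work directly from the integrated form $H(u) - H(r) = -\int_0^{r-u} H'(r-s)\,ds$ and rely only on the sign and uniform size of $H'$ across the whole interval $(r/(1+c), r]$, which treats the small-$z$ and large-$z$ subregimes on equal footing.
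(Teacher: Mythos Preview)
Your proposal is correct, but the route differs from the paper's in two places. For \eqref{thm111-1} and \eqref{thm222-2} the paper simply invokes Lemma~\ref{case0}, which gives $Rx_{n,k}>\sqrt{n^2+\pi^2(k-1/4)^2}$ and hence $x_{n,k}\gg n+k$ in one stroke; your argument instead extracts $x_{n,k}\asymp k$ from the relation $x_{n,k}G(n/x_{n,k})=k+O(1)$, which also works (a minor remark: what you actually need is that $G$ is bounded away from~$0$ on $[0,r/(1+c)]$, i.e.\ that $u$ stays away from $R$ where $G$ vanishes---calling $u=r$ a ``singular value'' is slightly off, though the conclusion is unaffected). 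The more substantial difference is in case~\eqref{thm111-2}. The paper introduces the auxiliary index $k_0$ with $rx_{n,k_0-1}<n\le rx_{n,k_0}$, uses the gap estimate of Corollary~\ref{cor1} to obtain $z_{n,k}\asymp n^{-1/3}(k-k_0)$, and then shows $k-k_0\asymp k-\tfrac{G(r)}{r}n$ via \eqref{thm111-3} and the monotonicity of $h_n$. Your approach bypasses Corollary~\ref{cor1} entirely: the computation $H'(u)=\tfrac{n}{\pi u^2}\bigl(\sqrt{r^2-u^2}-\sqrt{R^2-u^2}\bigr)$ shows $H'<0$ with $|H'|\asymp n$ uniformly on $(r/(1+c),r]$, and integrating gives $H(u)-H(r)\asymp n(r-u)\asymp zn^{1/3}$ directly. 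This is more self-contained and arguably cleaner analytically; the paper's argument has the mild structural advantage of reusing the zero-spacing result already proved, at the cost of an extra dependency.
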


\begin{remark}\label{cor2-3}
These bounds are all as small as we want if we choose $n$ or $k$ properly large. It is quite obvious to observe this except (perhaps) for the second bound. As to that, we just need to notice \eqref{cor2-2} below and the corresponding range of $z_{n,k}$, namely $n^{\varepsilon}\leq z_{n,k}<cn^{2/3}$.
\end{remark}

\begin{proof}[Proof of Corollary \ref{cor2}]
For \eqref{thm111-1} and \eqref{thm222-2} the desired bound follows easily from Lemma \ref{case0}. For \eqref{thm111-3} and \eqref{thm111-4} the bounds can be obtained directly from the asymptotics themselves.

We will focus on the error term in \eqref{thm111-2} below and prove that
\begin{equation}
z_{n,k}\asymp n^{-1/3}\left(k-\frac{G(r)}{r}n\right).\label{cor2-2}
\end{equation}
Let $k_0, k\in \mathbb{N}$ be such that
\begin{equation}
rx_{n,k_0-1}<n\leq rx_{n,k_0} \label{cor2-1}
\end{equation}
and
\begin{equation*}
n+n^{1/3+\varepsilon}\leq rx_{n,k}<(1+c)n.
\end{equation*}
Hence, by Corollary \ref{cor1}, $k-k_0\asymp x_{n,k}-x_{n,k_0} \gg n^{1/3+\varepsilon}$ which is much greater than $1$. Since $z_{n,k_0}\geq 0>z_{n,k_0-1}$ we have
\begin{equation*}
z_{n,k}\geq z_{n,k}-z_{n,k_0}=rn^{-1/3}\left(x_{n,k}-x_{n,k_0} \right)\asymp n^{-1/3}\left(k-k_0 \right)
\end{equation*}
and
\begin{equation*}
z_{n,k}< z_{n,k}-z_{n,k_0-1}=rn^{-1/3}\left(x_{n,k}-x_{n,k_0-1} \right)\asymp n^{-1/3}\left(k-k_0 \right).
\end{equation*}
By using \eqref{thm111-3}, \eqref{cor2-1} and the monotonicity of $h_n$ we have
\begin{equation*}
k-k_0\asymp k-\frac{G(r)}{r}n.
\end{equation*}
We therefore obtain \eqref{cor2-2}.
\end{proof}

Let $F: [0, \infty)\times [0, \infty)\setminus \{O\}\rightarrow \mathbb{R}$ be the function homogeneous of degree $1$ which satisfies $F\equiv1$ on the graph of $G$. By implicit differentiation, we have
\begin{equation*}
\partial_y F(x,y)=\frac{1}{(t, G(t))\cdot (-G'(t),1)}
\end{equation*}
and
\begin{equation*}
\partial_x F(x,y)=\frac{-G'(t)}{(t, G(t))\cdot (-G'(t),1)},
\end{equation*}
where $0\leq t<R$ is determined by $ty=G(t)x$, that is, $(t, G(t))$ is the intersection point of the graph of $G$ and the line segment connecting the origin $O$ and the point $(x, y)$. Analyzing the sizes of the above derivatives yields

\begin{lemma}\label{derivativeF}
Following the above notations, we have that if $R-c\leq t<R$ for a sufficiently small constant $c>0$ then
\begin{equation*}
0<\partial_y F(x,y)\asymp x^{1/3}y^{-1/3},
\end{equation*}
otherwise
\begin{equation*}
0<\partial_y F(x,y)\asymp_c 1.
\end{equation*}

We also have $0\leq \partial_x F(x,y)\ll 1$. In particular,  if $0<c'\leq t<R$ then
\begin{equation*}
0< \partial_x F(x,y)\asymp_{c'} 1.
\end{equation*}
\end{lemma}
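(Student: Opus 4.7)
The plan is to read off all four estimates from the explicit formulae for $\partial_y F$ and $\partial_x F$ displayed in the lemma and to track the behavior of the denominator $D(t):=-tG'(t)+G(t)$ as $t$ varies over $[0,R)$. Using $g'(u)=-\arccos(u)/\pi$, one computes
\[ G'(t)=\begin{cases}\bigl(\arccos(t/r)-\arccos(t/R)\bigr)/\pi, & 0\le t\le r,\\ -\arccos(t/R)/\pi, & r\le t<R,\end{cases} \]
which is strictly negative on $(0,R)$. Combined with $G(t)>0$ on $[0,R)$ and $G(R)=0$, this gives $D(t)>0$ on $[0,R)$, so $\partial_y F>0$ and $\partial_x F\ge 0$ throughout; moreover $G$ is strictly decreasing on $[0,R)$, a fact used below.

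The main technical step, and what I expect to be the key piece of the argument, is the asymptotic analysis near $t=R$. Writing $s=1-t/R$ and using $\arccos(1-s)=\sqrt{2s}(1+O(s))$, one obtains $-G'(t)\sim (\sqrt{2R}/\pi)(R-t)^{1/2}$, while a Taylor expansion of $g$ at $u=1$ yields
\[ G(t)=Rg(t/R)\sim \frac{2\sqrt{2}}{3\pi\sqrt R}(R-t)^{3/2}. \]
Hence the $-tG'(t)$ term dominates and $D(t)\asymp(R-t)^{1/2}$ for $t$ near $R$. The homogeneity relation $y/x=G(t)/t$ then forces $y/x\asymp(R-t)^{3/2}$, i.e.\ $(R-t)^{1/2}\asymp(y/x)^{1/3}$, so $\partial_y F=1/D(t)\asymp x^{1/3}y^{-1/3}$, which is the first claim. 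In the same regime $\partial_x F=-G'(t)/D(t)\asymp 1$ since numerator and denominator are both of order $(R-t)^{1/2}$; this also pins down the limiting value $1/R$ at the endpoint.

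The remaining bounds reduce to continuity and compactness on $[0,R-c]$ and on $[c',R]$. On $[0,R-c]$ the function $G$ is continuous, strictly decreasing, and positive, so $D(t)\ge G(t)\ge G(R-c)>0$; combined with the trivial upper bound $D(t)\le R\sup|G'|+\sup G$, this yields $D(t)\asymp_c 1$ and hence $\partial_y F\asymp_c 1$. For the global upper bound $\partial_x F\ll 1$, note that near $t=0$ the numerator $-G'(t)$ vanishes while $D(t)\to (R-r)/\pi$, near $t=R$ the ratio tends to $1/R$ by the previous paragraph, and on any compact subinterval in between both $-G'$ and $D$ are continuous and bounded, so the ratio stays globally bounded. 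Finally, for $t\ge c'>0$, the functions $-G'$ and $D$ are continuous and strictly positive on the compact set $[c',R]$ (using the asymptotic at $R$ to pin down positivity at the right endpoint), so $\partial_x F\asymp_{c'}1$. The essential subtlety throughout is the $3/2$-power vanishing of $G$ matched against the $1/2$-power vanishing of $-G'$ at $t=R$; this mismatch is what produces the exponent $1/3$ in the first bound.
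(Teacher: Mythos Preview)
Your argument is correct and is exactly the analysis the paper has in mind: the paper only records the explicit formulas $\partial_y F=1/D(t)$ and $\partial_x F=-G'(t)/D(t)$ with $D(t)=-tG'(t)+G(t)$ and then states that ``analyzing the sizes of the above derivatives yields'' the lemma, leaving the details to the reader. Your write-up supplies precisely those details --- the asymptotics $-G'(t)\asymp(R-t)^{1/2}$, $G(t)\asymp(R-t)^{3/2}$ near $t=R$ combined with $y/x=G(t)/t$, and the compactness argument on $[0,R-c]$ --- so there is nothing to compare; you have reconstructed the omitted computation. (One harmless slip: the constant in your displayed asymptotic for $-G'(t)$ should be $\sqrt{2}/(\pi\sqrt R)$ rather than $\sqrt{2R}/\pi$, but this does not affect any $\asymp$ statement.)
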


By  Theorem \ref{thm111} and \ref{thm222}, Corollary \ref{cor2} and Lemma \ref{derivativeF}, we have the following approximations of zeros.

\begin{corollary}\label{approximation}
There exists a constant $c\in (0,1)$ such that for any $\varepsilon>0$ there exists a $N\in\mathbb{N}$ such that if $n>N$ then the positive zeros of $f_n$, $\{x_{n,k}\}_{k=1}^{\infty}$, satisfy
\begin{equation}
x_{n, k}=F(n,k-\tau_{n,k})+R_{n,k},\label{approximation1}
\end{equation}
where
\begin{equation}
\tau_{n,k}=\left\{
            \begin{array}{ll}
            0,                         & \textrm{if $rx_{n,k}\geq n+n^{1/3+\varepsilon}$,}\\
            \psi\left(z_{n,k}\right),  & \textrm{if $n-n^{1/3+\varepsilon}< rx_{n,k}< n+n^{1/3+\varepsilon}$,}\\
            1/4,                       & \textrm{if $rx_{n,k}\leq n-n^{1/3+\varepsilon}$,}
            \end{array}
           \right. \label{translation}
\end{equation}
where $\psi$ is the function appearing in Lemma \ref{case2.5} with $z_{n,k}$ determined by the equation $rx_{n,k}=n+z_{n,k} n^{1/3}$, and
\begin{equation*}
R_{n,k}\!=\!\left\{
            \begin{array}{ll}
            \!O\left((n+k)^{-1}\right),                                         & \textrm{if $rx_{n,k}\geq (1+c)n$,}\\
           \!O\left(n^{1/2}\left(k-\frac{G(r)}{r}n\right)^{-3/2}\right),  & \textrm{if $n+n^{1/3+\varepsilon}\leq rx_{n,k}<(1+c)n$,}\\
            \!O\left(n^{-2/3+2.5\varepsilon}\right),                       & \textrm{if $n-n^{1/3+\varepsilon}< rx_{n,k}< n+n^{1/3+\varepsilon}$,}\\
            \!O\left(n^{1/3}k^{-4/3}\right),                                   &\textrm{if $rx_{n,k}\leq n-n^{1/3+\varepsilon}$.}
            \end{array}
        \right.
\end{equation*}

If $0\leq n\leq N$ there exists a $K\in\mathbb{N}$ such that if $k>K$ then \eqref{approximation1} holds with
\begin{equation}
\tau_{n,k}=\left\{
            \begin{array}{ll}
            0,                         & \textrm{if $k>K$,}\\
            1/4,                       & \textrm{if $1\leq k\leq K$,}
            \end{array}
           \right. \label{translation2}\end{equation}
\footnote{The definition of $\tau_{n,k}$ for $1\leq k\leq K$ is irrelevant here, however we define it anyway for the discussion in the next section.}and
\begin{equation*}
R_{n,k}=O\left((n+k)^{-1}\right).
\end{equation*}
\end{corollary}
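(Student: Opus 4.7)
The plan is to convert the approximations $x_{n,k} G(n/x_{n,k}) = k - \tau_{n,k} + E_{n,k}$ supplied by Theorems \ref{thm111} and \ref{thm222} (with the error $E_{n,k}$ controlled by Corollary \ref{cor2}) into approximations of $x_{n,k}$ itself, by linearizing $F$ in its second variable. The key identity is that, since $F$ is positively homogeneous of degree one and $F \equiv 1$ on the graph of $G$, and since $n/x_{n,k} \in (0, R)$ by Lemma \ref{case0},
\[
F\bigl(n, x_{n,k} G(n/x_{n,k})\bigr) = x_{n,k} \cdot F\bigl(n/x_{n,k}, G(n/x_{n,k})\bigr) = x_{n,k}.
\]
Substituting $x_{n,k} G(n/x_{n,k}) = k - \tau_{n,k} + E_{n,k}$ and applying the mean value theorem,
\[
x_{n,k} - F(n, k - \tau_{n,k}) = \partial_y F(n, \eta_{n,k}) \, E_{n,k}
\]
for some $\eta_{n,k}$ between $k - \tau_{n,k}$ and $k - \tau_{n,k} + E_{n,k}$. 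Thus $R_{n,k}$ is the product of $E_{n,k}$ and the size of $\partial_y F$ at this intermediate point, and the remaining task is to estimate that derivative via Lemma \ref{derivativeF}.

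For the point $(n, \eta_{n,k})$, the associated $t$-value $t^{\ast}$ defined by $t^{\ast} \eta_{n,k} = G(t^{\ast}) n$ stays close to $n/x_{n,k}$. In cases (1), (2), (3) of Theorem \ref{thm111}, as well as in the large-$k$ regime of Theorem \ref{thm222}, the bound $n/x_{n,k} \leq r(1+o(1))$ ensures that $t^{\ast}$ remains bounded away from $R$, so Lemma \ref{derivativeF} gives $\partial_y F \asymp 1$. The claimed $R_{n,k}$ in the first three cases then reduces to the corresponding $E_{n,k}$, namely $O((n+k)^{-1})$, $O\bigl(n^{1/2}(k - G(r)n/r)^{-3/2}\bigr)$, and $O(n^{-2/3+2.5\varepsilon})$ respectively. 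Likewise, for $n \leq N$ and $k > K$ the derivative is $O(1)$ and we inherit $R_{n,k} = O((n+k)^{-1})$.

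The delicate case is (4), where $rx_{n,k} \leq n - n^{1/3+\varepsilon}$ allows $Rx_{n,k}$ to approach $n$ (although Lemma \ref{case0} still forces $Rx_{n,k} > n$), so $t^{\ast} = n/x_{n,k}$ may tend to $R$. When this occurs, Lemma \ref{derivativeF} yields $\partial_y F \asymp n^{1/3}(k - \tau_{n,k})^{-1/3} \asymp n^{1/3} k^{-1/3}$; combining with $E_{n,k} = O(k^{-1})$ from Corollary \ref{cor2} produces $R_{n,k} = O(n^{1/3} k^{-4/3})$. If instead $t^{\ast}$ stays bounded away from $R$, we recover $R_{n,k} = O(k^{-1})$, which is dominated by $O(n^{1/3} k^{-4/3})$ since in this regime $x_{n,k} \leq (n - n^{1/3+\varepsilon})/r$ forces $k \leq x_{n,k} G(n/x_{n,k}) + O(1) \ll n$. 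Thus $O(n^{1/3} k^{-4/3})$ is uniform throughout case (4). The only real obstacle is precisely this tracking of the blow-up of $\partial_y F$ near $t = R$; the other cases amount to bookkeeping once Lemma \ref{derivativeF} is applied.
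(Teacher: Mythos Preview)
Your proposal is correct and follows essentially the same route as the paper: invoke the homogeneity identity $F(n,x_{n,k}G(n/x_{n,k}))=x_{n,k}$, apply the mean value theorem in the second variable, and then control $\partial_y F$ case by case via Lemma~\ref{derivativeF} together with the error bounds from Corollary~\ref{cor2}. The only cosmetic difference is that in case~(4) the paper argues directly that $\partial_y F\asymp n^{1/3}k^{-1/3}$ in both sub-regimes (using $k/n\ll 1$), whereas you split into the two sub-cases and then observe that $O(k^{-1})$ is dominated by $O(n^{1/3}k^{-4/3})$ when $k\ll n$; both arrive at the same bound.
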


\begin{proof}
If $rx_{n,k}>n-n^{1/3+\varepsilon}$ then $x_{n,k}>2n/(R+r)$ for sufficiently large $n$. By using \eqref{thm111-1}--\eqref{thm111-3} and the monotonicity of $h_n$, we have
\begin{equation*}
\frac{k}{n}\geq \frac{h_n(x_{n,k})}{2n}\geq \frac{1}{R+r}G\left(\frac{R+r}{2}\right),
\end{equation*}
which, by Lemma \ref{derivativeF}, ensures that $\partial_y F(n, k)\asymp 1$. The \eqref{approximation1} with $rx_{n,k}>n-n^{1/3+\varepsilon}$ then follows from \eqref{thm111-1}--\eqref{thm111-3}, the mean value theorem and Corollary \ref{cor2}.

If $rx_{n,k}\leq n-n^{1/3+\varepsilon}$ then $x_{n,k}<n/r$. We argue as above to get that
\begin{equation*}
\frac{k}{n}=\frac{h_n(x_{n,k})+O(1)}{n}\ll 1.
\end{equation*}
By Lemma \ref{derivativeF}, if $k/n$ is sufficiently small then $\partial_y F(n, k)\asymp n^{1/3}k^{-1/3}$; otherwise $k/n \asymp 1$ which also ensures that $\partial_y F(n, k)\asymp n^{1/3}k^{-1/3}$. The \eqref{approximation1} with $rx_{n,k}\leq n-n^{1/3+\varepsilon}$ thus follows from \eqref{thm111-4}, the mean value theorem and Corollary \ref{cor2}.

At last, the case $0\leq n\leq N$ follows easily from Theorem \ref{thm222}, Corollary \ref{cor2} and Lemma \ref{derivativeF}.
\end{proof}


\section{Spectrum counting to lattice counting}\label{reduction-sec}

Consider the Dirichlet Laplacian operator $\triangle$ on the planar annulus $\mathscr{D}$. Using the standard separation of variables, we know that its spectrum contains exactly the numbers $x_{n,k}^2$, $n\in\mathbb{N}\cup\{0\}$, $k\in\mathbb{N}$, defined at the beginning of Section \ref{zeros}. We also know that in the spectrum each $x_{n,k}$ appears twice for every fixed $n\in\mathbb{N}$ and only once if $n=0$. If we define $x_{n,k}=x_{-n,k}$ for any negative integer $n$, then the spectrum counting function $\mathscr{N}_{\mathscr{D}}(\mu)$ defined by \eqref{e-counting} becomes
\begin{equation*}
\mathscr{N}_{\mathscr{D}}(\mu)=\#\left\{(n, k)\in\mathbb{Z}\times \mathbb{N} : x_{n,k}\leq \mu \right\}.
\end{equation*}

Recall that we define in Corollary \ref{approximation} (with the $c$ and $\varepsilon$ appearing there fixed) the amount of translation $\tau_{n,k}$ for $n\in \mathbb{N}\cup \{0\}$, $k\in \mathbb{N}$, namely, \eqref{translation} and \eqref{translation2}. We now extend its definition to $\mathbb{Z}^2$ by letting $\tau_{n,k}$ be $\tau_{-n,k}$ if $n<0$ and $1/4$ if $k\leq 0$. In view of the multiplicity of the spectrum and Corollary \ref{approximation}, each $x_{n,k}$, $n\in\mathbb{Z}$, corresponds to a unique point $(n,k-\tau_{n,k})$.

Denote by $\mathcal{D}$ the closed domain symmetric about the $y$-axis and in the first quadrant bounded by the graph of $G$ and the $x$-axis. See the shaded area in Figure \ref{SymmH}. Define a lattice counting function $\mathcal{N}_{\mathcal{D}}(\mu)$ by
\begin{equation*}
\mathcal{N}_{\mathcal{D}}(\mu)=\#\left(\mu\mathcal{D}\cap \left\{(n,k-\tau_{n,k}) : (n,k)\in\mathbb{Z}^2\right\}\right), \quad \mu>2.
\end{equation*}
\begin{figure}[ht]
\centering
\includegraphics[width=0.6\textwidth]{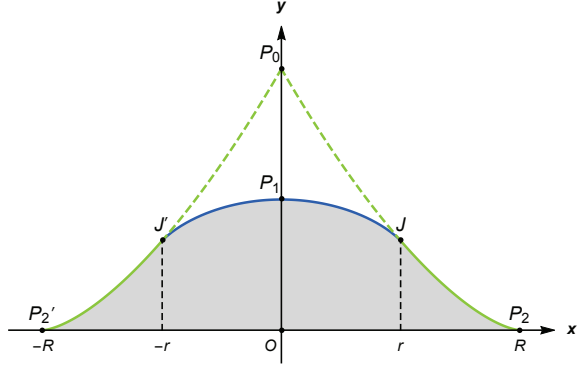} 
\caption{The symmetric domain $\mathcal{D}$.}
\label{SymmH}
\end{figure}

Then one can transfer the spectrum counting problem to a lattice counting problem via the following result. In its proof we essentially follow the treatment for ``the boundary parts'' in \cite[Theorem 3.1]{colin:2011}.

\begin{proposition}\label{difference1}
There exists a constant $C>0$ such that
\begin{equation}
\left|\mathscr{N}_{\mathscr{D}}(\mu)-\mathcal{N}_{\mathcal{D}}(\mu) \right|\leq \mathcal{N}_{\mathcal{D}}\left(\mu+C\mu^{-0.4}\right)-
\mathcal{N}_{\mathcal{D}}\left(\mu-C\mu^{-0.4}\right)+O\left(\mu^{0.6} \right). \label{222}
\end{equation}
\end{proposition}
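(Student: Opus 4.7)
The plan is to exploit the pointwise approximation $x_{n,k} = F(n, k-\tau_{n,k}) + R_{n,k}$ from Corollary \ref{approximation}. Since each zero $x_{n,k}$ corresponds bijectively to the lattice point $(n, k-\tau_{n,k})$, a pair $(n,k)$ contributes to the discrepancy $|\mathscr{N}_\mathscr{D}(\mu) - \mathcal{N}_\mathcal{D}(\mu)|$ only when $x_{n,k}$ and $F(n, k-\tau_{n,k})$ lie on opposite sides of $\mu$, which forces
\begin{equation*}
F(n, k-\tau_{n,k}) \in \bigl[\mu - |R_{n,k}|,\; \mu + |R_{n,k}|\bigr].
\end{equation*}

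The first step is to fix a sufficiently large constant $C$ and separate the contributing pairs into a ``good'' set with $|R_{n,k}| \leq C\mu^{-0.4}$ and a ``bad'' set with $|R_{n,k}| > C\mu^{-0.4}$. The good part is bounded directly by $\mathcal{N}_\mathcal{D}(\mu + C\mu^{-0.4}) - \mathcal{N}_\mathcal{D}(\mu - C\mu^{-0.4})$, producing the first term on the right-hand side of \eqref{222}. The remaining task is to bound the bad part by $O(\mu^{0.6})$, which I would handle case by case following Corollary \ref{approximation}. In Case 1 one has $|R_{n,k}| = O((n+k)^{-1})$, and since $F$ is homogeneous of degree $1$ the condition $F(n,k-\tau_{n,k})\asymp \mu$ at the boundary forces $n+k \asymp \mu$, hence $|R_{n,k}| = O(\mu^{-1})$. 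In Case 3 the constraints $rx_{n,k}\approx n$ and $x_{n,k}\approx \mu$ force $n\asymp r\mu$, so $|R_{n,k}| = O(\mu^{-2/3+2.5\varepsilon})$. For $\varepsilon$ chosen small enough, both bounds are below $\mu^{-0.4}$, so Cases 1 and 3 produce no bad points.

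The real work is in Cases 2 and 4, which describe neighborhoods of the two boundary singularities of $\mathcal{D}$: the points $(\pm r, G(r))$, where $G''$ becomes unbounded from one side, and the $3/2$-power cusps at $(\pm R, 0)$. For Case 2, parametrizing $n = r\mu - s'$ and using the local expansion $G(r-u) - G(r) = |G'(r)|u - \frac{\sqrt{2}}{\pi r^{1/2}} u^{3/2} + O(u^2)$ shows the bad boundary range is $s' \in [\mu^{1/3+\varepsilon},\, O(\mu^{0.6})]$; for each such $n$ the uniform spacing $x_{n,k+1} - x_{n,k}\asymp 1$ from Corollary \ref{cor1} gives $O(1 + \mu^{1/2}/s'^{3/2})$ discrepancy values of $k$, and summing over $s'$ yields $O(\mu^{0.6})$. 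For Case 4, using the cusp expansion $G(t) \sim \frac{\sqrt{2}}{\pi R^{1/2}}(R-t)^{3/2}$ as $t\to R^-$ together with the fact that the spacing of $\{x_{n,k}\}_k$ stretches to $\asymp n^{1/3}k^{-1/3}$ for small $k$, summing over $k$ from $1$ to $O(\mu^{0.55})$ the $O(\mu^{1/3}/k^{4/3})$ admissible values of $n$ per $k$ yields $O(\mu^{1/3})$. The main obstacle, and the source of the exponent $0.6$, is exactly Case 2: the balance between the linear term $|G'(r)|u$ and the $u^{3/2}$ correction from the one-sided curvature singularity of $G$ at $t = r^-$ is what determines the bad range, and organizing the strip counts without overcounting requires careful use of the different pieces of Corollary \ref{approximation} in each regime.
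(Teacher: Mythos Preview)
Your approach is essentially the same as the paper's, just organized differently: the paper slices by rows (fixing $k$ and bounding the per-row discrepancy $\Delta_k(\mu)$), whereas you first separate good from bad pairs by the size of $|R_{n,k}|$ and then handle the bad set case by case. Both rely on the same key observation \eqref{999} and the derivative bounds of Lemma~\ref{derivativeF}, and both identify Case~2 (the one-sided curvature singularity at $t=r$) as the source of the exponent~$0.6$.

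There is one small slip in your Case~4 count. You write that there are $O(\mu^{1/3}/k^{4/3})$ admissible values of $n$ per $k$ and sum to $O(\mu^{1/3})$. But once $k>\mu^{1/4}$ this quantity drops below $1$, while the discrepancy interval for $n$ can still contain a single integer; the honest bound is $O(1+\mu^{1/3}/k^{4/3})$, and summing over $k\le\mu^{0.55}$ gives $O(\mu^{0.55})$, not $O(\mu^{1/3})$. (The paper handles this by splitting at $k=\mu^{1/4}$ and using the trivial $\Delta_k\ll 1$ above it.) Since $0.55<0.6$ the final bound is unaffected, but the stated $O(\mu^{1/3})$ is not what your argument actually produces.
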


\begin{proof}
To study $\mathscr{N}_{\mathscr{D}}(\mu)$ we would like to use the approximations of $x_{n, k}$'s given by Corollary \ref{approximation}. We need to assume that $\max\{|n|,k\}$ is sufficiently large, however, we will not emphasize this explicitly in the following argument. This treatment will not cause any problem; after all, it will produce at most an $O(1)$ error, which is much less than the error term $O(\mu^{0.6})$ in \eqref{222}.

For $k\in\mathbb{N}$, let
\begin{equation*}
\mathscr{N}_k(\mu):=\#\left\{n\in\mathbb{N} : x_{n,k}\leq \mu \right\}=\#\left\{n\in\mathbb{N} : F(n,k-\tau_{n,k})+R_{n,k}\leq \mu\right\}
\end{equation*}
and
\begin{equation*}
\mathcal{N}_k(\mu):=\#\left\{n\in\mathbb{N} : (n,k-\tau_{n,k})\in\mu\mathcal{D}\right\}=\#\left\{n\in\mathbb{N} : F(n,k-\tau_{n,k})\leq\mu\right\}.
\end{equation*}
Then
\begin{align}
\Delta_k(\mu):&=|\mathscr{N}_k(\mu)-\mathcal{N}_k(\mu)| \nonumber \\
            &\leq \#\left\{n\in\mathbb{N} : \mu-|R_{n,k}|\leq F(n,k-\tau_{n,k})\leq \mu+|R_{n,k}|\right\}. \label{999}
\end{align}
Hence we only need to consider points $(n,k-\tau_{n,k})$ satisfying $F(n,k-\tau_{n,k})=\mu+O(1)$.

We next use Lemma \ref{derivativeF} and Corollary \ref{approximation} to obtain bounds of $\Delta_k(\mu)$.   We discuss in several cases depending on the size of $k$.

If $1\leq k\leq \mu^{1/4}$ then
\begin{equation}
\Delta_k(\mu)\ll \mu^{1/3}k^{-4/3}.\label{444}
\end{equation}
Indeed, in this case we have $n\asymp \mu$. This, together with Theorem \ref{thm111}, leads to
\begin{equation*}
\frac{G(n/x_{n,k})}{n/x_{n,k}}=\frac{k+O(1)}{n}\ll \mu^{-3/4},
\end{equation*}
which implies that $n/x_{n,k}$ is close to $R$ and thus $rx_{n,k}\leq n-n^{1/3+\varepsilon}$. Therefore $R_{n,k}=O(n^{1/3}k^{-4/3})$. Using the estimate of $\partial_x F$ we get \eqref{444}.

If $\mu^{1/4}< k\leq \mu^{4/7}$ then a similar argument as above shows that $R_{n,k}=O(n^{1/3}k^{-4/3})=O(1)$ and
\begin{equation*}
\Delta_k(\mu)\ll 1. \footnote{In this case $R_{n,k}$ may be much smaller than $1$, but there may exist one element in the set in \eqref{999}. Hence we may only use the trivial bound $O(1)$ for $\Delta_k(\mu)$.}
\end{equation*}

If $\mu^{4/7}< k\leq G(r)\mu-C_1$ for a sufficiently large constant $C_1$ (to be determined below) then
\begin{equation}
\Delta_k(\mu)\leq \mathcal{N}_k(\mu+C\mu^{-3/7})-\mathcal{N}_k(\mu-C\mu^{-3/7})\label{666}
\end{equation}
for some constant $C$. Indeed, let us fix arbitrarily an element $n$ belonging to the set in \eqref{999}, hence the point $(n,k-\tau_{n,k})$ is contained in a tubular neighborhood of $\mu\partial \mathcal{D}$ of width much less than $1$ (see Remark \ref{cor2-3}). Since $G'$ is continuous at the point $x=r$ and $G'(r)\in (-1/2, 0)$, as $\mu\rightarrow \infty$ the tubular neighborhood (mentioned above) between $y=G(r)\mu$ and $y=G(r)\mu-C_1$ is close to a parallelogram. A simple geometric argument ensures that if $C_1$ is a sufficiently large constant then $n\geq r\mu$. As a result,
\begin{equation}
\frac{k}{n}\leq \frac{G(r)}{r}-\frac{C_1}{n}.\label{555}
\end{equation}
On the other hand side we observe, as a consequence of Theorem \ref{thm111} and the monotonicity of $G$, that if $rx_{n,k}\geq n+n^{1/3+\varepsilon}$ then
\begin{equation*}
\frac{k}{n}=\frac{G(n/x_{n,k})}{n/x_{n,k}}+O\left(n^{-1-\frac{3}{2}\varepsilon} \right)>\frac{G(r)}{r}+O\left(n^{-1-\frac{3}{2}\varepsilon}\right),
\end{equation*}
which contradicts with \eqref{555}. Therefore $rx_{n,k}< n+n^{1/3+\varepsilon}$ and $R_{n,k}$ can only be either $O(n^{-2/3+2.5\varepsilon})$ or $O(n^{1/3}k^{-4/3})$, both of which are of size $O(\mu^{-3/7})$ since $n\asymp \mu$. We then readily get \eqref{666}.

If $G(r)\mu-C_1< k\leq G(r)\mu+\mu^{0.6}$ then the trivial estimate $R_{n,k}=O(1)$ yields that
\begin{equation*}
\Delta_k(\mu)\ll 1.
\end{equation*}

If $k>G(r)\mu+\mu^{0.6}$ then
\begin{equation}
\Delta_k(\mu)\leq \mathcal{N}_k(\mu+C\mu^{-0.4})-\mathcal{N}_k(\mu-C\mu^{-0.4})\label{888}
\end{equation}
for some constant $C$. Since the proof is almost the same as that of \eqref{666}, let us be brief. We still fix arbitrarily an element $n$ belonging to the set in \eqref{999}. A geometric argument shows that $n<r\mu$. Thus
\begin{equation*}
\frac{k}{n}> \frac{G(r)}{r}+\frac{\mu^{0.6}}{n}.
\end{equation*}
However, if $rx_{n,k}\leq n-n^{1/3+\varepsilon}$ then
\begin{equation*}
\frac{k}{n}=\frac{G(n/x_{n,k})}{n/x_{n,k}}+\frac{1/4+O(1)}{n}<\frac{G(r)}{r}+\frac{O(1)}{n},
\end{equation*}
which is impossible. Hence $rx_{n,k}>n-n^{1/3+\varepsilon}$ and $R_{n,k}$ can be in the form of $O\left((n+k)^{-1}\right)$, $O\left(n^{1/2}\left(k-\frac{G(r)}{r}n\right)^{-3/2}\right)$ or $O(n^{-2/3+2.5\varepsilon})$. In fact, we further observe that if $n/\mu$ is sufficiently small then $k/n$ is sufficiently large and $R_{n,k}$ must be $O\left((n+k)^{-1}\right)$, as a consequence of Theorem \ref{thm111} and \ref{thm222}. To conclude the proof of \eqref{888}, we only need to notice that no matter in which form the $R_{n,k}$ is, it is always of size $O(\mu^{-0.4})$.

If $n=0$, by using exactly the same argument as above we get
\begin{align}
&\left|\#\left\{k\in\mathbb{N} : x_{0,k}\leq \mu \right\}-\#\left\{k\in\mathbb{N} : (0,k-\tau_{0,k})\in\mu\mathcal{D}\right\}\right| \label{333}\\
&\quad \leq \#\left\{k\in\mathbb{N} : (0,k-\tau_{0,k})\in \left(\mu+C\mu^{-1}\right)\mathcal{D}\setminus \left(\mu-C\mu^{-1}\right)\mathcal{D}  \right\} \nonumber
\end{align}
for some constant $C>0$.

To conclude, summing the above bounds of $\Delta_k(\mu)$ over $k\in\mathbb{N}$ and using the symmetry between positive and negative $n$'s and the bound \eqref{333} yields the desired inequality.
\end{proof}


$\mathcal{N}_{\mathcal{D}}(\mu)$ counts the number of lattice points (under various translations) in $\mu\mathcal{D}$. This feature brings us some obstacles in its estimation. To overcome this difficulty we move every point $(n, k-\tau_{n,k})$ to $(n, k-1/4)$ to obtain an uniformity in translation,  and then study the relatively standard lattice counting function
\begin{equation}\label{lattice-pro2}
\mathcal{N}_{\mathcal{D}}^{u}(\mu)=\#\left(\mu\mathcal{D}\cap \left\{(n,k-1/4) : (n,k)\in\mathbb{Z}^2\right\}\right), \quad \mu>2.
\end{equation}
Here the superscript ``u'' represents the uniformity in translation. Of course such a transformation from $\mathcal{N}_{\mathcal{D}}(\mu)$ to $\mathcal{N}_{\mathcal{D}}^{u}(\mu)$ will cause a difference. To quantify that we need to count the number of lattice points in a band of length $r\mu$ and width $1/4$. (This will be clear in the proof of the next proposition.)

For $0<L\leq R\mu$ let us define a band on $[0, L]$ by
\begin{equation*}
\mathcal{B}_{L}=\left\{(x,y)\in\mathbb{R}^2 : 0\leq x\leq L,   \,    \mu G\left(\frac{x}{\mu} \right)< y\leq \mu G\left(\frac{x}{\mu} \right)+\frac{1}{4} \right\}
\end{equation*}
and the number of $\mathbb{Z}^2$ in the band $\mathcal{B}_{r\mu}$ by
\begin{equation}
\# \left(\mathcal{B}_{r\mu}\cap\mathbb{Z}^2 \right)=\frac{1}{4}r\mu+\mathcal{E}(\mu). \label{error-in-band}
\end{equation}

One would expect the error term $\mathcal{E}(\mu)$ to be much smaller than the linear term $r\mu/4$ since heuristically the number of lattice points inside a large planar domain is asymptotically equal to the area of the domain with an error term that is not too bad if the curvature involved does not vanish. We will estimate $\mathcal{E}(\mu)$ in the next section.

With $\mathcal{E}(\mu)$ defined as above we have
\begin{proposition}\label{difference2}
\begin{equation*}
\mathcal{N}_{\mathcal{D}}^{u}(\mu)=\mathcal{N}_{\mathcal{D}}(\mu)+\frac{1}{2}r\mu+2\mathcal{E}(\mu)+O\left(\mu^{1/3+\varepsilon}\right).
\end{equation*}
\end{proposition}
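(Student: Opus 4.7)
The plan is to compare $\mathcal{N}_{\mathcal{D}}^u(\mu)$ and $\mathcal{N}_{\mathcal{D}}(\mu)$ column-by-column in the horizontal coordinate $n$ and to identify the main discrepancy as a lattice-point count in the thin band sitting just above the scaled boundary curve. Decompose $\mathcal{N}_{\mathcal{D}}(\mu) = \sum_{n \in \mathbb{Z}} \mathcal{N}_n(\mu)$ and $\mathcal{N}_{\mathcal{D}}^u(\mu) = \sum_{n \in \mathbb{Z}} \mathcal{N}_n^u(\mu)$, where the column counts collect those $k \geq 1$ with respectively $k - \tau_{n,k} \leq \mu G(|n|/\mu)$ and $k - 1/4 \leq \mu G(|n|/\mu)$. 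Since $\tau_{n,k} \in [0, 1/4]$, subtracting gives
\[
\mathcal{N}_n^u(\mu) - \mathcal{N}_n(\mu) = \sum_{k \geq 1} \mathbf{1}\!\left(\mu G(|n|/\mu) + \tau_{n,k} < k \leq \mu G(|n|/\mu) + 1/4\right).
\]
Terms with $\tau_{n,k} = 1/4$ contribute nothing, terms with $\tau_{n,k} = 0$ contribute $\mathbf{1}(\mu G(|n|/\mu) < k \leq \mu G(|n|/\mu) + 1/4)$, and the middle terms $\tau_{n,k} = \psi(z_{n,k})$ contribute $\mathbf{1}(\mu G(|n|/\mu) + \psi(z_{n,k}) < k \leq \mu G(|n|/\mu) + 1/4)$. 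In every case the $k$-interval has length at most $1/4$, so at most one integer contributes per column.

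Next I would localize the middle region geometrically. By Corollary \ref{approximation}, $x_{n,k} = F(n, k - \tau_{n,k}) + O(1)$, and since $F$ is homogeneous of degree $1$ and identically $1$ on the graph of $G$, the equation $rx_{n,k} = n$ translates into $k \approx nG(r)/r$. The top of the column sits at $y = \mu G(|n|/\mu)$, and these two heights coincide precisely at $|n| = r\mu$ with linear separation in $|n| - r\mu$. Accordingly, for $|n| > r\mu + C\mu^{1/3+\varepsilon}$ (with $C$ a sufficiently large constant) the entire column lies in the small regime $\tau_{n,k} = 1/4$ and the column contribution vanishes; for $|n| < r\mu - C\mu^{1/3+\varepsilon}$, the unique candidate integer $k \in (\mu G(|n|/\mu), \mu G(|n|/\mu) + 1/4]$ satisfies $x_{n,k} = \mu + O(1)$, hence $rx_{n,k} > |n| + n^{1/3+\varepsilon}$, placing it in the large regime $\tau_{n,k} = 0$, so the column contribution equals exactly $\mathbf{1}((n,k) \in \mathcal{B}_{r\mu}^{\mathrm{sym}})$, where $\mathcal{B}_{r\mu}^{\mathrm{sym}}$ is the symmetrization of $\mathcal{B}_{r\mu}$ across the $y$-axis; and the edge zone $\big||n| - r\mu\big| \leq C\mu^{1/3+\varepsilon}$ contains $O(\mu^{1/3+\varepsilon})$ columns.

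Summing the generic contribution and invoking the reflection symmetry (with the $O(1)$ correction at $n = 0$) yields $2\,\#(\mathcal{B}_{r\mu} \cap \mathbb{Z}^2) + O(\mu^{1/3+\varepsilon})$, which by \eqref{error-in-band} equals $\frac{1}{2} r\mu + 2 \mathcal{E}(\mu) + O(\mu^{1/3+\varepsilon})$. Since each edge column contributes at most $1$ to the discrepancy, the total edge contribution is also $O(\mu^{1/3+\varepsilon})$, and the proposition follows. The main obstacle is the edge-zone analysis: when $\big||n| - r\mu\big| \lesssim \mu^{1/3+\varepsilon}$ the top of the column sits inside the middle region and the precise value of $\tau_{n,k} = \psi(z_{n,k})$ matters; to justify the $O(1)$ per-column bound one needs the strict monotonicity and boundedness of $\psi$ (Lemma \ref{case2.5}) together with the spacing $x_{n,k+1} - x_{n,k} \asymp 1$ (Corollary \ref{cor1}) to guarantee that the relevant $k$-interval meets $\mathbb{Z}$ in at most one point.
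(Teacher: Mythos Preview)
Your argument is correct and follows essentially the same route as the paper's proof: both identify the discrepancy $\mathcal{N}_{\mathcal{D}}^{u}(\mu)-\mathcal{N}_{\mathcal{D}}(\mu)$ with (twice) the number of points in the band $\mathcal{B}_{r\mu}$ up to an edge correction of width $O(\mu^{1/3+\varepsilon})$ around $|n|=r\mu$, using the trichotomy of $\tau_{n,k}$ and Corollary~\ref{approximation} to localize the transition zone. Two small remarks: the phrase ``the entire column lies in the small regime $\tau_{n,k}=1/4$'' for $|n|>r\mu+C\mu^{1/3+\varepsilon}$ is literally false (for $k$ large enough one always has $\tau_{n,k}=0$); what you need and what actually holds is that the \emph{unique candidate} $k$ in the interval $(\mu G(|n|/\mu),\,\mu G(|n|/\mu)+1/4]$ satisfies $\tau_{n,k}=1/4$, by the same computation $x_{n,k}=\mu+O(1)$ you use in the other case. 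Also, the concern you raise at the end is unwarranted: since every $k$-interval has length at most $1/4<1$, it meets $\mathbb{Z}$ in at most one point automatically, so the $O(1)$ per-column bound in the edge zone needs nothing about $\psi$ or the zero spacing.
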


\begin{proof} In view of the definition of $\tau_{n,k}$, moving the points $(n, k-\tau_{n,k})$ down to $(n, k-1/4)$ can possibly get some of these points in the domain $\mu\mathcal{D}$ but no points out. Hence the difference between $\mathcal{N}_{\mathcal{D}}^{u}(\mu)$ and $\mathcal{N}_{\mathcal{D}}(\mu)$ is equal to twice (due to the symmetry) the number of points $(n, k-\tau_{n,k})$ in the band $\mathcal{B}_{R\mu}$ that are moved in the domain  $\mu\mathcal{D}$. There are three types of points $(n, k-\tau_{n,k})$ in this band:
\begin{enumerate}
\item $(n,k)$'s, which correspond to the case $\tau_{n,k}=0$ and definitely get in $\mu\mathcal{D}$;
\item $(n, k-\tau_{n,k})$'s with $0<\tau_{n,k}<1/4$, which may get in $\mu\mathcal{D}$;
\item $(n, k-1/4)$'s, which correspond to the case $\tau_{n,k}=1/4$ and are not moved.
\end{enumerate}

Concerning these three types of points, one key observation is that the points of the first type are all above the line passing through $O$ and $J$ (see Figure \ref{SymmH}) while the points of the third type are all below. This is because of the facts that if $rx_{n,k}\geq n+n^{1/3+\varepsilon}$ then
$k/n>G(r)/r$ and if $rx_{n,k}\leq n-n^{1/3+\varepsilon}$ then $k/n<G(r)/r$. We only prove the former fact while the latter one's proof is similar. Indeed, by Theorem \ref{thm111} and the monotonicity of $G$ if $rx_{n,k}\geq (1+c)n$ then
\begin{equation*}
\frac{k}{n}=\frac{G(n/x_{n,k})}{n/x_{n,k}}+O\left(\frac{1}{n(n+k)}\right)\geq \frac{G\left(\frac{r}{1+c}\right)}{\frac{r}{1+c}}+O\left(\frac{1}{n(n+k)}\right),
\end{equation*}
which is greater than $G(r)/r$ since $n+k\asymp \mu$. If $n+n^{1/3+\varepsilon}\leq rx_{n,k}<(1+c)n$ similarly we have
\begin{equation*}
\frac{k}{n}=\frac{G(n/x_{n,k})}{n/x_{n,k}}+O\left(n^{-1-\frac{3}{2}\varepsilon} \right)\geq \frac{G\left(\frac{r}{1+n^{-2/3+\varepsilon}}\right)}{\frac{r}{1+n^{-2/3+\varepsilon}}}+O\left(n^{-1-\frac{3}{2}\varepsilon}\right).
\end{equation*}
By the mean value theorem and a straightforward computation of $(G(x)/x)'$, we have
\begin{equation*}
0<\frac{G\left(\frac{r}{1+n^{-2/3+\varepsilon}}\right)}{\frac{r}{1+n^{-2/3+\varepsilon}}}-\frac{G(r)}{r}\gg n^{-2/3+\varepsilon}.
\end{equation*}
Combining the last two inequalities yields the desired one.

Another key observation is that any point $(n, k-\tau_{n,k})$ of the second type in the band $\mathcal{B}_{R\mu}$ is such that $|n-r\mu|\leq C'\mu^{1/3+\varepsilon}$  for some large constant $C'$.  Indeed, by Corollary \ref{approximation}, if $n-n^{1/3+\varepsilon}< rx_{n,k}< n+n^{1/3+\varepsilon}$ then
\begin{equation*}
x_{n, k}=F(n,k-\tau_{n,k})+O\left(n^{-2/3+2.5\varepsilon}\right)=\mu+O(1).
\end{equation*}
Plugging this formula of $x_{n,k}$ into the above inequality of $x_{n,k}$ yields the desired range of $n$.

As a result, the points $(n,k-\tau_{n,k})$ in the band $\mathcal{B}_{r\mu-C'\mu^{1/3+\varepsilon}}$ are only of the first type, which definitely get in $\mu\mathcal{D}$. By \eqref{error-in-band} its number is equal to $r\mu/4+\mathcal{E}(\mu)+O(\mu^{1/3+\varepsilon})$. Some of the points $(n,k-\tau_{n,k})$ with $|n-r\mu|< C'\mu^{1/3+\varepsilon}$ may get in $\mu\mathcal{D}$. Its number is of size $O(\mu^{1/3+\varepsilon})$.  The points $(n,k-\tau_{n,k})$ in $\mathcal{B}_{R\mu}\setminus \mathcal{B}_{r\mu+C'\mu^{1/3+\varepsilon}}$ are only of the third type and not moved. To sum up,  the number of points $(n, k-\tau_{n,k})$ in the band $\mathcal{B}_{R\mu}$ that are moved in the domain  $\mu\mathcal{D}$ is $r\mu/4+\mathcal{E}(\mu)+O(\mu^{1/3+\varepsilon})$. This finishes the proof.
\end{proof}

Combining Proposition \ref{difference1} and \ref{difference2} immediately yields that
\begin{theorem} \label{reduction}
\begin{align*}
\left|\mathscr{N}_{\mathscr{D}}(\mu)-\mathcal{N}_{\mathcal{D}}^{u}(\mu)+\frac{1}{2}r\mu \right|
&\leq \mathcal{N}_{\mathcal{D}}^{u}\left(\mu^+\right)-
\mathcal{N}_{\mathcal{D}}^{u}\left(\mu^-\right)\\
&\quad +2\left(\mathcal{E}\left(\mu^-\right)-\mathcal{E}\left(\mu^+\right)\right)+O\left(\mu^{0.6} \right)
\end{align*}
with $\mu^+=\mu+C\mu^{-0.4}$ and $\mu^-=\mu-C\mu^{-0.4}$.
\end{theorem}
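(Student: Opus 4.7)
The plan is essentially mechanical: substitute Proposition \ref{difference2} into Proposition \ref{difference1} and simplify. First I would rewrite Proposition \ref{difference2} in the inverted form
$$\mathcal{N}_{\mathcal{D}}(\nu) = \mathcal{N}_{\mathcal{D}}^{u}(\nu) - \frac{r\nu}{2} - 2\mathcal{E}(\nu) + O\!\left(\nu^{1/3+\varepsilon}\right),$$
and apply it at $\nu = \mu, \mu^{+}, \mu^{-}$. Subtracting the identities at $\mu^{+}$ and $\mu^{-}$ gives
$$\mathcal{N}_{\mathcal{D}}(\mu^{+}) - \mathcal{N}_{\mathcal{D}}(\mu^{-}) = \mathcal{N}_{\mathcal{D}}^{u}(\mu^{+}) - \mathcal{N}_{\mathcal{D}}^{u}(\mu^{-}) - \frac{r}{2}(\mu^{+}-\mu^{-}) + 2\bigl(\mathcal{E}(\mu^{-}) - \mathcal{E}(\mu^{+})\bigr) + O\!\left(\mu^{1/3+\varepsilon}\right).$$
Since $\mu^{+}-\mu^{-} = 2C\mu^{-0.4}$, the linear term is $O(\mu^{-0.4})$, and together with $O(\mu^{1/3+\varepsilon})$ (for $\varepsilon$ small enough) it is absorbed into the $O(\mu^{0.6})$ already present on the right-hand side of Proposition \ref{difference1}.

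Next I would substitute into the left-hand side of Proposition \ref{difference1}, using the identity at $\nu=\mu$, to obtain
$$\mathscr{N}_{\mathscr{D}}(\mu) - \mathcal{N}_{\mathcal{D}}(\mu) = \mathscr{N}_{\mathscr{D}}(\mu) - \mathcal{N}_{\mathcal{D}}^{u}(\mu) + \frac{r\mu}{2} + 2\mathcal{E}(\mu) + O\!\left(\mu^{1/3+\varepsilon}\right).$$
Combining the two substitutions yields a preliminary bound
$$\left|\mathscr{N}_{\mathscr{D}}(\mu) - \mathcal{N}_{\mathcal{D}}^{u}(\mu) + \frac{r\mu}{2} + 2\mathcal{E}(\mu) + O\!\left(\mu^{1/3+\varepsilon}\right)\right| \leq \mathcal{N}_{\mathcal{D}}^{u}(\mu^{+}) - \mathcal{N}_{\mathcal{D}}^{u}(\mu^{-}) + 2\bigl(\mathcal{E}(\mu^{-}) - \mathcal{E}(\mu^{+})\bigr) + O\!\left(\mu^{0.6}\right),$$
and a single application of the triangle inequality to pull $2\mathcal{E}(\mu) + O(\mu^{1/3+\varepsilon})$ outside the absolute value (absorbing it into the right-hand side) produces exactly the claimed inequality.

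There is no genuine obstacle in this step; all the analytic content has already been carried out in Proposition \ref{difference1} (spectrum-to-lattice reduction based on Corollary \ref{approximation}) and Proposition \ref{difference2} (the band comparison between $\mathcal{N}_{\mathcal{D}}$ and $\mathcal{N}_{\mathcal{D}}^{u}$). The only bookkeeping points to verify are that $\varepsilon$ can be chosen small enough that $\mu^{1/3+\varepsilon} \ll \mu^{0.6}$ and that the linear residue from $\mu^{+}-\mu^{-}$ is negligible. With these, the theorem follows directly, and in this form it is ready to be fed into the lattice-point estimates of Section \ref{LatticeSec}.
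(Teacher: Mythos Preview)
Your approach is exactly the paper's: the authors simply write ``Combining Proposition \ref{difference1} and \ref{difference2} immediately yields'' and leave the substitution to the reader, which is what you carry out.

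There is, however, one bookkeeping slip in your final sentence. After the triangle inequality you obtain
\[
\left|\mathscr{N}_{\mathscr{D}}(\mu)-\mathcal{N}_{\mathcal{D}}^{u}(\mu)+\tfrac{r\mu}{2}\right|
\le \mathcal{N}_{\mathcal{D}}^{u}(\mu^{+})-\mathcal{N}_{\mathcal{D}}^{u}(\mu^{-})
+2\bigl(\mathcal{E}(\mu^{-})-\mathcal{E}(\mu^{+})\bigr)
+2|\mathcal{E}(\mu)|+O(\mu^{0.6}),
\]
with an extra $2|\mathcal{E}(\mu)|$ that cannot be ``absorbed'' into anything on the right at this stage of the argument: the bound $\mathcal{E}(\mu)=O(\mu^{2/3})$ (or the Huxley-type improvement) is only established later in Corollary \ref{no-in-band}. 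So your claim that the substitution ``produces exactly the claimed inequality'' is not literally correct. Either the theorem statement itself is missing a $2|\mathcal{E}(\mu)|$ term, or one must forward-reference the later bound on $\mathcal{E}$. For the eventual application to Theorem \ref{specthm} this is harmless, since all three $\mathcal{E}$ values are bounded there by Corollary \ref{no-in-band} anyway; but you should flag the discrepancy rather than assert the identities match exactly.
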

Thus we have transferred the study of $\mathscr{N}_{\mathscr{D}}(\mu)$ to those of
$\mathcal{N}_{\mathcal{D}}^{u}(\mu)$ and $\mathcal{E}\left(\mu\right)$, which will be done in the following section.


\section{Lattice Counting and Proof of Theorem \ref{specthm}}\label{LatticeSec}

In this section we study the two associated lattice point problems, $\mathcal{N}_{\mathcal{D}}^{u}(\mu)$ and $\mathcal{E}\left(\mu\right)$, defined in \eqref{lattice-pro2} and \eqref{error-in-band} respectively. Theorem \ref{specthm} follows directly from Theorem  \ref{reduction}, Theorem \ref{theorem:no-in-D} and Corollary \ref{no-in-band}.

Recall that
\[
\mathcal{N}_{\mathcal{D}}^{u}(\mu)=\#\left(\mu\mathcal{D}\cap \left\{(m,n-1/4) : (m,n)\in\mathbb{Z}^2\right\}\right)
\]
denotes the number of points in the shifted lattice $\mathbb{Z}^2-(0,1/4)$ which lie in $\mu\mathcal{D}$. The domain $\mathcal{D}$, defined in Section \ref{reduction-sec} (see Figure \ref{SymmH}), has an area
\begin{equation*}
|\mathcal{D}|=\frac{1}{4}\left(R^2-r^2\right)\,.
\end{equation*}

\begin{theorem}\label{theorem:no-in-D} Let $0\leq r < R$. If the boundary curve of $\mathcal{D}$ has  a tangent in $J$ with rational slope (i.e. $\pi^{-1}\arccos(r/R)\in \mathbb{Q}$), then
\[
\mathcal{N}_{\mathcal{D}}^{u}(\mu)=|\mathcal{D}|\mu^2-\frac{R}{2}\mu+O\left(\mu^{\theta}(\log \mu)^\Theta\right),
\]
where
\begin{equation}\label{definition-theta}
\theta=\frac{131}{208}\approx 0.6298\,,\qquad\Theta=\frac{18627}{8320}\approx 2.2388\,.
\end{equation}
In case of an irrational slope the asymptotics remains true with the much weaker error term $O(\mu^{2/3})$.
\end{theorem}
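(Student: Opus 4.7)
My plan is to express $\mathcal{N}^u_{\mathcal{D}}(\mu)$ as a column sum of floor values, extract the area and linear terms via a careful Euler--Maclaurin analysis, and then bound the remaining rounding-error sum using Huxley's machinery from \cite{Huxley:2003}. By the symmetry of $\mathcal{D}$ about the $y$-axis, the point $(m,n-1/4)$ lies in $\mu\mathcal{D}$ exactly when $|m|\le\mu R$ and $1\le n\le \mu G(|m|/\mu)+1/4$, so
\[
\mathcal{N}^u_{\mathcal{D}}(\mu)=\sum_{|m|\le\mu R}\bigl\lfloor\mu G(|m|/\mu)+1/4\bigr\rfloor.
\]
Writing $\lfloor t\rfloor=t-\tfrac12-\psi(t)$ with the sawtooth $\psi(t)=\{t\}-\tfrac12$ splits this into a Riemann-type sum $\sum_{|m|\le\mu R}\mu G(|m|/\mu)$, a constant contribution $-\tfrac14(2\lfloor\mu R\rfloor+1)$, and the rounding-error sum
\[
E(\mu):=\sum_{|m|\le\mu R}\psi\bigl(\mu G(|m|/\mu)+1/4\bigr).
\]

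Next I would evaluate the Riemann-type sum by Euler--Maclaurin, keeping track of the integrable singularities of $G$ (the one-sided blow-up $G''(x)\asymp|r-x|^{-1/2}$ as $x\to r^{-}$ and the two-sided blow-up $G''(x)\asymp(R-x)^{-1/2}$ as $x\to R^{-}$). Together with $\int_{-R}^{R} G(|x|)\,dx = 2\cdot(R^2-r^2)/8 = |\mathcal{D}|/2\cdot 2 = |\mathcal{D}|$, this yields
\[
\sum_{|m|\le\mu R}\mu G(|m|/\mu) = |\mathcal{D}|\mu^2 + (\text{boundary correction}) + O(\mu^{1/2}),
\]
where the boundary/trapezoidal correction combines with $-\tfrac14(2\lfloor\mu R\rfloor+1)$ to produce exactly the linear term $-\tfrac{R}{2}\mu$. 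It thus remains to bound $E(\mu)$ by $O(\mu^{\theta}(\log\mu)^{\Theta})$ in the rational case and by $O(\mu^{2/3})$ in the irrational case.

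For the bound on $E(\mu)$, I would partition $[-\mu R,\mu R]$ according to the distance from the singular abscissae $\pm\mu r$ and $\pm\mu R$. On the generic pieces where $m/\mu$ stays away from these four points, the phase $F(m)=\mu G(m/\mu)+1/4$ has $F''(m)\asymp\mu^{-1}$ with all higher derivatives behaving regularly, so Huxley's rounding-error sum estimate from \cite{Huxley:2003} contributes $O(\mu^{\theta}(\log\mu)^{\Theta})$ on each piece. Near the cusps at $\pm\mu R$ the tangent slope is $G'(R)=0\in\mathbb{Q}$, and Huxley's \cite[Proposition~3]{Huxley:2003}, tailored to rounding-error sums at boundary points with rational tangent slope (the classical $3/2$-power cusp), applies to give the same bound. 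Under the rationality hypothesis $\pi^{-1}\arccos(r/R)\in\mathbb{Q}$, the tangent at $J=(r,G(r))$ (slope $-\pi^{-1}\arccos(r/R)$) is likewise rational and Huxley's Proposition~3 applies at $J$ as well, though separate local analysis is needed on each side of $\mu r$ (the right side being a mild kink in the curvature, the left side a one-sided $|r-x|^{-1/2}$ cusp in $G''$).

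The main obstacle is precisely this treatment near $J$: verifying the hypotheses of Huxley's Proposition~3 on both sides of $\mu r$, in particular producing the local chart in which the singular behavior takes the canonical cusp form Huxley requires, and reconciling the one-sided blow-up of $G''$ with the two-sided setting of the Proposition. Without the rationality hypothesis, Huxley's Proposition~3 is unavailable at $J$ and I would instead dyadically decompose $\{|m-\mu r|\asymp 2^{-j}\mu\}$, apply van der Corput's classical second-derivative test to each piece (where $F''\asymp 2^{j/2}\mu^{-1/2}$), and sum over $j$ to obtain only the weaker $O(\mu^{2/3})$ bound, which is the origin of the fallback estimate in the theorem. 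Assembling the contributions of all pieces then gives the stated asymptotic.
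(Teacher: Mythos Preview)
Your extraction of the area and linear terms via column counting is fine, and on pieces where $m/\mu$ stays away from $r$ and $R$ the column phase fits Huxley's framework. The gap is at the cusps. Huxley's Proposition~3 (the paper quotes Case~A) is not a result about cusps with rational tangent; it is a rounding-error bound for a phase written as $NF(m/M)$ with $C_j^{-1}\le|F^{(j)}|\le C_j$ for $j=1,2,3$, plus a range condition linking $M$ and $N$. Near $J$ on the side $x<r$, your column phase $h(m)=\mu G(m/\mu)+\tfrac14$ has $h'(m)=G'(m/\mu)\to G'(r)\ne 0$ while $h''(m)\asymp\mu^{-1}(r-m/\mu)^{-1/2}\to\infty$. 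No choice of $N$ reconciles these: normalizing so that $|F'|\asymp 1$ forces $|F''|\asymp(r-m/\mu)^{1/2}\to 0$, while normalizing so that $|F''|\asymp 1$ forces $|F'|\asymp(r-m/\mu)^{-1/2}\to\infty$. This is exactly the obstruction the paper names in its remark after the Proposition (``$|F'(x)|$ becomes large if we count lattice points near $\mu J$ along lines parallel to the axes''), and the rationality of $G'(r)$ does nothing to the column phase itself.

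What the rationality buys is the freedom to count along lines \emph{parallel to the tangent}, so that the linear part $G'(r)(x-r)$ is absorbed into the counting direction and only the self-similar $(r-x)^{3/2}$ piece survives in the phase. The paper implements this by splitting $\mathcal{D}\cap[0,\infty)^2$ into $\mathcal{D}_1$ (below $y=G(r)$) and $\mathcal{D}_2$ (above). In $\mathcal{D}_1$ it counts along rows via the inverse $H=(G|_{[r,R]})^{-1}$; near $P_2$ (horizontal tangent) one has $H^{(j)}(y)\asymp y^{2/3-j}$, and the normalization $N=M^{2/3}\mu^{1/3}$ puts $(M,N)$ inside Huxley's admissible range for every $M\in[\mu^{\theta},\mu]$. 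In $\mathcal{D}_2$, writing $G'(r)=-a/q$, it counts along the lines $ax+q(y+c)=t$ and works with the implicit function $T$ defined by $G(T(y))+(a/q)T(y)+c/\mu=y$, for which again $T^{(j)}(y)\asymp(\gamma-y)^{2/3-j}$ and the same normalization applies. Your proposed ``verification of the hypotheses of Proposition~3 on both sides of $\mu r$'' for the column phase cannot succeed as stated; one has to change the counting direction at each cusp, which is precisely the paper's $\mathcal{D}_1$--$\mathcal{D}_2$ decomposition and the reason the rational-slope hypothesis enters at all.
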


\begin{remark}
If the tangent in J has rational slope (this includes the case $r=0$) the error term is of the same quality as the best published result in the circle problem due to Huxley \cite{Huxley:2003}.
The linear term can be explained as follows. To every lattice point one can associate an axes parallel square of volume 1 with center in the lattice point. Every such square contributes to $|\mathcal{D}|\mu^2$ the volume of its intersection with $\mu\mathcal{D}$. The points $(n,-1/4)$ with $|n|\leq R\mu$ are not counted in $\mathcal{N}_{\mathcal{D}}^{u}(\mu)$, but contribute to the volume  $\frac12R\mu+O(1)$.
\end{remark}

Since the boundary of $\mathcal{D}$ contains points with infinite curvature (the points $J$, $P_2$ and $J'$,  $P_2'$) standard results are not directly applicable. But see \cite{nowak:1980} for a lattice point counting problem in a non-convex domains with cusps and unbounded curvature. Our proof uses the following deep result of M.N. Huxley.
\begin{proposition}\label{proposition:Huxley} Let $M,N,C_1,C_2,C_3,C_4\geq 2$ be real parameters and $F:[1,2]\to\mathbb{R}$ a three times continuously differentiable function satisfying
\[C_j^{-1}\leq|F^{(j)}(x)|\leq C_j\]
for $j=1,2,3$. Denote by $\rho(x)=[x]-x+1/2$ the row-of-teeth function and by $\theta$, $\Theta$ the constants defined in (\ref{definition-theta}). Then there is a constant $B$ which depends only on $C_1,C_2,C_3$ and $C_4$, such that
\begin{equation*}
\Big|\sum_{M\leq m\leq M_2\leq 2M}\rho\left(NF\left(\frac mM\right)\right)\Big|\leq B (MN)^{\theta/2}\log(MN)^{\Theta}
\end{equation*}
provided that
\begin{equation}\label{condition:Huxley}
C_4^{-1}(MN)^{\frac{141}{10}}\log(MN)^{\frac{1083}{280}}\leq M^{\frac{164}{5}}\leq C_4(MN)^{\frac{181}{10}}\log(MN)^{\frac{2907}{1400}}\,.
\end{equation}

\end{proposition}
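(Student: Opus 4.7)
My plan is to express $\mathcal{N}_{\mathcal{D}}^{u}(\mu)$ as a column sum in $m$, extract the two main terms by Euler--Maclaurin, and bound the residual sawtooth sum by dyadic decomposition together with Proposition \ref{proposition:Huxley}. By symmetry of $\mathcal{D}$ about the $y$-axis and the identity $\lfloor y\rfloor=y-\tfrac12+\rho(y)$ with $\rho(y)=[y]-y+\tfrac12$,
\[
\mathcal{N}_{\mathcal{D}}^{u}(\mu)=\lfloor\mu G(0)+\tfrac14\rfloor+2\sum_{1\le m\le R\mu}\lfloor\mu G(m/\mu)+\tfrac14\rfloor.
\]
Applying Euler--Maclaurin to $\sum_{m=1}^{\lfloor R\mu\rfloor}\mu G(m/\mu)$ and using that $G$ has integrable second derivative, boundary values $G(0)=(R-r)/\pi$ and $G(R)=G'(R)=0$ with cubic vanishing $G(x)\sim c(R-x)^{3/2}$ near $R$, the smooth part gives $2\mu^{2}\int_{0}^{R}G=|\mathcal{D}|\mu^{2}$ up to $O(1)$; the constant shift $-\tfrac14=\tfrac14-\tfrac12$ per term contributes $-2\lfloor R\mu\rfloor/4=-R\mu/2+O(1)$. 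The theorem is therefore reduced to proving
\[
S(\mu):=\sum_{1\le m\le R\mu}\rho\bigl(\mu G(m/\mu)+\tfrac14\bigr)\ll\mu^{\theta}(\log\mu)^{\Theta}
\]
in the rational case, and $\ll\mu^{2/3}$ otherwise.

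Next I decompose $S(\mu)$ according to the distance of $m$ to the singular abscissas $r\mu$ (the point $J$) and $R\mu$ (the point $P_{2}$). A direct computation gives $|G''(x)|\asymp(r-x)^{-1/2}$ as $x\to r^{-}$, $|G''(x)|\asymp(R-x)^{-1/2}$ as $x\to R^{-}$, and $|G''|\asymp 1$ on any compact set avoiding these; crucially, $G''(r^{+})$ is finite, so only the \emph{left} side of $r\mu$ needs dyadic treatment. Cut $[1,R\mu]$ into (a) dyadic bands $2^{k}\le r\mu-m<2^{k+1}$ and $2^{k}\le R\mu-m<2^{k+1}$ for $\mu^{\alpha}\le 2^{k}\le cR\mu$, (b) the remaining interior intervals where $|G''|\asymp 1$, and (c) the tight cores $|m-r\mu|\le\mu^{\alpha}$, $|m-R\mu|\le\mu^{\alpha}$, with $\alpha$ chosen slightly smaller than $\theta$. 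On each smooth piece of type (a) or (b), affinely rescale $m=Mu$, $u\in[1,2]$, and set $NF(u):=\mu G(Mu/\mu)$ with $N\asymp M|G'(m/\mu)|$; the explicit forms of $G'$, $G''$ then yield $C_{j}^{-1}\le|F^{(j)}|\le C_{j}$ for $j=1,2,3$ and $MN\ll\mu^{2}$, and verify the size condition \eqref{condition:Huxley}. Proposition \ref{proposition:Huxley} thereby bounds each such piece by $(MN)^{\theta/2}(\log\mu)^{\Theta}\ll\mu^{\theta}(\log\mu)^{\Theta}$; summing over the $O(\log\mu)$ pieces (absorbing the extra logarithm by a mild enlargement of $\Theta$) handles the contributions of (a) and (b).

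The hard part is the tight cores. Near $P_{2}$, where the tangent has slope $0\in\mathbb{Q}$, the local expansion $G(x)\sim c(R-x)^{3/2}$ reduces the contribution to $\sum_{k\le\mu^{\alpha}}\rho(ck^{3/2}/\sqrt{\mu}+\tfrac14)$; a further dyadic split in $k$ followed by rescaling to the model $F(u)=u^{3/2}$ on $[1,2]$ puts this sum into the framework of Proposition \ref{proposition:Huxley}, giving a $\mu^{\theta}(\log\mu)^{\Theta}$-size bound. Near $J$ the identical scheme applies \emph{provided} $-\pi^{-1}\arccos(r/R)\in\mathbb{Q}$: rationality of the tangent slope permits an integer change of lattice basis that aligns the tangent with a coordinate axis, after which the local sum again reduces to a Huxley-type sum of the same strength. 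In the irrational case Proposition \ref{proposition:Huxley} is no longer applicable near $J$, and one must fall back on the classical van der Corput second-derivative test, yielding $\ll L\lambda^{1/2}+\lambda^{-1/2}$ on a piece of length $L$ with $\lambda\asymp|G''|$; optimal balancing across the dyadic pieces around $J$ produces only the weaker error $O(\mu^{2/3})$ claimed in the theorem. The trivial bound $O(\mu^{\alpha})$ on the innermost cores is consistent with both targets, and collecting all estimates gives the stated asymptotics.
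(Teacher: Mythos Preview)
Your write-up does not address the stated Proposition at all. Proposition~\ref{proposition:Huxley} is Huxley's rounding-error estimate; the paper's ``proof'' is simply the citation ``This is Case~A of Proposition~3 in \cite{Huxley:2003}.'' What you have written is an attempted proof of Theorem~\ref{theorem:no-in-D}, which \emph{uses} Proposition~\ref{proposition:Huxley} as a black box. So there is a basic mismatch between the statement you were asked to prove and the argument you wrote.

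Taking your text as an attempt at Theorem~\ref{theorem:no-in-D}, there is a genuine gap. You write the whole count as a column sum $S(\mu)=\sum_{m}\rho(\mu G(m/\mu)+\tfrac14)$ and then claim that on each dyadic band one can rescale to an $F$ with $|F^{(j)}|\asymp 1$ for $j=1,2,3$. Near $J$ this is false: for $x\to r^{-}$ one has $G'(x)\to G'(r)\neq 0$ while $|G''(x)|\asymp (r-x)^{-1/2}\to\infty$, so on the band $2^{k}\le r\mu-m<2^{k+1}$ the rescaled first and second derivatives are of orders $2^{k}/N$ and $2^{3k/2}\mu^{-1/2}/N$ respectively, which cannot both be $\asymp 1$ unless $2^{k}\asymp\mu$. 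This is exactly the obstruction flagged in the remark following Proposition~\ref{proposition:Huxley}: Huxley's bound, unlike van~der~Corput's, carries a first-derivative hypothesis that fails if you count near $\mu J$ along axis-parallel lines. Your sentence ``rationality of the tangent slope permits an integer change of lattice basis'' is the right instinct, but it cannot be executed inside the already-formed sum $\sum_{m}\rho(\mu G(m/\mu)+\tfrac14)$; a unimodular basis change alters which lattice points lie in which column, so one must go back to the lattice count itself and re-express it along the new direction before extracting a $\rho$-sum.

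The paper does precisely this. It splits $\mathcal{D}$ horizontally into $\mathcal{D}_1$ (below height $G(r)$) and $\mathcal{D}_2$ (above). In $\mathcal{D}_1$ it counts along \emph{horizontal} lines, so the relevant function is the inverse $H=G^{-1}$ on $[r,R]$; Lemma~\ref{Lemma-function-H} gives $H^{(j)}(y)\asymp y^{2/3-j}$, which is exactly the homogeneity needed so that $F(x)=(\mu/M)^{2/3}H(Mx/\mu-c/\mu)$ satisfies $|F^{(j)}|\asymp 1$ on every dyadic block. In $\mathcal{D}_2$, when $G'(r)=-a/q\in\mathbb{Q}$, it counts along lines parallel to the tangent at $J$, introduces the shear-corrected inverse $T$ via \eqref{definition-T}, and Lemma~\ref{Lemma-function-T} yields $T^{(j)}(y)\asymp(\gamma-y)^{2/3-j}$, again matching Huxley's hypotheses after rescaling. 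Your column-sum approach works near $P_2$ (where the tangent is horizontal) and on the bulk, but to get the Huxley exponent near $J$ you must abandon the vertical $\rho$-sum there and carry out the tangent-direction count explicitly, as the paper does.
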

\begin{proof} This is Case A of Proposition 3 in \cite{Huxley:2003}.
\end{proof}

\begin{remark} In contrast to van der Corput's classical estimate (\ref{corput-second-derivative-estimate}) the proposition uses a condition on the first derivative. In our application $|F'(x)|$ becomes large if we count lattice points near to the boundary point $\mu J$ along lines parallel to the axes. To avoid this we count them on lines parallel to the tangent. This is only possible if the tangent in $J$ has rational slope.
\end{remark}

\begin{proof}[Proof of Theorem \ref{theorem:no-in-D}]
Slightly more general we count points in the shifted lattice $\mathbb{Z}^2-(0,c)$ with $c\in[0,1/2)$.
The number $\mathcal{N}_{\mathcal{D}}^{u}(\mu)$ is twice the number of shifted lattice points in the positive quadrant, if points on the $y$-axis are counted with weight 1/2. Divide  $\mathcal{D}\cap[0,\infty)^2$ in domains
\begin{align*}
\mathcal{D}_1&:=\{(x,y)\in\mathcal{D}: 0\leq x\leq R, 0< y\leq G(r)\},\\
\mathcal{D}_2&:=\{(x,y)\in\mathcal{D}: 0\leq x\leq r,y>G(r)\}.
\end{align*}
See Figure \ref{decompositionD} for these domains.
\begin{figure}[ht]
\centering
\includegraphics[width=0.6\textwidth]{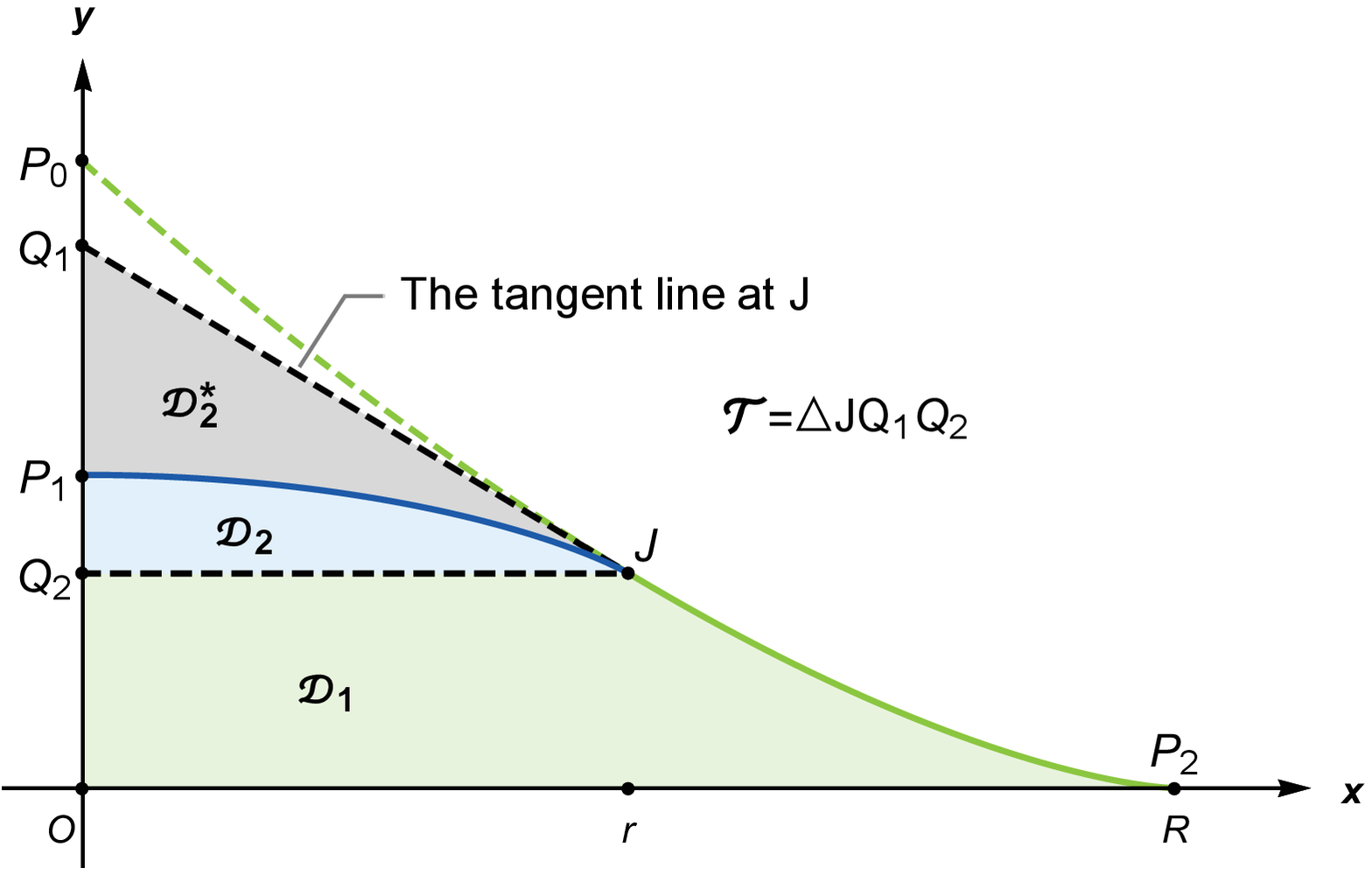} 
\caption{A decomposition of $\mathcal{D}$ in the first quadrant.}
\label{decompositionD}
\end{figure}

The rational case of Theorem \ref{theorem:no-in-D} follows if we prove that
\begin{align}
\mathcal{N}_{\mathcal{D}_1}^{u}(\mu) &=|\mathcal{D}_1|\mu^2-(1/2-c)R\mu+L_{12}+O\left(\mu^{\theta}(\log \mu)^\Theta\right),\label{ND1}\\
\mathcal{N}_{\mathcal{D}_2}^{u}(\mu) &=|\mathcal{D}_2|\mu^2-L_{12}+O\left(\mu^{\theta}(\log \mu)^\Theta\right),\label{ND2}
\end{align}
where $L_{12}=\mu r\,\rho(\mu G(r)+c)$ describes the contribution of the line segment separating $\mathcal{D}_1$ from $\mathcal{D}_2$. While (\ref{ND1}) is true in general, we prove (\ref{ND2}) in the irrational case only with the weaker error term $O(\mu^{2/3})$.

In $\mu\mathcal{D}_1$ we count lattice points along lines parallel to the $x$-axis. Denote by $H:[0,G(r)]\to[r,R]$ the inverse function of $G$ restricted to $[r,R]$. Since points on the $y$-axis are counted with weight 1/2, and $[x]+1/2=x+\rho(x)$, one finds
\begin{align*}
\mathcal{N}_{\mathcal{D}_1}^{u}(\mu)&=\sum_{0<n-c\leq \mu G(r)}\!\Big(\Big[\mu H\left(\frac{n-c}{\mu}\right)\Big]+\frac12\Big)\\
&=\sum_{\frac12<n\leq \mu G(r)+c}\!\mu H\left(\frac{n-c}{\mu}\right)+\!\!\sum_{0<n\leq \mu G(r)+c}\rho\left(\mu H\left(\frac{n-c}{\mu}\right)\right).
\end{align*}
Euler's summation formula
\begin{align*}
\sum_{a<n\leq b}f(n)=\int_a^b f(x)\,\textrm{d}x+\rho(b)f(b)-\rho(a)f(a)-\int_a^bf'(x)\rho(x)\,\textrm{d}x
\end{align*}
is used to calculate the first sum. The first integral gives the main term
\begin{align*}
\int_{1/2}^{\mu G(r)+c}\mu H\big(\frac{x-c}{\mu}\big)\,\textrm{d}x&=\mu^2\int_{(1/2-c)/\mu}^{G(r)}H(x)\,\textrm{d}x\\
&=|\mathcal{D}_1|\mu^2-(1/2-c)R\mu+O(1)\,.
\end{align*}
By the second mean value theorem and Lemma \ref{Lemma-function-H} the second integral is bounded by
\begin{align*}
\int_{1/2}^{\mu G(r)+c}H'\big(\frac{x-c}{\mu}\big)\,\rho(x)\,\textrm{d}x\ll\sup_{(1/2-c)/\mu\leq y\leq G(r)}\left|H'(y)\right|
\ll \mu^{1/3}\,.
\end{align*}
Together we obtain
\begin{align*}
\mathcal{N}_{\mathcal{D}_1}^{u}(\mu) &=|\mathcal{D}_1|\mu^2-(1/2-c)R\mu+L_{12}\\
&\qquad +\sum_{0<n\leq \mu G(r)+c}\rho\left(\mu H\left(\frac{n-c}{\mu}\right)\right)+O(\mu^{1/3})\,.
\end{align*}
For $n\leq V:=\mu^{\theta}$ the $\rho$-sum is estimated trivially. This contributes $O(\mu^\theta)$ to $\mathcal{N}_{\mathcal{D}_1}^{u}(\mu)$. The remaining sum is divided in sums of the form
\begin{align}\label{psisum}
\sum_{M\leq m\leq M'\leq 2M}\rho\left(\mu H\left(\frac{m-c}{\mu}\right)\right)\,,
\end{align}
where $M=2^jV\ll\mu$. To apply Proposition \ref{proposition:Huxley} set
\[\mu H\left(\frac{m-c}{\mu}\right)=NF\left(\frac{m}{M}\right)\quad\mbox{with}\quad F(x)=\left(\frac{\mu}{M}\right)^{2/3}H\left(\frac M\mu x-\frac{c}{\mu}\right)\]
and $N=M^{2/3}\mu^{1/3}$. By Lemma \ref{Lemma-function-H} $|F^{(j)}(x)|\asymp 1$ for $x\in[1,2]$ and $j=1,2,3$. The condition (\ref{condition:Huxley}) is satisfied since $V\leq M\ll \mu$. This yields the bound $(M^{5/3}\mu^{1/3})^{\theta/2}(\log\mu)^{\Theta}$ for (\ref{psisum}). Summing over $M=2^jV\ll\mu$ gives (\ref{ND1}). Note that this already completes the treatment of the special case $r=0$.

If $r>0$ we have to deal with $\mathcal{N}_{\mathcal{D}_2}^{u}(\mu)$. First we assume that the tangent in $J$ has rational slope. Hence $G'(r)=-a/q<0$ with $a$, $q$ relatively prime. The number of shifted lattice points in $\mu\mathcal{D}_2$ is equal to the number of shifted lattice points in the triangle $\mu\mathcal{T}$ minus the number of shifted lattice points in $\mu\mathcal{D}^*_2$, where
\begin{align*}
\mathcal{T}&:=\big\{(x,y)\in\mathbb{R}^2: 0\leq x<r, G(r)< y\leq G(r)+\frac aq(r-x)\big\},\\
\mathcal{D}^*_2&:=\big\{(x,y)\in\mathbb{R}^2: 0\leq x<r,G(x)< y\leq G(r)+\frac aq(r-x)\big\}.
\end{align*}
In case of $\mu\mathcal{T}$ and $\mu\mathcal{D}_2^*$ it is easier to count points on the $y$-axis with full weight. Then
\begin{equation}\label{label-ND2}
\mathcal{N}_{\mathcal{D}_2}^{u}(\mu)=\mathcal{N}_{\mathcal{T}}^{u}(\mu)-\mathcal{N}_{\mathcal{D}_2^*}^{u}(\mu)-\mu (G(0)-G(r))/2+O(1)\,.
\end{equation}

In $\mu\mathcal{D}_2^*$ we count points along the lines $g_t: ax+q(y+c)=t$, $t\in \mathbb{Z}$. Note that $g_t$ contains points of the shifted lattice $\mathbb{Z}^2-(0,c)$ if and only if $t$ is an integer.  The line $g_t$ intersects the lower boundary curve of $\mu\mathcal{D}_2^*$ between $(0,\mu G(0))$ and $(\mu r,\mu G(r))$ in a unique point if $t\in[\mu q\beta,\mu q\gamma]$, where
\[\beta=G(0)+\frac{c}{\mu}\,,\qquad\gamma=G(r)+\frac aq r+\frac{c}{\mu}\,.\]
Define a function $T$ by writing the $x$-coordinate of the intersection point as $\mu T(t/(\mu q))$. The defining equation of $T$ reads
\begin{align}\label{definition-T}
G(T(y))+\frac aq T(y)+\frac{c}{\mu}=y \qquad(y\in[\beta,\gamma])\,.
\end{align}
The strictly increasing function $T$ maps $[\beta,\gamma]$ to $[0,r]$.
For every $t_0\in\{0,\dots,q-1\}$ choose $x_0\in\{0,\dots,q-1\}$ such that $ax_0\equiv t_0\pmod q$. If $t\equiv t_0\pmod q$ the lattice points on $g_t$ are the points $(m,n-c)$ with $m=x_0+qk$, $k\in\mathbb{Z}$.
Hence the number of shifted lattice points in $\mu\mathcal{D}_2^*\cap g_t$ is equal to the number of integers $k$ such that $-x_0/q\leq k<(\mu T(t/(\mu q))-x_0)/q$.
Since the number of integers in $[a,b)$ is $[-a]-[-b]=b-a-\rho(-b)+\rho(-a)$ this number is
\[
\frac{\mu}{q}T\left(\frac{t}{\mu q}\right)-\rho\left(-\frac\mu q T\Big(\frac{t}{\mu q}\Big)+\frac{x_0}q\right)+\rho\left(\frac{x_0}q\right)\,.
\]
This yields
\begin{align}\label{ND2lable1}
\mathcal{N}_{\mathcal{D}_2^*}^{u}(\mu)=\sum_{\mu q\beta<t\leq\mu q \gamma}
\frac{\mu}{q}T\left(\frac{t}{\mu q}\right)-S_1+S_2\,
\end{align}
with
\begin{align*}
S_1&:=\sum_{t_0=0}^{q-1}\sum_{\mu  \beta-t_0/q<k\leq\mu  \gamma-t_0/q}
\rho\left(-\frac\mu q T\left(\frac{k}{\mu }+\frac{t_0}{\mu q}\right)+\frac{x_0}q\right),\\
S_2&:=\sum_{t_0=0}^{q-1}\rho\left(\frac{x_0}{q}\right)\big(\mu(\gamma-\beta)+O(1)\big).\\
\end{align*}
Using the relation
\begin{align}\label{complete-psi-sum}
\sum_{k=0}^{q-1}\rho\big((x+k)/q\big)=\rho(x),
\end{align}
$S_2$ simplifies to
\[S_2=\mu(\gamma-\beta)/2+O(q)\,.\]
Euler's summation formula applied to the first sum in (\ref{ND2lable1}) yields
\[
|\mathcal{D}_2^*|\mu^2+\rho(\mu q\gamma)\frac{\mu}{q}r-\frac1{q^2}\int_{\mu q\beta}^{\mu q \gamma}T'\Big(\frac{x}{\mu q }\Big)\rho(x)\,\textrm{d}x\,.
\]
By the second mean value theorem, Lemma \ref{Lemma-function-T} and (\ref{label-T}) the last integral is bounded by
\[
\int_{\mu q\beta}^{\mu q\gamma-1}T'\big(\frac{x}{\mu q}\big)\,\rho(x)\,\textrm{d}x+\int_{\mu q\gamma-1}^{\mu q\gamma}T'\big(\frac{x}{\mu q}\big)\,\textrm{d}x
\]
\[
\ll \sup_{\beta\leq y\leq\gamma-1/(\mu q)}T'(y)+\mu q\left(T(\gamma)-T\left(\gamma-\frac1{\mu q}\right)\right)\ll\mu^{1/3}\,.
\]
In the inner sum of $S_1$ we estimate the terms with $[\mu \gamma]-V<k\leq \mu  \gamma$ trivially.
The remaining sum is divided in sums of the form
\[
\sum_{[\mu \gamma]-M'\leq t\leq [\mu \gamma]-M}\!\rho\left(-\frac\mu q T\left(\frac{k}{\mu}+\frac{t_0}{\mu q}\right)+\frac{x_0}{q}\right)=\!\sum_{M\leq m\leq M'\leq 2M}\!\rho\left(NF\left(\frac mM\right)\right),
\]
where $M=2^jV\ll \mu$, $N=\mu^{1/3}M^{2/3}q^{-1}$ and
\[F(x)=-\Big(\frac{\mu}{M}\Big)^{2/3}T\left(\gamma-\frac{M}{\mu}x+\frac{c_0}\mu\right)+\frac{x_0}{qN}\qquad(x\in[1,2])\]
with $c_0=t_0/q+[\gamma\mu]-\gamma\mu$.
By Lemma \ref{Lemma-function-T} $|F^{(j)}(x)|\asymp 1$ for $j=1,2,3$. The condition (\ref{condition:Huxley}) of Proposition \ref{proposition:Huxley} is satisfied since $V\leq M\ll \mu$. This yields the bound $(\mu^{1/3}M^{5/3})^{\theta/2}(\log\mu)^{\Theta}$. Summing over $2^jV\ll\mu$ gives $S_2\ll\mu^{\theta}(\log\mu)^\Theta$. Together this proves
\begin{align}\label{ND2-label2}
\mathcal{N}_{\mathcal{D}_2^*}^{u}(\mu)=|\mathcal{D}_2^*|\mu^2+r\frac\mu q\rho(\mu q \gamma)+\frac{\mu}{2}(\gamma-\beta)+O(\mu^{\theta}(\log \mu)^{\Theta})\,.
\end{align}

To evaluate  $\mathcal{N}_{\mathcal{T}}^{u}(\mu)$ we start from
\[
\mathcal{N}_{\mathcal{T}}^{u}(\mu)=\sum_{0\leq n<\mu r}\left(\frac aq(\mu r-n)+\rho\left(\mu\gamma-\frac aq n\right)-\rho\left(\mu G(r)+c\right)\right).
\]
The last sum is $-L_{12}+O(1)$. Using (\ref{complete-psi-sum}) the second sum is $r\frac{\mu}{q}\rho(\mu q \gamma)+O(q)$. The
following version of Euler's summation formula
\begin{align*}
\sum_{a\leq n<b}f(n)=\int_a^b f(x)\,\textrm{d}x-\rho(-b)f(b)+\rho(-a)f(a)-\int_a^bf'(x)\rho(x)\,\textrm{d}x
\end{align*}
is used to calculate the first sum. Its value is $|\mathcal{T}|\mu^2+\frac{a}{q}\frac\mu 2+O(1)$. Hence
\[
\mathcal{N}_{\mathcal{T}}^{u}(\mu)=|\mathcal{T}|\mu^2+\frac aq\frac \mu2+r\frac\mu q\rho(\mu q\gamma)-L_{12}+O(1)\,.
\]
Together with (\ref{label-ND2}) and (\ref{ND2-label2}) this proves (\ref{ND2}) and completes the proof of Theorem \ref{theorem:no-in-D} in the rational case.

In the irrational case we prove (\ref{ND2}) with the weaker error term $O(\mu^{2/3})$. Since points on the $y$-axis are counted with weight 1/2 an application of Euler's summation formula yields
\begin{align*}
\mathcal{N}^{u}_{\mathcal{D}_2}(\mu)
&= \sum_{0<m\leq \mu r}\left(\Big[\mu G\left(\frac{m}{\mu}\right)+c\Big]-\Big[\mu G\left(r\right)+c\Big]\right)\\
&\qquad\qquad\qquad+\frac\mu 2\big(G(0)-G(r)\big)+O(1)\\
&=|\mathcal{D}_2|\mu^2-L_{12}+\sum_{1\leq m\leq \mu r}\rho\left(\mu G\left(\frac{m}{\mu}\right)+c\right)+O(1)\,.
\end{align*}
Van der Corput's second derivative estimate \cite{corput:1923}
\begin{align}\label{corput-second-derivative-estimate}
\sum_{M_1\leq m\leq M_2 }\rho(f(m))\ll\int_{M_1}^{M_2}|f''(x)|^{1/3}\,\textrm{d}x+\max_{x\in[M_1,M_2]}|f''(x)|^{-1/2}
\end{align}
gives the bound $O(\mu^{2/3})$ for the $\rho$-sum.
\end{proof}


\begin{corollary}\label{no-in-band} Let $0<r<R$.
If the boundary curve of $\mathcal{D}$ has  a tangent in $J$ with rational slope then  $\mathcal{E}(\mu)$ defined in \eqref{error-in-band} satisfies
\begin{equation}\label{assertion-no-in-band}
\mathcal{E}(\mu)=O\left(\mu^{\theta}(\log \mu)^\Theta\right)
\end{equation}
with $\theta$ and $\Theta$ as in \eqref{definition-theta}. In case of an irrational slope the weaker bound $\mathcal{E}(\mu)=O\left(\mu^{2/3} \right)$ is true.
\end{corollary}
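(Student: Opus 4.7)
The plan is to rewrite $\mathcal{E}(\mu)$ as a combination of exactly the $\rho$-sums already analyzed in the proof of Theorem \ref{theorem:no-in-D}, so that both bounds follow by quoting the work done there. Counting lattice points in $\mathcal{B}_{r\mu}$ column by column, for each $0\leq m\leq [r\mu]$ the number of integers $n$ satisfying $\mu G(m/\mu)<n\leq \mu G(m/\mu)+1/4$ equals $[\mu G(m/\mu)+1/4]-[\mu G(m/\mu)]$. Using the identity $[x]=x+\rho(x)-1/2$ and summing,
\[
\mathcal{E}(\mu)=T_{1/4}-T_0+O(1),\qquad T_c:=\sum_{0\leq m\leq [r\mu]}\rho\bigl(\mu G(m/\mu)+c\bigr).
\]
The key observation is that $T_c$ is (up to an $O(1)$ error) precisely the $\rho$-sum that arises in the proof of Theorem \ref{theorem:no-in-D} during the analysis of $\mathcal{N}^{u}_{\mathcal{D}_2}(\mu)$ for the shifted lattice $\mathbb{Z}^2-(0,c)$: the Euler-summation step there yields, uniformly in $c\in[0,1/2)$, the unconditional identity
\[
\mathcal{N}^{u,(c)}_{\mathcal{D}_2}(\mu)=|\mathcal{D}_2|\mu^2-L_{12}(c)+T_c+O(1),
\]
with $L_{12}(c)=\mu r\,\rho(\mu G(r)+c)$.

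In the irrational case, van der Corput's estimate \eqref{corput-second-derivative-estimate} applies directly to $T_c$: since $|G''(y)|\asymp(r-y)^{-1/2}$ as $y\to r^-$ while $|G''|$ is bounded away from $0$ on $[0,r)$, writing $f(x)=\mu G(x/\mu)+c$ gives $|f''(x)|^{1/3}\asymp \mu^{-1/3}(r-x/\mu)^{-1/6}$, which is integrable and yields $\int_0^{r\mu}|f''|^{1/3}\,dx=O(\mu^{2/3})$, while $\max|f''|^{-1/2}\asymp\mu^{1/2}$. Hence $T_c=O(\mu^{2/3})$ uniformly in $c$, giving $\mathcal{E}(\mu)=O(\mu^{2/3})$. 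In the rational case, the proof of Theorem \ref{theorem:no-in-D} computes $\mathcal{N}^{u,(c)}_{\mathcal{D}_2}(\mu)$ via the decomposition $\mathcal{D}_2=\mathcal{T}\setminus\mathcal{D}_2^*$ and counts along the lines $g_t$ of rational slope $-a/q$, invoking Huxley's Proposition \ref{proposition:Huxley} to obtain $\mathcal{N}^{u,(c)}_{\mathcal{D}_2}(\mu)=|\mathcal{D}_2|\mu^2-L_{12}(c)+O(\mu^\theta(\log\mu)^\Theta)$. Combining this with the displayed identity produces $T_c=O(\mu^\theta(\log\mu)^\Theta)$, and subtracting the $c=0$ and $c=1/4$ instances gives the claimed bound on $\mathcal{E}(\mu)$.

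The main point requiring care is the uniformity in $c\in[0,1/2)$ of the rational-case estimate for $\mathcal{N}^{u,(c)}_{\mathcal{D}_2}(\mu)$. Tracing $c$ through the proof of Theorem \ref{theorem:no-in-D}, it enters only via the $O(1/\mu)$ shifts $\beta=G(0)+c/\mu$, $\gamma=G(r)+(a/q)r+c/\mu$, and the phase $c_0/\mu$ inside the argument of the inverse function $T$ appearing in the Huxley setup. These perturbations do not affect the hypotheses $|F^{(j)}|\asymp 1$ on $[1,2]$ nor the constants in Proposition \ref{proposition:Huxley}, so every implicit constant is independent of $c$, and the proof is complete.
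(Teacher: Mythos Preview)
Your argument is correct and follows essentially the same strategy as the paper---reducing $\mathcal{E}(\mu)$ to the $\rho$-sums already controlled in the proof of Theorem~\ref{theorem:no-in-D}---but the packaging is somewhat different. The paper introduces the modified domain $\mathcal{D}_2^+=\{(x,y):0\le x\le r,\ G(r)<y\le G(x)+1/(4\mu)\}$, observes that \eqref{assertion-no-in-band} is equivalent to the asymptotic $\mathcal{N}_{\mathcal{D}_2^+}^{u}(\mu)=|\mathcal{D}_2^+|\mu^2-L_{12}+O(\mu^\theta(\log\mu)^\Theta)$ for the unshifted lattice, and then says one should \emph{repeat} the proof of \eqref{ND2} for this slightly perturbed region. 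Your route is more economical: by writing $\mathcal{E}(\mu)=T_{1/4}-T_0+O(1)$ and recognising each $T_c$ as exactly the $\rho$-sum appearing in the derivation of $\mathcal{N}^{u,(c)}_{\mathcal{D}_2}(\mu)$, you read off the bound $T_c=O(\mu^\theta(\log\mu)^\Theta)$ directly from \eqref{ND2} itself (combined with the Euler-summation identity), without re-running the Huxley machinery on a new domain. The price you pay is the need to check uniformity in $c$, which you do correctly; the gain is that no part of the Huxley argument has to be redone. Your van der Corput estimate in the irrational case is also fine, and matches the paper's treatment verbatim.
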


\begin{proof}
In $\mathcal{E}(\mu)=\# \left(\mathcal{B}_{r\mu}\cap\mathbb{Z}^2 \right)-\frac{1}{4}r\mu$ we count unshifted lattice points. The number of unshifted lattice points in $\mu\mathcal D_2$ is given by (\ref{ND2}) with $c=0$. Thus in the rational case (\ref{assertion-no-in-band}) is equivalent to
\begin{align}\label{ND2+}
\mathcal{N}_{\mathcal{D}_2^+}^{u}(\mu)=|\mathcal{D}_2^+|\mu^2-L_{12}+O(\mu^\theta(\log \mu)^\Theta)\,,
\end{align}
where $\mathcal{N}_{\mathcal{D}_2^+}^{u}(\mu)$ denotes the number of unshifted lattice points in $\mu\mathcal{D}_2^+$ with
\[\mathcal{D}_2^+:=\left\{(x,y)\in\mathbb{R}^2: 0\leq x\leq r,\ G(r)<y\leq  G(x)+1/(4\mu)\right\}.\]
Repeating the proof of (\ref{ND2}) with this slightly modified domain one obtains (\ref{ND2+}). The  case of irrational slope is even easier.
\end{proof}

\begin{lemma}\label{Lemma-function-H} Let $0\leq r<R$. The inverse function $H:[0,G(r)]\to [r,R]$ of $G$ restricted to $[r,R]$ satisfies for j=1,2,3
\[H^{(j)}(y)\asymp y^{\frac23-j}\qquad(y\in (0,G(r)])\,.\]
\end{lemma}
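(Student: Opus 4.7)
The plan is to parametrize the relevant portion of the graph of $G$ by the angle $\theta=\arccos(x/R)\in[0,\arccos(r/R)]$, which linearizes the singular inverse function at $y=0$. Using the defining formula of $g$ in \eqref{def-g}, one verifies directly that
\[
G(R\cos\theta)=\frac{R}{\pi}\bigl(\sin\theta-\theta\cos\theta\bigr),
\]
so $x=R\cos\theta$ and $y=R(\sin\theta-\theta\cos\theta)/\pi$. Differentiating gives $dx/d\theta=-R\sin\theta$ and $dy/d\theta=R\theta\sin\theta/\pi$, hence
\[
H'(y)=\frac{dx}{dy}=-\frac{\pi}{\theta},\qquad \frac{d\theta}{dy}=\frac{\pi}{R\theta\sin\theta}.
\]

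The crucial quantitative input is the uniform equivalence $y\asymp\theta^{3}$ on the compact interval $\theta\in[0,\arccos(r/R)]$. This follows from Taylor's theorem applied to $f(\theta):=\sin\theta-\theta\cos\theta$, for which $f(0)=f'(0)=f''(0)=0$ and $f'''(0)=2$, combined with $f'(\theta)=\theta\sin\theta>0$ on $(0,\pi)$, so that $f(\theta)/\theta^{3}$ extends continuously and strictly positively to the whole interval. Granted this, the case $j=1$ is immediate: $|H'(y)|=\pi/\theta\asymp y^{-1/3}=y^{2/3-1}$.

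For $j=2,3$ I will iterate the chain rule. A direct computation starting from $H'(y)=-\pi/\theta$ yields
\[
H''(y)=\frac{\pi^{2}}{R\,\theta^{3}\sin\theta},\qquad H'''(y)=-\frac{\pi^{3}\,(3\sin\theta+\theta\cos\theta)}{R^{2}\,\theta^{5}\sin^{3}\theta}.
\]
On the interval $[0,\arccos(r/R)]$ the factors $\sin\theta/\theta$ and $(3\sin\theta+\theta\cos\theta)/\theta$ are continuous, bounded above, and bounded below by strictly positive constants (their limits at $\theta=0$ are $1$ and $4$), so only the explicit powers of $\theta$ contribute to the two-sided size estimate. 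This gives $|H''(y)|\asymp\theta^{-4}\asymp y^{-4/3}$ and $|H'''(y)|\asymp\theta^{-7}\asymp y^{-7/3}$, matching $y^{2/3-j}$ for $j=2,3$ respectively.

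I do not anticipate a serious obstacle: the substitution $\theta=\arccos(x/R)$ is perfectly adapted to $G$, and bounding each $H^{(j)}$ reduces to counting powers of $\theta$. The only small care needed is to confirm uniformity up to the non-singular endpoint $\theta=\arccos(r/R)$, where $H$ is smooth and $H'(G(r))$ is finite and non-zero; there both $y$ and $\theta$ stay in a compact subset of $(0,\infty)$, so the desired two-sided bounds follow automatically with constants depending only on $r/R$.
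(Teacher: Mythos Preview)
Your proof is correct. The parametrization $\theta=\arccos(x/R)$ is a genuinely different route from the paper's. The paper works directly in the variable $x$: it computes $G'(x)\asymp(R-x)^{1/2}$, $G''(x)\asymp(R-x)^{-1/2}$, and the combination $3G''(x)^2-G'(x)G'''(x)\asymp(R-x)^{-1}$, establishes $G(x)\asymp(R-x)^{3/2}$, and then feeds these into the inverse-function formulas $H'=1/G'$, $H''=-G''/(G')^3$, $H'''=(3(G'')^2-G'G''')/(G')^5$. Your approach trades this for the single observation $y\asymp\theta^{3}$ together with the closed form $H'(y)=-\pi/\theta$, which makes the higher derivatives a routine chain-rule exercise and the power counting transparent. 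The paper's method is slightly more generic (it only uses the local expansion of $G$ near $x=R$ and would transfer verbatim to other functions with the same $(R-x)^{3/2}$ vanishing), whereas your substitution exploits the explicit trigonometric form of $g$ and is cleaner for this specific $G$. Both handle the non-singular endpoint $y=G(r)$ by compactness, and both rely on the same underlying scaling $\theta\asymp(R-x)^{1/2}\asymp y^{1/3}$.
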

\begin{proof} For $0\leq r\leq x\leq R$ the function $G(x)=Rg(x/R)$ satisfies
\begin{align*}
G'(x)&=-\pi^{-1}\arccos(x/R)\asymp(R-x)^{1/2}\,,\\
G''(x)&=(\pi R)^{-1}\left(1-\left(x/R\right)^2\right)^{-1/2}\asymp (R-x)^{-1/2}\,,\\
G'''(x)&=(\pi R^3)^{-1}x\left(1-(x/R)^2\right)^{-3/2}\asymp x(R-x)^{-3/2}\,
\end{align*}
and, with the positive and bounded function $h(x)=x(1-x^2)^{-1/2}\arccos(x)$,
\[3G''(x)^2-G'(x)G'''(x)=(\pi R)^{-2}\textstyle{\left(1-\left(\frac{x}{R}\right)^2\right)^{-1}}\left(3+h\textstyle{\left(\frac xR\right)}\right)\asymp(R-x)^{-1}\,.\]
Furthermore  $f(x):=G(x)(R-x)^{-3/2}$ is positive with
\[\lim_{x\to R}f(x)=R^{-5/2}2^{3/2}(3\pi)^{-1}>0\,.\]
This proves \[G(x)\asymp(R-x)^{3/2}\,.\]
Set $y=G(x)$. Then $R-x\asymp y^{2/3}$. For the inverse function $H$ one obtains
\begin{align*}
H'(y)&=\left(G'(x)\right)^{-1}\asymp(R-x)^{-1/2}\asymp y^{-1/3}\,,\\
H''(y)&=-\left(G'(x)\right)^{-3}G''(x)\asymp(R-x)^{-2}\asymp y^{-4/3}\,,\\
H'''(y)&=\left(G'(x)\right)^{-5}\left(3G''(x)^2-G'''(x)G'(x)\right)\asymp(R-x)^{-7/2}\asymp y^{-7/3}\,.
\end{align*}
\end{proof}

\begin{lemma}\label{Lemma-function-T} Let $0<r<R$. The function $T:[\beta,\gamma]\to[0,r]$ defined in (\ref{definition-T}) satisfies for j=1,2,3
\[T^{(j)}(y)\asymp (\gamma-y)^{\frac23-j}\qquad(y\in[\beta,\gamma)).\]
\end{lemma}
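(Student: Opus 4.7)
The plan is to view $T$ as the inverse of $\phi(x):=G(x)+(a/q)x+c/\mu$ on $[0,r]$, and to transfer asymptotic information from the behavior of $\phi$ near its critical point at $x=r$ to the corresponding behavior of $T$ near $y=\gamma$. That $\phi'(r)=0$ is exactly the tangency condition $G'(r)=-a/q$ coming from the choice of $a,q$, so $\gamma=\phi(r)$ is a critical value, and all three claimed asymptotics will follow once the order of vanishing of $\phi'$ at $r$ is pinned down.

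I would start by computing $\phi''$ and $\phi'''$ explicitly. On $[0,r]$ one has $G(x)=Rg(x/R)-rg(x/r)$, hence
\[
\phi''(x)=G''(x)=(\pi R)^{-1}\bigl(1-(x/R)^2\bigr)^{-1/2}-(\pi r)^{-1}\bigl(1-(x/r)^2\bigr)^{-1/2},
\]
together with an analogous formula for $\phi'''(x)$. The second term of $\phi''$ is singular at $x=r$ while the first is bounded there; writing $1-(x/r)^2=(r-x)(r+x)/r^2$ shows $-\phi''(x)\asymp(r-x)^{-1/2}$ uniformly on $[0,r)$, with the corresponding bound $|\phi'''(x)|\ll(r-x)^{-3/2}$. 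Integrating twice from $x$ up to $r$, using $\phi'(r)=0$ and $\phi(r)=\gamma$, then yields $\phi'(x)\asymp(r-x)^{1/2}$ and $\gamma-\phi(x)\asymp(r-x)^{3/2}$. Inverting the latter relation gives $r-T(y)\asymp(\gamma-y)^{2/3}$.

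Finally I would substitute into the standard inverse-function identities
\[
T'=\frac{1}{\phi'},\qquad T''=-\frac{\phi''}{(\phi')^3},\qquad T'''=\frac{3(\phi'')^2}{(\phi')^5}-\frac{\phi'''}{(\phi')^4}
\]
and read off the orders from $\phi'\asymp(r-x)^{1/2}$, $\phi''\asymp -(r-x)^{-1/2}$ and $\phi'''=O((r-x)^{-3/2})$, combined with $r-x\asymp(\gamma-y)^{2/3}$. The main obstacle is the lower bound for $|T'''|$: since $\phi'''(0)=0$, the $\phi'''$ term in the formula for $T'''$ does not by itself furnish a lower bound of order $(r-x)^{-7/2}$. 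The resolution is to observe that on $[0,r)$ one has $\phi'>0$, $\phi''<0$, and $\phi'''\le 0$, so the two contributions to $T'''$ are both nonnegative and cannot cancel; hence the $(\phi'')^2$ term alone supplies $T'''\gg(r-x)^{-7/2}\asymp(\gamma-y)^{-7/3}$, and the remaining $\asymp$'s then follow by routine bookkeeping.
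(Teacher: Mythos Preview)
Your proposal is correct and follows essentially the same route as the paper: both view $T$ as the inverse of $x\mapsto G(x)-G'(r)x+\text{const}$, establish $\phi''\asymp -(r-x)^{-1/2}$ from the explicit formula, integrate to get $\phi'\asymp(r-x)^{1/2}$ and $\gamma-\phi(x)\asymp(r-x)^{3/2}$, and then read off the derivatives of $T$ from the inverse-function identities. The one point requiring care---the lower bound for $T'''$---is handled the same way in spirit: the paper factors $T'''=(\phi')^{-5}(\phi'')^2(3+F)$ with $F=-\phi'''\phi'/(\phi'')^2$ nonnegative and bounded, which is exactly your observation that the two contributions have the same sign and cannot cancel.
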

\begin{proof} On $[0,r]$ the function $G$ is defined by $G(x)=Rg(x/R)-rg(x/r)$. Thus
\begin{align*}
G''(x)&=\textstyle{\frac1{\pi R}\left(1-\left(\frac xR\right)^2\right)^{-1/2}-\frac1{\pi r}\left(1-\left(\frac xr\right)^2\right)^{-1/2}}\asymp (r-x)^{-1/2}\,,\\
G'''(x)&=\textstyle{\frac x\pi\left(\frac1{R^3}\left(1-\left(\frac xR\right)^2\right)^{-3/2}-\frac1{r^3}\left(1-\left(\frac xr\right)^2\right)^{-3/2}\right)}\asymp x(r-x)^{-3/2}\,
\end{align*}
and
\[G'(x)-G'(r)=-\int_x^rG''(u)\,\textrm{d}u\asymp (r-x)^{1/2}\,.\]
Since $G''(x)<0$ the function
\[f(x):=\left(G(x)-G(r)-G'(r)(x-r)\right)(r-x)^{-3/2}\]
is strictly negative with $\lim_{x\uparrow r}f(x)=-2^{3/2}/(3\sqrt{r})>-\infty$. This proves
\begin{align}\label{label-asymptotic-G}
G(x)-G(r)-G'(r)(x-r)\asymp (r-x)^{3/2}\,.
\end{align}
Set $x=T(y)$. Subtracting (\ref{definition-T}) from the equation defining $\gamma$ one obtains
\[\gamma-y=G(r)-G(x)-G'(r)(r-x)\,.\]
With (\ref{label-asymptotic-G}) this yields $\gamma-y\asymp(r-x)^{3/2}$ and
\begin{align}\label{label-T}
r-T(y)=r-x\asymp(\gamma-y)^{2/3}\,.
\end{align} Differentiating  (\ref{definition-T}) one obtains
\begin{align*}
T'(y)&=\left(G'(x)-G'(r)\right)^{-1}\asymp (r-x)^{-1/2}\asymp(\gamma-y)^{-1/3},\\
T''(y)&=-\left(G'(x)-G'(r)\right)^{-3}G''(x)\asymp(r-x)^{-2}\asymp(\gamma-y)^{-4/3},\\
T'''(y)&=\left(G'(x)-G'(r)\right)^{-5}(G''(x))^2\big(3+F(x)\big)
\asymp(\gamma-y)^{-7/3}.
\end{align*}
Here $F(x)=-G'''(x)(G'(x)-G'(r))(G''(x))^{-2}$ is a positive bounded function.
\end{proof}



\appendix

\section{Some asymptotics}


For any $c>0$ and $n\in \mathbb{N}\cup\{0\}$, if $z\geq \max\{(1+c)n, 1\}$ the Bessel functions have the asymptotics
\begin{equation}
J_n(z)=\left(2/\pi\right)^{1/2}\left(z^2-n^2\right)^{-1/4} \left(\cos\left( \pi
zg\left(\frac{n}{z}\right)-\frac{\pi}{4}\right)+O_c\left(z^{-1}\right)\right)    \label{jnasy}
\end{equation}
and
\begin{equation}
Y_n(z)=\left(2/\pi\right)^{1/2}\left(z^2-n^2\right)^{-1/4} \left(\sin\left( \pi
zg\left(\frac{n}{z}\right)-\frac{\pi}{4}\right)+O_c\left(z^{-1}\right)\right),   \label{ynasy}
\end{equation}
where $g$ is defined by \eqref{def-g}.

We sketch the proof. Recall the integral representations \cite[p. 360]{abram:1972}
\[J_n(z)=\operatorname{Re}(I_n(z))\,,\qquad Y_n(z)=\operatorname{Im}(I_n(z))-L_n(z)\,,\]
where
\begin{align*}
L_n(z):=\frac1\pi\int_0^\infty\left(e^{nt}+e^{-nt}\cos(n\pi)\right)e^{-z\sinh t}  \,\textrm{d}t
\end{align*}
and
\begin{equation*}
I_n(z):=\frac{1}{\pi}\int_0^\pi e^{i z \phi(\theta)} \,\textrm{d}\theta
\end{equation*}
with
\begin{equation*}
\phi(\theta)=\phi(z,n,\theta)=\sin\theta-\frac{n}{z}\theta.
\end{equation*}

 We first study the integral $I_n(z)$. The phase function $\phi$ has only one critical point  $\beta:=\arccos(n/z)$ in $[0, \pi]$. Applying the method of stationary phase in a sufficiently small neighborhood of $\beta$ yields the contribution
\begin{equation}
\left(2/\pi\right)^{1/2}|\phi''(\beta)|^{-1/2} e^{i(z\phi(\beta)-\pi/4)}z^{-1/2}+O_c(z^{-3/2}).\label{appendix222}
\end{equation}
To study the contribution of the domain away from $\beta$ we use integration by parts twice. The real part contributes at most $O_c(z^{-2})$ while the imaginary part is equal to
\begin{equation}
\frac{1}{\pi}\left(\frac{1}{z-n}+\frac{\cos(n\pi)}{z+n}\right)+O_c(z^{-2}).\label{appendix333}
\end{equation}

We then immediately get \eqref{jnasy} by taking the real part of $I_n(z)$.

It remains to study $L_n(z)$. If $n\leq 2/c$, by using a substitution $y=\sinh t$ and integration by parts twice, we get
\begin{equation*}
L_n(z)=\frac{1+\cos(n\pi)}{\pi z}+O_c(z^{-2}).
\end{equation*}

If $n> 2/c$ then
\begin{equation}
\int_0^\infty e^{nt}e^{-z\sinh t}  \,\textrm{d}t=\frac{1}{z-n}+O_c(z^{-3}).\label{appendix111}
\end{equation}
Indeed, by changing variables $s=(z-n)t$ we have
\begin{equation*}
\int_0^\infty e^{nt}e^{-z\sinh t}  \,\textrm{d}t=\frac{1}{z-n}\int_0^\infty e^{-s}e^{-z\sigma\left(\frac{s}{z-n} \right)}  \,\textrm{d}s,
\end{equation*}
where
\begin{equation*}
\sigma(t)=\sum_{k=1}^{\infty} \frac{t^{2k+1}}{(2k+1)!}.
\end{equation*}
Therefore, by the mean value theorem,
\begin{equation*}
\left|\int_0^\infty e^{nt}e^{-z\sinh t}  \,\textrm{d}t-\frac{1}{z-n}\right|\leq \frac{z}{z-n} \int_0^\infty e^{-s}\sigma\left(\frac{s}{z-n} \right)  \,\textrm{d}s.
\end{equation*}
After using the Gamma function to simplify the right hand side, we get a convergent geometric series which is $\ll z^{-3}$. This proves \eqref{appendix111}.

Repeating the above argument for \eqref{appendix111} (even for small $n$) yields
\begin{equation*}
\int_0^\infty e^{-nt}\cos(n\pi)e^{-z\sinh t}  \,\textrm{d}t=\frac{\cos(n\pi)}{z+n}+O(z^{-3}).
\end{equation*}

Finally, combining \eqref{appendix222}, \eqref{appendix333}, and the above asymptotics of $L_n(z)$ leads to  \eqref{ynasy} readily. This finishes the proof.



Furthermore, we use Olver's uniform asymptotic expansions of Bessel functions of large order (see \cite[p. 368]{abram:1972} or \cite{olver:1954}):
\begin{equation}
J_n(nz)\sim \left(\frac{4\zeta}{1-z^2}\right)^{1/4}\left(\frac{\mathrm{Ai}(n^{2/3}\zeta)}{n^{1/3}}
\sum_{k=0}^{\infty}\frac{a_k(\zeta)}{n^{2k}}+\frac{\mathrm{Ai}^{\prime}(n^{2/3}\zeta)}{n^{5/3}}
\sum_{k=0}^{\infty}\frac{b_k(\zeta)}{n^{2k}}\right)\label{jnuse111}
\end{equation}
and
\begin{equation}
Y_n(nz) \sim -\left(\frac{4\zeta}{1-z^2}\right)^{1/4}\left(\frac{\mathrm{Bi}(n^{2/3}\zeta)}
{n^{1/3}}\sum_{k=0}^{\infty}\frac{a_k(\zeta)}{n^{2k}}+\frac{\mathrm{Bi}^{\prime}(n^{2/3}\zeta)}{n^{5/3}}
\sum_{k=0}^{\infty}\frac{b_k(\zeta)}{n^{2k}}\right),\label{ynuse111}
\end{equation}
where $\zeta=\zeta(z)$ is given by
\begin{equation}
\frac{2}{3}(-\zeta)^{3/2}=\int_{1}^z\frac{\sqrt{t^2-1}}{t}\,\mathrm{d}t=
\sqrt{z^2-1}-\arccos\left(\frac{1}{z}\right)\label{def-zeta1}
\end{equation}or
\begin{equation}
\frac{2}{3}\zeta^{3/2}=\int_{z}^1\frac{\sqrt{1-t^2}}{t}\,\mathrm{d}t=\ln \frac{1+\sqrt{1-z^2}}{z}-\sqrt{1-z^2}.\label{def-zeta2}
\end{equation}
Here the branches are chosen so that $\zeta$ is real when $z$ is positive. $\mathrm{Ai}$ and $\mathrm{Bi}$ denotes the Airy functions of first and second kind. For the definitions and sizes of the coefficients $a_k(\zeta)$'s and $b_k(\zeta)$'s see \cite[p. 368--369]{abram:1972}, especially $a_0(\zeta)=1$.
It is easy to check the following expansions of \eqref{def-zeta1} and \eqref{def-zeta2}.
If $z\rightarrow 1+$ then
\begin{equation}\label{bound-zeta+}
(-\zeta(z))^{3/2}=\sqrt{2}(z-1)^{3/2}+\frac{11\sqrt{2}}{4}(z-1)^{5/2}+O\left((z-1)^{7/2}\right).
\end{equation}
If $z\rightarrow 1-$ then
\begin{equation}\label{bound-zeta-}
(\zeta(z))^{3/2}=\sqrt{2}(1-z)^{3/2}+\frac{9\sqrt{2}}{20}(1-z)^{5/2}+O\left((1-z)^{7/2}\right).
\end{equation}

As a consequence of Olver's asymptotics we obtain the following analogue of the 9.3.4 in \cite[p. 366]{abram:1972}.
\begin{lemma}\label{9.3.4analogue}
For any $\epsilon>0$ and all sufficiently large $n$,
\begin{enumerate}
\item if $0\leq w\leq n^{\epsilon}$ then
\begin{align*}
J_n\left(n+wn^{1/3}\right)&=2^{1/3}n^{-1/3}\mathrm{Ai}\left(-2^{1/3}w\right)+O\left(n^{-1+2.25\epsilon}\right),\\
Y_n\left(n+wn^{1/3}\right)&=-2^{1/3}n^{-1/3}\mathrm{Bi}\left(-2^{1/3}w\right)+O\left(n^{-1+2.25\epsilon}\right);
\end{align*}

\item if $-n^{\epsilon}\leq w\leq 0$ then
\begin{align*}
J_n\left(n+wn^{1/3}\right)&=2^{1/3}n^{-1/3}\mathrm{Ai}\left(-2^{1/3}w\right)\left(1+O\left(n^{-2/3+2.5\epsilon}\right)\right),\\
Y_n\left(n+wn^{1/3}\right)&=-2^{1/3}n^{-1/3}\mathrm{Bi}\left(-2^{1/3}w\right)\left(1+O\left(n^{-2/3+2.5\epsilon}\right)\right).
\end{align*}
\end{enumerate}
\end{lemma}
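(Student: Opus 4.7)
The plan is to derive both identities directly from Olver's uniform asymptotic expansions \eqref{jnuse111} and \eqref{ynuse111} by setting $z = 1 + wn^{-2/3}$ (so that $nz = n + wn^{1/3}$) and then expanding each ingredient --- the prefactor, the Airy argument, and finally the Airy functions themselves --- to the accuracy demanded by the stated error.

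First I would analyze $\zeta = \zeta(z)$. For $0 \leq w \leq n^{\epsilon}$ we have $z \geq 1$ and $\zeta < 0$; substituting $z - 1 = wn^{-2/3}$ into \eqref{bound-zeta+} and taking the $2/3$ root yields
\[
n^{2/3}\zeta = -2^{1/3}w + O\!\left(w^{2}n^{-2/3}\right).
\]
For $-n^{\epsilon} \leq w \leq 0$, applying \eqref{bound-zeta-} gives the same expansion (now with $\zeta \geq 0$). Writing $1 - z^{2} = -2(z-1)(1 + (z-1)/2)$ and combining with these expansions, a short calculation shows
\[
\left(\frac{4\zeta}{1-z^{2}}\right)^{1/4} = 2^{1/3}\!\left(1 + O(wn^{-2/3})\right).
\]

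Next I would replace $\mathrm{Ai}(n^{2/3}\zeta)$ by $\mathrm{Ai}(-2^{1/3}w)$, and $\mathrm{Bi}$ analogously. In Case 1 ($w \geq 0$) the argument $-2^{1/3}w$ is nonpositive, where the standard bounds $|\mathrm{Ai}(-r)|,|\mathrm{Bi}(-r)| = O(r^{-1/4})$ and $|\mathrm{Ai}'(-r)|,|\mathrm{Bi}'(-r)| = O(r^{1/4})$ from \cite[p.~448--449]{abram:1972} apply; the mean value theorem together with the shift $O(w^{2}n^{-2/3})$ in the argument produces an additive error of size $O(w^{9/4}n^{-2/3})$. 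Combined with the $2^{1/3}n^{-1/3}$ prefactor and the contribution of the subleading $\mathrm{Ai}'/n^{5/3}$ (resp.\ $\mathrm{Bi}'/n^{5/3}$) term, this gives the additive bound $O(n^{-1+2.25\epsilon})$. In Case 2 ($w \leq 0$), the argument is positive, where Ai decays exponentially and Bi grows exponentially; here I would instead control the \emph{ratio} $\mathrm{Ai}(r+\delta)/\mathrm{Ai}(r)$ (and similarly for $\mathrm{Bi}$) by expanding $\tfrac{2}{3}(r+\delta)^{3/2}$ in $\delta$, which gives a multiplicative factor $1 + O(|w|^{1/2}\cdot w^{2}n^{-2/3}) = 1 + O(n^{2.5\epsilon-2/3})$; this is precisely why the error in Case 2 is multiplicative rather than additive.

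Finally I would verify that the $\mathrm{Ai}'$/$\mathrm{Bi}'$ corrections in Olver's series and the $O(n^{-2})$ tails are absorbed into the advertised errors, using that $\mathrm{Ai}'/\mathrm{Ai}$ and $\mathrm{Bi}'/\mathrm{Bi}$ are of size $O((1+|w|)^{1/2})$ at these arguments so that after multiplication by $n^{-4/3}$ they sit well below the main error. The principal technical nuisance will be bookkeeping: threading the additive shift in the Airy argument through two qualitatively different regimes (oscillatory vs.\ exponential), while keeping the estimates uniform across the whole range $|w| \leq n^{\epsilon}$ and confirming that the Case 1 and Case 2 errors collapse respectively to $O(n^{-1+2.25\epsilon})$ and $O(n^{-2/3+2.5\epsilon})$ rather than to something mildly worse.
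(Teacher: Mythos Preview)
Your approach is essentially the same as the paper's: both feed $z=1+wn^{-2/3}$ into Olver's expansions \eqref{jnuse111}--\eqref{ynuse111}, expand $n^{2/3}\zeta$ via \eqref{bound-zeta+}/\eqref{bound-zeta-}, compute the prefactor as $2^{1/3}(1+O(|w|n^{-2/3}))$, and then treat the oscillatory ($w\ge 0$) and exponential ($w\le 0$) regimes separately to produce additive and multiplicative errors respectively. The only point the paper makes explicit that you leave implicit is the range $|w|\le C$: the large-argument asymptotics you invoke for $\mathrm{Ai}$, $\mathrm{Bi}$ and their ratios are not valid near the origin, so the paper splits this off and quotes the classical transition-region formula 9.3.4 in \cite{abram:1972}; you should either do the same or note that on any compact set $\mathrm{Ai}$, $\mathrm{Bi}$ are smooth and (on $[0,\infty)$) nonvanishing, which makes your mean-value and ratio arguments uniform there as well.
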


\begin{proof}
We may assume that $|w|\geq C$ for a large constant $C$, otherwise all desired formulas follow from  9.3.4 in \cite[p. 366]{abram:1972} since $\mathrm{Ai}(r)\asymp_C 1$ and $\mathrm{Bi}(r)\asymp_C 1$ if $0\leq r\leq C$.

Let us consider the case $-n^{\epsilon}\leq w\leq -C$. Set $z=1+wn^{-2/3}$. By \eqref{jnuse111} we have
\begin{equation*}
\begin{split}
J_n&\left(n+wn^{1/3}\right)=J_n(nz)= \\
                &\left(\frac{4\zeta}{1-z^2}\right)^{1/4}\frac{1}{n^{1/3}}\left(\mathrm{Ai}\left(n^{2/3}\zeta\right)
\left(1+O\left(n^{-2}\right) \right)+n^{-4/3}\mathrm{Ai}^{\prime}\left(n^{2/3}\zeta\right)O(1)\right),
\end{split}
\end{equation*}
where $\zeta=\zeta(z)$, determined by \eqref{def-zeta2}, is positive and satisfies by \eqref{bound-zeta-}
\begin{equation*}
n^{2/3}\zeta=-2^{1/3}w+\frac{3}{10}2^{1/3}w^2 n^{-2/3}\left(1+O\left(|w|n^{-2/3}\right)\right).
\end{equation*}
Thus
\begin{equation}
\left(\frac{4\zeta}{1-z^2}\right)^{1/4}=2^{1/3}\left(1+O\left(|w|n^{-2/3}\right) \right).\label{firstfactor}
\end{equation}
Since it is known (\cite[p. 448]{abram:1972}) that for large $r$
\begin{equation*}
\mathrm{Ai}(r)=\frac{1}{2}\pi^{-1/2}r^{-1/4}e^{-\frac{2}{3}r^{3/2}}\left(1+O\left(r^{-3/2}\right)\right)
\end{equation*}
and
\begin{equation*}
\mathrm{Ai}'(r)=-\frac{1}{2}\pi^{-1/2}r^{1/4}e^{-\frac{2}{3}r^{3/2}}\left(1+O\left(r^{-3/2}\right)\right)
\end{equation*}
we  have
\begin{equation*}
\left|\mathrm{Ai}'\left(n^{2/3}\zeta\right)\right|\ll \left(-2^{1/3}w\right)^{1/4}e^{-\frac{2}{3}\left(-2^{1/3}w\right)^{3/2}}\asymp \left(-2^{1/3}w\right)^{1/2}\mathrm{Ai}\left(-2^{1/3}w\right)
\end{equation*}
and, by the mean value theorem,
\begin{equation*}
\mathrm{Ai}\left(n^{2/3}\zeta\right)=\mathrm{Ai}\left(-2^{1/3}w\right)+
O\left(\mathrm{Ai}\left(-2^{1/3}w\right)|w|^{2.5}n^{-2/3} \right).
\end{equation*}
Collecting the above three estimates and plugging them into the above formula of $J_n\left(n+wn^{1/3}\right)$ gives the desired formula of $J_n$ in the case  $-n^{\epsilon}\leq w\leq -C$. Almost the same argument gives the formula of $Y_n$.

In the case $C\leq w\leq n^{\epsilon}$  we again use the asymptotics \eqref{jnuse111} and
\eqref{ynuse111} with $z=1+wn^{-2/3}$. The corresponding $\zeta=\zeta(z)$, determined by \eqref{def-zeta1}, is negative and satisfies by \eqref{bound-zeta+}
\begin{equation*}
-n^{2/3}\zeta=2^{1/3}w+\frac{11}{6}2^{1/3}w^2 n^{-2/3}\left(1+O\left(wn^{-2/3}\right)\right),
\end{equation*}
which leads to \eqref{firstfactor}. Since $\mathrm{Ai}(-r)=O(r^{-1/4})$ and $\mathrm{Ai}'(-r)=O(r^{1/4})$ (see \cite[p. 448--449]{abram:1972}), we get
\begin{equation*}
\mathrm{Ai}\left(n^{2/3}\zeta\right)\ll w^{-1/4}, \quad \quad \mathrm{Ai}'\left(n^{2/3}\zeta\right)\ll w^{1/4}
\end{equation*}
and
\begin{equation*}
\mathrm{Ai}\left(n^{2/3}\zeta\right)=\mathrm{Ai}\left(-2^{1/3}w\right)+O\left(w^{2.25}n^{-2/3} \right).
\end{equation*}
Collecting these estimates and plugging them into \eqref{jnuse111} gives the desired formula of $J_n$ in the case  $C\leq w\leq n^{\epsilon}$. A similar argument gives the formula of $Y_n$.
\end{proof}

Finally, we collect two well-known asymptotic formulas for the Airy functions (see for example \cite[p. 448--449]{abram:1972}). For $r>0$
\begin{equation*}
\mathrm{Ai}(-r)=\pi^{-1/2}r^{-1/4}\left(\sin\left(\frac{2}{3}r^{3/2}+\frac{\pi}{4}\right)+O\left(r^{-3/2}\right)\right)
\end{equation*}
and
\begin{equation*}
\mathrm{Bi}(-r)=\pi^{-1/2}r^{-1/4}\left(\cos\left(\frac{2}{3}r^{3/2}+\frac{\pi}{4}\right)+O\left(r^{-3/2}\right)\right).
\end{equation*}

%


\begin{thebibliography}{00}

\bibitem{abram:1972}  
Abramowitz, M. and Stegun, I. A., \textit{Handbook of mathematical functions with formulas, graphs, and mathematical tables}, National Bureau of Standards Applied Mathematics Series, 55, For sale by the Superintendent of Documents, U.S. Government Printing Office, Washington, D.C., 1964.


\bibitem{bobkov:2018}   
Bobkov, V., \textit{Asymptotic relation for zeros of cross-product of Bessel functions and applications}, J. Math. Anal. Appl. 472, 1078--1092, 2019.

\bibitem{BWpreprint}   
Bourgain, J. and Watt, N., \textit{Mean square of zeta function, circle problem and divisor problem revisited},
arXiv:1709.04340.

\bibitem{cochran:1964}  
Cochran, J. A., \textit{Remarks on the zeros of cross-product Bessel functions}, J. Soc. Indust. Appl. Math. 12, 580--587, 1964.

\bibitem{Cochran:1966}  
Cochran, J. A., \textit{The analyticity of cross-product Bessel function zeros}, Proc. Cambridge Philos. Soc. 62, 215--226, 1966.



\bibitem{colin:2011}   
Colin de Verdi\`ere, Y., \textit{On the remainder in the Weyl formula for the Euclidean disk}, S\'eminaire de th\'eorie spectrale et g\'eom\'etrie 29, 1–-13, 2010--2011.

\bibitem{DuG:1975}  
Duistermaat, J. and Guillemin, V.,\textit{The spectrum of positive elliptic operators and periodic bicharacteristics}, Invent. Math. 29, 39--79, 1975.


\bibitem{GWW2018}  
Guo, J., Wang, W. and Wang, Z., \textit{An improved remainder estimate in the Weyl formula for
the planar disk}, J. Fourier Anal. Appl., to appear, available at https://doi.org/10.1007/s00041-018-9637-z.


\bibitem{Huxley:2003}  
Huxley, M. N., \textit{Exponential sums and lattice points. III},  Proc. London Math. Soc. (3) 87, 591--609, 2003.


\bibitem{Ivrii1980}    
Ivrii, V., \textit{The second term of the spectral asymptotics for a Laplace-Beltrami operator on manifolds with boundary (Russian)}, Funct. Anal. Appl. 14, 25--34, 1980.


\bibitem{Kline:1948}  
Kline, M., \textit{Some Bessel equations and their application to guide and cavity theory},
J. Math. Physics 27, 37--48, 1948.

\bibitem{kuz:1965} 
Kuznetsov, N. V. and  Fedosov, B. V.,   \textit{An asymptotic formula for eigenvalues of a circular membrane}, Differ. Uravn. 1, 1682--1685, 1965.



\bibitem{Lazu:1982}   
Lazutkin, V. F. and Terman, D. Ya., \textit{On the estimate of the remainder term in a formula of H. Weyl (Russian)},  Funct. Anal. Appl. 15, 299--300, 1982.


\bibitem{McCann:1977} 
McCann, R. C., \textit{Lower bounds for the zeros of Bessel functions}, Proc. Amer. Math. Soc. 64, 101--103, 1977.


\bibitem{mcmahon:1894} 
McMahon, J., \textit{On the roots of the Bessel and certain related functions}, Ann. of Math. 9, 23--30, 1894/95.


\bibitem{Mel:1980} 
Melrose, R. B., \textit{Weyl’s conjecture for manifolds with concave boundary}, Geometry of the Laplace operator (Proc. Sympos. Pure Math., Univ. Hawaii, Honolulu, Hawaii, 1979), pp. 257--274, Proc. Sympos. Pure Math. XXXVI, Amer. Math. Soc., Providence, R.I., 1980.






\bibitem{nowak:1980}  
Nowak, W. G., \textit{A nonconvex generalization of the circle problem}, J. Reine Angew. Math. 314, 136--145,  1980.


\bibitem{olver:1954}  
Olver, F. W. J., \textit{The asymptotic expansion of Bessel functions of large order},
Philos. Trans. Roy. Soc. London. Ser. A. 247, 328--368, 1954.


\bibitem{olver:1997}   
Olver, F. W. J., \textit{Asymptotics and Special Functions}, Academic Press, New York, 1974; reprinted  by A. K. Peters, Wellesley, MA, 1997.


\bibitem{corput:1923}  
Van der Corput, J. G., \textit{Zahlentheoretische Absch\"{a}tzungen mit Anwendung auf Gitterpunktprobleme (German)}, Math. Z. 17, 250--259, 1923.

\bibitem{watson:1966} 
Watson, G. N., \textit{A treatise on the theory of Bessel functions}, Reprint of the second (1944) edition, Cambridge Mathematical Library, Cambridge University Press, Cambridge, 1995.

\bibitem{weyl11:1912} 
Weyl, H., \textit{Das asymptotische Verteilungsgesetz der Eigenwerte linearer partieller Differentialgleichungen (mit einer Anwendung auf die Theorie der Hohlraumstrahlung). (German)},
Math. Ann 71, 441--479, 1912.



\bibitem{weyl13:1913}  
Weyl, H., \textit{\"{U}ber die Randwertaufgabe der Strahlungstheorie und asymptotische Spektralgeometrie. (German)}, J. Reine Angew. Math 143, 177--202, 1913.

\bibitem{willis:1965}   
Willis, D. M., \textit{A property of the zeros of a cross-product of Bessel functions}, Proc. Cambridge Philos. Soc. 61, 425--428, 1965.


\end{thebibliography}
\end{document}